\documentclass[12pt]{amsart}

\usepackage[T1]{fontenc}
\usepackage[english]{babel}

\usepackage{fourier}

\usepackage{amssymb,latexsym, amscd, a4wide}
\usepackage[all]{xy}
\usepackage{graphicx}
\usepackage{mathrsfs}
\usepackage{amsmath}
\usepackage{pb-diagram}
\usepackage{bbold}
\usepackage{color}
\usepackage[dvipsnames]{xcolor}

\usepackage{setspace}

\newtheorem{thm}{Theorem}[section]
\newtheorem{lem}[thm]{Lemma}
\newtheorem{prop}[thm]{Proposition}
\newtheorem{cor}[thm]{Corollary}

\newtheorem{defi}[thm]{Definition}
\newtheorem{rem}[thm]{Remark}

\newtheorem*{thm*}{Theorem}
\newtheorem*{prop*}{Proposition}

\numberwithin{equation}{section}
\newcommand{\nc}{\newcommand}
\nc{\ssn}{\subsection{}} \nc{\sssn}{\subsubsection{}}

\newcommand{\dem }{\noindent\textbf{Proof}. }
\newcommand{\findem }{\hfill $\Box$ \vskip0.5cm }

\newcommand{\Z}{\ensuremath{\mathbb{Z}}}

\newcommand{\C}{\ensuremath{\mathbb{C}}}
\newcommand{\A}{\ensuremath{\mathbb{A}}}

\newcommand{\g}{\ensuremath{\mathfrak{g}}}
\newcommand{\h}{\ensuremath{\mathfrak{h}}}

\nc{\id}{\textrm{id}}

\newcommand{\Hom}{\operatorname{Hom}}

\newcommand{\rank}{\operatorname{rank}}

\newcommand{\Sym}{\operatorname{Sym}}

\newcommand{\mc}[1]{\mathcal{#1}}

\newcommand{\mb}[1]{\mathbb{#1}} 

\begin{document}

\title{Representations of twisted quantum  affine algebras}

\author{Juan Camilo Arias}
\author{Vyacheslav Futorny}
\author{Kailash C. Misra}

\address{School of Sciences and Engineering, Universidad del Rosario, Bogot\'a, Colombia}
\email{juancamil.arias@urosario.edu.co }
\address{Shenzhen International Center for Mathematics, Southern University of Science and Technology, China}
\email{vfutorny@gmail.com}
\address{Department of Mathematics, 
 North Carolina State University, 
 Raleigh, NC, USA}
\email{misra@ncsu.edu}
\subjclass[2020]{Primary 17B37, 17B67, 17B10}

\keywords{Quantum affine algebras,  Imaginary Verma modules, Kashiwara algebras}

\maketitle

\begin{abstract}
We develop the representation theory of imaginary Verma modules for twisted quantum affine algebras and construct the corresponding Kashiwara algebras. The twisted case presents substantial new difficulties compared with the untwisted setting: the PBW root vectors have nontrivial orbit structure, the imaginary root spaces occur with multiplicities, and roots of unity enter essentially into the defining commutation relations.

Our first main result is an explicit PBW-type basis for twisted quantum imaginary Verma modules associated with the natural imaginary partition of the affine root system. We reorganize the PBW basis of the twisted quantum affine algebra in a form adapted to this partition and use it to construct integral forms of the imaginary Verma modules. We prove that their classical limits are the corresponding imaginary Verma modules for twisted affine Lie algebras. This provides, in particular, a precise compatibility between the twisted quantum and classical theories that is not immediate from the standard PBW theory.

We then determine the structure and irreducibility of the twisted quantum imaginary Verma modules. We prove that the Heisenberg submodule generated by the imaginary root vectors is irreducible precisely at nonzero central charge, and establish the corresponding irreducibility criterion for the reduced twisted imaginary Verma modules at zero central charge.

The second main part of the paper introduces the Kashiwara algebra in the twisted setting. A crucial ingredient is an explicit current realization of the twisted quantum affine algebra. The required current formula for the generators $x_i^\pm(u)$ differs essentially from the untwisted formula and is needed to construct the Omega operators. We derive the resulting Omega-operator commutation relations, including the root-of-unity factors specific to the twisted cases. These relations lead to a new presentation of the Kashiwara algebra associated with the reduced twisted imaginary Verma modules. We prove that the negative current algebra is a simple module over this Kashiwara algebra and construct a symmetric non-degenerate bilinear form characterized by the Omega operators. Thus, the paper extends the Kashiwara-algebra approach from untwisted to twisted quantum affine algebras while revealing new algebraic phenomena caused by the twisting.
\end{abstract}

\section{Introduction}

Quantum affine algebras and their representations play a central role in the theory of quantum groups, with important connections to affine Lie algebras, integrable systems, vertex representations, and canonical and crystal bases. The representation theory of quantum affine algebras is particularly rich because, in addition to the standard highest weight theory associated with the usual triangular decomposition, there are other natural choices of positive and negative systems leading to different classes of representations. One important example is provided by imaginary Verma modules, introduced for affine Lie algebras in \cite{Fut}. These modules are obtained from a non-standard partition of the affine root system in which all real roots of the form $\alpha+k\delta$, with $\alpha$ a positive root of the underlying finite-dimensional Lie algebra and $k\in\mathbb Z$, are placed on the same side, while the positive imaginary roots are placed on the opposite side. This construction leads to highest weight modules with a structure substantially different from that of the usual Verma modules.

The quantum analogues of imaginary Verma modules were studied for untwisted quantum affine algebras in \cite{CFM01,AFM}.  When the central charge is nonzero, this Heisenberg subalgebra acts irreducibly on the corresponding finite weight part, and consequently the imaginary Verma module itself is irreducible. At zero central charge, a nontrivial maximal submodule appears naturally in the imaginary direction. The resulting quotient, usually called the reduced imaginary Verma module, has a particularly interesting representation-theoretic structure and is closely related to Kashiwara algebras and crystal bases.

The purpose of the present paper is to extend this theory to \emph{twisted quantum affine algebras}. Although many of the structural ideas are analogous to the untwisted case, the twisted setting introduces substantial additional difficulties. In particular, the imaginary root spaces have nontrivial multiplicities and depend on the twisting automorphism, the corresponding root vectors involve the orbit structure of the Dynkin diagram, and the commutation relations contain factors determined by roots of unity. These features require a separate treatment of the PBW bases, the classical limits, and the operators governing the representation theory.

Our first objective is to develop a form of the PBW theory for twisted quantum affine algebras that is adapted to the non-standard partition defining imaginary Verma modules. We use the convex ordering and root vectors constructed by Damiani \cite{D}, but reorganize the resulting PBW monomials according to the decomposition of the positive and negative roots into real, imaginary, and mixed parts. This ordering makes it possible to control the action of the negative part on the highest weight vector and to obtain an explicit spanning set, and subsequently a basis, for twisted quantum imaginary Verma modules.

The corresponding integral form over
\[
\mathbb A=\mathbb C[q,q^{-1},[k]_{q_i}^{-1}\mid i\in I_0,\ k>0]
\]
is then constructed, and its specialization at $q=1$ is identified with the imaginary Verma module for the corresponding twisted affine Lie algebra
\[
M_q(\lambda)=U_q(\widehat{\mathfrak g}^{(r)})
\otimes_{B_q}\mathbb C(q)v_\lambda.
\]
The PBW ordering allows us to describe the vectors of this module by ordered monomials involving the negative real root vectors and the negative imaginary root vectors. This description is essential for all subsequent arguments, since it provides a concrete filtration by height and allows us to control lower-order terms arising from the twisted commutation relations.

We next construct an $\mathbb A$-form of $M_q(\lambda)$ and study its classical limit. The resulting specialization is shown to recover the imaginary Verma module for the twisted affine Lie algebra $\widehat{\mathfrak g}^{(r)}$. Thus the quantum construction is compatible with the corresponding classical representation theory. This compatibility is also used in the analysis of reducibility. In particular, the Heisenberg submodule generated by the imaginary root vectors is irreducible precisely when the central charge is nonzero, and consequently the twisted quantum imaginary Verma module is irreducible exactly at nonzero level. At level zero, we obtain the reduced twisted imaginary Verma module by factoring out the submodule generated by the nontrivial proper part of the Heisenberg module. Under the corresponding nonvanishing conditions on the finite-dimensional Cartan directions, this reduced module is irreducible.

The second part of the paper concerns the construction of Kashiwara algebras associated with these reduced twisted imaginary Verma modules. For untwisted quantum affine algebras, Kashiwara algebras provide an efficient algebraic framework for studying crystal bases and the structure of the negative part of the quantum group \cite{CFM01,AFM}. The construction in the twisted case requires a careful analysis of the Drinfeld current realization. In particular, the current commutation relations involve the twisting automorphism $\sigma$ and an $r$-th root of unity $\omega$. The resulting formulas are substantially more involved than in the untwisted situation. 

We introduce the operators
\[
\sum_{k\in\mathbb Z}\Omega_{\phi_i}(k)u^{-k},
\]
acting on the negative current algebra $\mathcal N_q^-$. These operators play the role of the Omega operators in the untwisted theory, but their defining relations contain the additional orbit and root-of-unity factors characteristic of the twisted setting. We establish their commutation relations with the negative current generators and with one another. These relations provide the fundamental algebraic input for the construction of the twisted Kashiwara algebra $\mathcal K_q$, for which $\mathcal N_q^-$ is a simple left module.

The main results of the paper can therefore be summarized as follows.

\begin{itemize}
\item We construct an ordering of the twisted PBW root vectors adapted to the natural imaginary partition and use it to obtain explicit bases for twisted quantum imaginary Verma modules.

\item We construct integral forms of these modules and identify their classical limits with imaginary Verma modules for twisted affine Lie algebras.

\item We determine the irreducibility criterion for twisted quantum imaginary Verma modules and for their reduced quotients.

\item Using the Drinfeld realization, we construct the Omega operators associated with twisted quantum affine algebras and establish their defining commutation relations.

\item We construct the Kashiwara algebra associated with the reduced twisted imaginary Verma modules and prove that the negative current algebra is a simple module over this algebra.

\item We construct a non-degenerate symmetric form on the negative current algebra, characterized by the action of the Omega operators.
\end{itemize}

The twisted cases $r=2$ and $r=3$ exhibit genuinely new phenomena that are absent in the untwisted theory. The orbit structure of the Dynkin diagram and the associated roots of unity enter directly into the current relations and consequently into the defining relations of the Kashiwara algebra. Thus, although the overall strategy is inspired by the untwisted theory, the resulting algebras and their representations require new calculations and a separate treatment.

The paper is organized as follows. Section 2 recalls the root data, twisted quantum affine algebras, their root vectors, commutation relations, and the PBW basis in a form suitable for the subsequent arguments. Section 3 introduces twisted quantum imaginary Verma modules and establishes their PBW-type description. Section 4 constructs the corresponding $\mathbb A$-forms and studies their classical limits. Section 5 investigates the structure and irreducibility of twisted quantum imaginary Verma modules and their reduced quotients. Section 6 develops the Drinfeld realization and constructs the Omega operators together with their commutation relations. Finally, Section 7 introduces the associated Kashiwara algebra, proves the simplicity of the negative current algebra as a Kashiwara module, and constructs the corresponding non-degenerate bilinear form.

\section{Preliminaries}\label{prelims}

Let $\g$ be a simple finite dimensional Lie algebra over $\C$ of simply laced type $X_N$. Let $\sigma$ be an automorphism of $\g$ of order $r= 1,2,3$ and denote by $\hat{\g}^{(r)}$ the associated affine Lie algebra of type $X_N^{(r)}$. When $r=1$, it is the untwisted affine Lie algebra associated with $\g$; in other cases, it is the twisted affine Lie algebra that can be realized as

$$ \hat{\g}^{(r)}  :=  \Big(\bigoplus_{m\in \mb{Z}} \g^{[m \mod r]} \otimes \mb{C}t^{m} \Big) \oplus \C c \oplus \C d$$

where $c$ is a central element, $d$ a derivation, and $\g^{[m \mod r]} = \{ x \in \g \ |\ \sigma(x) = \omega^{m}x\}$. It is known, in the twisted case, that the possibilities for $X_N^{(r)}$ are the types $A_2^{(2)}$, $A_{2n}^{(2)}, \ (n\geq 2)$, $A_{2n-1}^{(2)}, \ (n \geq 3)$, $D_{n+1}^{(2)}, \ (n \geq 2)$, $E_6^{(2)}$, and $D_4^{(3)}$, where $N = 2, 2n, 2n-1, n+1, 6, 4$, respectively. Recall that $n$ is the number of vertices in the Dynkin diagram of $\g^{[0]} = \{x \in \g \ | \ \sigma(x) = 0\}$, which is a simple Lie algebra of types $A_1$, $B_n$, $C_n$, $B_n$, $F_4$, and $G_2$, respectively. \\

Set the index sets $I=\{0, 1, \dots, N\}$, $I_0 = I \setminus \{0\}$, and $I_\sigma = \{1, 2, \dots, n\}$. Let $A=(a_{ij})_{i,j\in I_r}$ denote the affine Cartan matrix for the twisted affine Lie algebra $\hat{\g}^{(r)}$, where $I_r=I$ when $r=1$ and $I_r=I_\sigma\cup\{0\}$ when $r=2,3$. Let $D = \text{diag}(d_0, d_1, \dots, d_N)$ be the diagonal matrix containing relatively prime positive integers $d_i$ such that $DA$ is symmetric. Let $\hat{\h} = \text{span}\{h_i \ |\ i\in I_r \}\oplus \C d$ be the  Cartan subalgebra of $\hat{\g}^{(r)}$, and the symmetric bilinear form satisfies $(\alpha_i \mid \alpha_j) = d_i a_{ij}$ along with $(\delta \mid \alpha_i) = (\delta \mid \delta) = 0$ for all $i,j \in I_r$. The form $( \ | \ ) / r$ induces a nondegenerate invariant bilinear form on $\hat{\g}^{(r)}$ ($r=2,3$) denoted by $\langle | \rangle$. Following the convention of Kac, the scaling factors $d_i$ for the diagonal matrix $D$ are explicitly given by:
$$ d_i = \begin{cases} 
1 & \text{if } X_N^{(r)} = A_{2n}^{(2)} \text{ and } i \neq 0, \\
r & \text{if } X_N^{(r)} \neq A_{2n}^{(2)} \text{ and } i \in \sigma\text{-orbit of } 0, \\
1 & \text{otherwise.}
\end{cases} $$

\subsection{Root datum} Let $\Delta$  be the root system of $\hat{\g}^{(1)}$ with simple roots $\Pi=\{\alpha_0', \alpha_1', \ldots, \alpha_n'\}$, and let $\delta=\alpha_0'+\theta$ be the null root where $\theta$ is the longest root of the underlying simple Lie algebra $\g$. Recall that $\Delta = \Delta^{re} \cup \Delta^{im}$, where $\Delta^{re}$ and $\Delta^{im}$ denote the real and imaginary sets of roots. Let $Q$, $P$, $Q^{\vee}$, and $P^{\vee}$ denote the root lattice, the weight lattice, the coroot lattice, and the coweight lattice respectively.\\

By definition $P^{\vee} := \bigoplus_{i\in I}\Z \omega_i^{\vee}$, where $\omega_i^{\vee}$ are the fundamental coweights defined by the equation $\langle \omega_i^{\vee} , \alpha_j \rangle = \delta_{ij}$. The weight lattice $P$ is the sub-lattice of $P^{\vee}$ defined by $P := \bigoplus_{i\in I}\Z \omega_i$, where $\omega_i = d_i\omega_i^{\vee}$.\\

The root system with multiplicities is denoted by $\tilde{\Delta}$. It is by definition $\tilde{\Delta} = \Delta^{re} \cup \tilde{\Delta}^{im}$ where $\tilde{\Delta}^{im} = \{(m\delta, i) \ | \ m\in \Z\setminus\{0\},\  \tilde{d}_i|m \}$ and $\tilde{d_i} = 1$ in the untwisted case or in the case $A_{2n}^{(2)}$, and $\tilde{d}_i = d_i$ for the other cases. We denote by $p: \tilde{\Delta} \to \Delta$ the natural projection mapping $(m\delta, i) \mapsto m\delta$. Let $\Delta_0$ be the finite root system of $\g^{[0]}$ and $\Pi_0=\{\alpha_1, \dots, \alpha_n\}$ be its corresponding simple roots.\\

Recall that a subset $S \subset \Delta$ is said to be closed if for any $\alpha, \beta \in S$ such that $\alpha + \beta \in \Delta$, we have $\alpha + \beta \in S$. A subset $S$ forms a closed partition of $\Delta$ if $S$ is closed, $\Delta = S \cup (-S)$, and $S \cap (-S) = \emptyset$. In this paper, we consider a non-standard closed partition of $\Delta$ (inequivalent to the standard partition $\Delta_+$), given by:
$$ S = \{ \alpha + k\delta \ \vert \ \alpha\in \Delta_{0,+}, \ k\in \Z \} \cup \{ k\delta \ \vert \ k>0 \} $$
satisfying $\Delta = S \cup (-S)$. This partition forms the precise foundational framework for constructing our imaginary Verma modules.

\subsection{Twisted quantum affine algebras}

Let $\hat{\g}^{(r)}$ be a twisted affine Lie algebra of type $X_N^{(r)}$, and let $U_q(\hat{\g}^{(r)})$ be its quantum affine algebra (see \cite{CP}, \cite{L01}), i.e., the associative and unital $\C(q^{1/2})$-algebra with generators $E_i, F_i, K_{\alpha}, \gamma^{\pm1/2}, D^{\pm 1}$ for $0\leq i \leq n$, $\alpha\in Q$, and defining relations:

$$ DD^{-1} = D^{-1}D = K_\alpha K_{-\alpha} = K_{-\alpha} K_\alpha = \gamma^{1/2}\gamma^{-1/2}=1 $$
$$ [\gamma^{\pm1/2}, U_q(\hat{\g}^{r})]=0$$
$$[D,K_{\pm \alpha}]=[K_\alpha,K_\beta]=0$$
$$(\gamma^{\pm 1/2})^2 = K_{\pm \delta} $$
$$ E_iF_j-F_jE_i = \delta_{ij}\frac{K_i - K_i^{-1}}{q_i-q_i^{-1}} $$
$$ K_{\alpha}E_iK_{-\alpha}= q^{(\alpha|\alpha_i)}E_i, \quad K_{\alpha}F_iK_{-\alpha}= q^{-(\alpha|\alpha_i)}F_i$$
$$ DE_iD^{-1} = q^{\delta_{i,0}}E_i, \quad DF_iD^{-1} = q^{-\delta_{i,0}}F_i $$
$$ \sum_{s=0}^{1-a_{ij}} (-1)^s E_i^{(1-a_{ij}-s)}E_jE_i^{(s)} =  \sum_{s=0}^{1-a_{ij}} (-1)^s F_i^{(1-a_{ij}-s)}F_jF_i^{(s)} = 0, \quad i\neq j $$

where $K_i=K_{\alpha_i}$, $K_{\alpha} = \prod_i K_i^{m_i}$ for $\alpha = \sum_i m_i\alpha_i$, $q_i = q^{d_i}$, $[n]_i=\frac{q_i^n - q_i^{-n}}{q_i-q_i^{-1}}$, the quantum factorial is defined by $[n]_i!=[n]_i[n-1]_i\cdots [2]_i[1]_i$, and divided powers given by $E_i^{(s)} = E_i^{s}/[s]_i!$, and $F_i^{(s)} = F_i^{s}/[s]_i!.$\\ 

This algebra possesses a triangular decomposition $U_q(\hat{\g}^{(r)}) = U_q^{-}\otimes U_q^0 \otimes U_q^{+}$, where $U_q^{-}$, $U_q^{+}$, and $U_q^0$ are the subalgebras of $U_q(\hat{\g}^{(r)})$ generated by $\{F_i \ |\ i=0, \ldots, n\}$, $\{E_i \ |\ i=0, \ldots, n\}$, and $\{K_i^{\pm 1}, D^{\pm 1}, \gamma^{\pm 1/2} \ |\ i=0, \ldots, n\}$, respectively. \\

Set $\phi$ and $\Omega$ as the $\C$-linear automorphism and $\C$-linear anti-automorphism, respectively, of $U_q(\hat{\g}^{(r)})$ defined by: 

$$ \phi(E_i) = F_i, \ \ \phi(F_i) = E_i, \ \ \phi(K_i) = K_i, \ \ \phi(D) = D,\ \, \phi(\gamma^{1/2}) = \gamma^{1/2}, \ \ \phi(q) = q^{-1}$$

$$ \Omega(E_i) = F_i, \ \ \Omega(F_i) = E_i, \ \ \Omega(K_i) = K_i^{-1}, \ \ \Omega(D) = D^{-1}, \ \ \Omega(\gamma^{1/2}) = \gamma^{-1/2}, \ \  \Omega(q) = q^{-1}$$

\vspace{3mm}

The braid group $\mc{B}$ associated with the extended affine Weyl group $\tilde{W}$ of $\hat{\g}^{(r)}$ is the group generated by $T_w$, for $w \in \tilde{W}$ with the relations $T_{w}T_{w'} = T_{ww'}$ if $\ell(ww') = \ell(w) + \ell(w')$, where $\ell : \tilde{W} \to \Z_{\geq 0}$ is the length function. Recall that by the results of Iwahori, Matsumoto, and Tits, the generators $T_w$ are independent of the reduced expression of $w$, where such an expression is one of the forms $w=\tau s_{i_1}\ldots s_{i_r}$ for $\ell(w)=r$ and $\tau$ a diagram automorphism. In this case, $T_w = \tau T_{i_1}\ldots T_{i_r}$ (here $T_{i}$ is a shorthand for $T_{s_i}$).\\

The braid group associated with the affine Weyl group $W$ is generated by $T_i$, $i\in I$, and acts as automorphisms on $U_q(\hat{\g}^{(r)})$ according to the following formulas: 

\begin{align*}
    T_i(E_i) = -F_iK_i; & \quad T_i(E_j) = \sum_{r=0}^{-a_{ij}} (-1)^{r-a_{ij}}q_i^{-r} E_i^{(-a_{ij}-r)}E_jE_i^{(r)} \ (i\neq j) \\
    T_i(F_i) = -K_i^{-1}E_i; & \quad T_i(F_j) = \sum_{r=0}^{-a_{ij}} (-1)^{r-a_{ij}}q_i^{r} F_i^{(r)}F_jF_i^{(-a_{ij}-r)} \ (i \neq j)\\
    T_i(K_{\beta}) = K_{s_i(\beta)}; & \quad \quad T_i(D) = DK_i^{-\delta_{i,0}}; \quad \quad T_i(D^{-1}) = D^{-1}K_i^{\delta_{i,0}}
\end{align*}

\vspace{3mm}

In particular, $T_i(K_j) = K_jK_i^{-a_{ij}}$ and $T_i(K_j^{-1}) = K_j^{-1}K_i^{a_{ij}}$. It is also known that $T_i$ satisfies $\Omega T_i = T_i\Omega$ and $\phi T_i = T_i^{-1}\phi$, for all $i\in I$.

\subsection{Root vectors and their commutation relations}

We are going to recall the definition of root vectors and some of their commutation relations. All the presented formulas can be found in \cite{D}; in particular, section 3 of the cited paper gives an exhaustive exposition of the case $A_2^{(2)}$. \\

For all $i \in I_0$, define the element $\varpi_i \in P^{\vee}$ as $\varpi_i := \tilde{d}_i\omega_i$

$$ \varpi_i =  \begin{cases} \omega_i^{\vee} & \mbox{ in the untwisted case or in type} A_{2n}^{(2)} \\
\omega_i & \mbox{otherwise}   
\end{cases}$$

Let $\pi : \Z \to I$ be a map chosen such that

\begin{itemize}
    \item there exists $0 = M_0 < M_1 < \cdots < M_n = M = \ell(\varpi_1\cdots\varpi_n) = \sum_{i=1}^n \ell(\varpi_i) $ (here we use the notation $M_j = \sum_{i=1}^j \ell(\varpi_i)$).
    \item there exists $\tau_i, \ldots, \tau_n \in \mc{T}$ such that $\varpi_i = s_{\pi_{M_{i-1} + 1}} \cdot \ldots \cdot s_{\pi_{M_i}}\tau_i$, for all $i\in I$.
    \item $\pi_{r + M_n} = \tau_1\cdot\cdots\cdot\tau_n(\pi_r)$
\end{itemize}

and let

$$ \beta_r = \begin{cases}
    s_{\pi(0)}s_{\pi(-1)} \cdots s_{\pi(r+1)}(\alpha_{\pi(r)}) & r\leq 0 \\
    s_{\pi(1)}s_{\pi(2)} \cdots s_{\pi(r-1)}(\alpha_{\pi(r)}) & r \geq 1 \\
\end{cases}  $$ 

\vspace{0.3cm}

then, the map $\pi$ induces a bijection $\pi' : \Z \to \Delta_{+}^{re}$ given by $\pi'(r) = \beta_r$ such that $\{\beta_r | r\leq 0\} = \{ \alpha + k\delta | \alpha \in \Delta_{0,+}, k\geq 0 \}$ and $\{\beta_r | r\geq 1\} = \{ -\alpha + k\delta | \alpha \in \Delta_{0,+}, k > 0 \}$, see \cite{B02} and \cite{P}. As a matter of notation, we will consider 

$$ w_r = \begin{cases}
    s_{\pi(0)}s_{\pi(-1)} \cdots s_{\pi(r+1)} & r\leq 0 \\
    s_{\pi(1)}s_{\pi(2)} \cdots s_{\pi(r-1)} & r \geq 1 \\
\end{cases}  $$ 

so that $\beta_r = w_r(\alpha_{\pi(r)})$.\\

The set $\tilde{\Delta}_+$ possesses a (convex\footnote{Here, convex means the following: If $\alpha \prec \beta$ and $\alpha \in \Delta_{+}^{re}$ and $\beta\in \Delta_{+}$ are such that $\alpha + \beta \in \Delta_{+}$, then $\alpha \prec \alpha + \beta \prec \beta$.}) total ordering $\preceq$ defined as follows\footnote{As is usual, we define $x\prec y$ by $x\preceq y$ and $x \neq y$.}: 

\begin{itemize}

\item $ \text{For all } r,s\in \mb{Z}, \quad  \beta_r   \succeq \beta_s  \Leftrightarrow s\leq r\leq 0 \text{ or } r\leq 0 < s \text{ or } 1\leq s \leq r$.

\item $ \text{For all } r\in \mb{Z}, \text{ and for all } \alpha \in \tilde{\Delta}_+^{im}, \quad \beta_r \succeq \alpha \Leftrightarrow r\leq 0 $.

\item $ \text{For all } (r\delta,i), (s\delta,j) \in \tilde{\Delta}_+^{im}, \quad (r\delta,i) \succeq (s\delta,j) \Leftrightarrow r < s \text{ or } r=s, i\geq j $.

\end{itemize}

So, we have

$$ \beta_{0} \succ \beta_{-1} \succ \beta_{-2} \succ \cdots \succ (\delta,N) \succ \cdots (\delta, 1) \succ (2\delta,N) \succ \cdots \succ (r\delta,2) \succ (r\delta, 1) \succ \cdots \succ \beta_2 \succ \beta_1  $$

\vspace{0.3cm} 

and in the root system without multiplicities

$$ \beta_{0} \succ \beta_{-1} \succ \beta_{-2} \succ \cdots \succ \delta \succ 2\delta \succ \cdots \succ \beta_2 \succ \beta_1  $$

\vspace{0.3cm} 

Note that we inverted the original ordering defined in \cite{D}. We also say that two sets $A$ and $B$ satisfy the relation $A \prec B$ if $x\prec y$ for any $x\in A$ and $y\in B$. With this notation, we have that the set of positive roots is divided into three sets: 

$$ \{ \alpha + k\delta | \alpha \in \Delta_{0,+}, k\geq 0 \} \succ \{ k\delta | k > 0 \} \succ \{ -\alpha + k\delta | \alpha \in \Delta_{0,+}, k > 0 \} $$

\vspace{0.3cm} 

and, similarly, the set of negative roots is divided as:

$$ \{ -\alpha - k\delta | \alpha \in \Delta_{0,+}, k\geq 0 \} \prec \{ -k\delta | k > 0 \} \prec \{ \alpha - k\delta | \alpha \in \Delta_{0,+}, k > 0 \} $$

\vspace{4mm}

The above definitions, allow us to define root vector as follows. For each $\beta_r = \omega_r(\alpha_{\pi(r)}) \in \Delta_{+}^{re}$, we define the (quantum) positive real root vectors by 

$$ E_{\beta_r} = \begin{cases}
    T_{w_r^{-1}}^{-1}(E_{\alpha(\pi(r))}) = T_{s_{\pi(0)}}^{-1}T_{s_{\pi(-1)}}^{-1}\cdots T_{s_{\pi(r+1)}}^{-1}(E_{\alpha(\pi(r))}) & r\leq 0 \\
    T_{w_r}(E_{\alpha(\pi(r))}) = T_{s_{\pi(1)}}T_{s_{\pi(2)}} \cdots T_{s_{\pi(r-1)}}(E_{\alpha(\pi(r))}) & r \geq 1 \\
\end{cases}  $$ 

\vspace{5mm}

For $i \in I_0$ and $k>0$ let $k\delta$ be a positive imaginary root. Set $\tilde{E}_{k\tilde{d}_i\delta}^{(i)} = K_i^{-1}[E_i, 
K_iE_{k\tilde{d}_i\delta - \alpha_i}]$, and define the $N$ imaginary positive root vectors, $E_{k\delta}^{(i)}$, by the following functional equation: 

$$ \exp\Big( (q_i - q_i^{-1}) \sum_{k = 1}^{\infty} E_{k\tilde{d}_i\delta}^{(i)}\ z^k \Big) = 1 + (q_i - q_i^{-1})\sum_{k=1}^{\infty} \tilde{E}_{k\tilde{d}_i\delta}^{(i)}z^k $$

We also define $F_{k\tilde{d}_i\delta}^{(i)}:=\Omega(E_{k\tilde{d}_i\delta}^{(i)})$.

\begin{rem}
    These imaginary root vectors are the same as the ones defined in \cite{D}, because $\tilde{E}_{k\tilde{d}_i\delta}^{(i)} = K_i^{-1}[E_i, K_iE_{k\tilde{d}_i\delta - \alpha_i}] = K_i^{-1}E_iK_iE_{k\tilde{d}_i\delta - \alpha_i} - E_{k\tilde{d}_i\delta - \alpha_i}E_i = q_i^{-2}E_iE_{k\tilde{d}_i\delta - \alpha_i} - E_{k\tilde{d}_i\delta - \alpha_i}E_i$
\end{rem}

\begin{rem}
    The root vectors $E_{k\tilde{d}_i\delta \pm \alpha_i}$ can be defined in terms of the braid operators as follows: 
    $$ E_{k\tilde{d}_i\delta + \alpha_i} = T^{-k}_{\varpi_i}(E_i), \quad \mbox{ for all } \quad k \in \Z_{\geq 0} $$
    $$ E_{k\tilde{d}_i\delta - \alpha_i} = T^{k}_{\varpi_i}T^{-1}_{i}(E_i) = -T^{k}_{\varpi_i}(K_i^{-1}F_i), \quad \mbox{ for all } \quad k\in \Z_{> 0} $$
\end{rem}

The main commutation relations between root vectors are given in Theorem 5.3.2 of \cite{D}. We recollect them here: Let $i,j \in I_0$ and $r,s > 0$ be such that $\tilde{d}_i|r$ and $\tilde{d}_j|s$. Then we have: 

\begin{equation}\label{for2.1}
    [E_{r\delta}^{(i)}, E_{(s-\tilde{d}_j)\delta + \alpha_j}] = \begin{cases}
        x_{ijr}E_{(s-\tilde{d}_j + r)\delta+\alpha_j} & \mbox{if } \tilde{d}_j|r \\
        0 & \mbox{otherwise}
    \end{cases}
\end{equation}

\begin{equation}\label{for2.2}
    [E_{r\delta}^{(i)}, E_{s\delta + \alpha_j}] = \begin{cases}
        -x_{ijr}E_{(s + r)\delta-\alpha_j} & \mbox{if } \tilde{d}_j|r \\
        0 & \mbox{otherwise}
    \end{cases}
\end{equation}

\begin{equation}\label{commfor}
    [E_{r\delta}^{(i)}, F_{(s-\tilde{d}_j)\delta + \alpha_j}] = \begin{cases}
        x_{ijr}K_{(s-\tilde{d}_j)\delta + \alpha_j}E_{(r - s + \tilde{d}_j)\delta-\alpha_j} & \mbox{if } \tilde{d}_j|r \mbox{ and } r\geq s \\
        -x_{ijr}K_{r\delta}F_{(s-r-\tilde{d}_j)\delta + \alpha_j} & \mbox{if } \tilde{d}_j|r \mbox{ and } r < s\\
        0 & \mbox{otherwise}
    \end{cases}
\end{equation}

\begin{equation}
    [E_{r\delta}^{(i)}, F_{s\delta - \alpha_j}] = \begin{cases}
        -x_{ijr}E_{(r - s)\delta+\alpha_j}K_{\alpha_j-s\delta} & \mbox{if } \tilde{d}_j|r \mbox{ and } r\geq s \\
        x_{ijr}F_{(s-r)\delta - \alpha_j}K^{-1}_{r\delta} & \mbox{if } \tilde{d}_j|r \mbox{ and } r < s\\
        0 & \mbox{otherwise}
    \end{cases}
\end{equation}

\begin{equation}\label{for2.5}
    [E_{r\delta}^{(i)}, E_{s\delta}^{(j)}] = 0
\end{equation}

\begin{equation}
    [E_{r\delta}^{(i)}, F_{s\delta}^{(j)}] = \delta_{rs}x_{ijr}\frac{K_{r\delta} - K^{-1}_{r\delta}}{q_j - q_j^{-1}}
\end{equation}

where

\begin{equation*}
    x_{ijr} = \begin{cases}
        (o(i)o(j))^r{\displaystyle\frac{[ra_{ij}]_{q_i}}{r}} & \mbox{in the non-twisted case or case } A_{2n}^{(2)} \mbox{ for } (i,j) \neq (1,1)\\
        {\displaystyle\frac{[2r]_q}{r}}(q^{2r}+(-1)^{r-1}+q^{-2r}) & \mbox{in case } A_{2n}^{(2)}, i=j=1\\
        (o(i)o(j))^{\frac{r}{\tilde{d}_ib_{ij}}}{\displaystyle\frac{\tilde{d}_ib_{ij}[ra_{ij}]_{q}}{r[d_i]_q}} & \mbox{otherwise}
    \end{cases}
\end{equation*}

where $o : I_0 \to \{\pm 1\}$ is a function on the set of vertices of the Dynkin diagram such that if $a_{ij} < 0$ (i.e. the vertices are adjacent), then $o(i)o(j)=-1$, and $b_{ij}$ is 1 if $a_{ij}\geq -1$, or it is $d_j$ in any other case.  \\

Let $i,j\in I_0$ and $r>0$; then $i \neq 1$ and

\begin{equation}
    T_{\varpi_j}(\tilde{E}_{r\tilde{d_i}\delta}^{(i)}) = \tilde{E}_{r\tilde{d_i}\delta}^{(i)}, \quad \quad T_{\varpi_j}(E_{r\tilde{d_i}\delta}^{(i)}) = E_{r\tilde{d_i}\delta}^{(i)}
\end{equation}

These two relations follow from \cite{D} Proposition 2.2.4 item b, for the case $(X_N^{(r)}, i) \neq (A_{2n}^{(2)},1)$, and Proposition 4.4.1 for the case $A_{2n}^{(2)}$ with $i=1$. We conclude this subsection with the following commutation relation:

\begin{align}\label{for2.8}
    [E_{k\delta}^{(i)}, [E_j, F_j]] &=  x_{ijk}\begin{cases}
        [E_{k\delta + \alpha_i}, F_j] + [E_j, K_{\alpha_j}E_{k\delta-\alpha_j}] & \mbox{if } \tilde{d}_j|r\\
        0 & \mbox{ otherwise }
    \end{cases}
\end{align}

which follows from the fact that $[E_{k\delta}^{(i)}, [E_j, F_j]] = [[E_{k\delta}^{(i)}, E_j], F_j] + [E_j, [E_{k\delta}^{(i)}, F_j]]$ and formulas (\ref{for2.1}) and (\ref{commfor}).

\subsection{A PBW basis for twisted quantum affine algebras} 

We recall the construction of the PBW basis for twisted quantum affine algebras presented in \cite{D}. We do not provide any proofs and refer the reader to the cited article and the references therein.\\

For any $\eta \in Q$ we define the set 
$$\mc{P}(\eta) = \{\underline{\gamma} = (\gamma_1, \ldots, \gamma_r) \in \bigcup_{r\in \mb{N}} \tilde{\Delta}^r_{+} | \gamma_1 \preceq \cdots \preceq \gamma_r, \sum_{u=1}^{r} p(\gamma_u) = \eta \}$$ 

and consider the set $\mc{P} := \bigcup_{\eta \in Q} \mc{P}(\eta)$ with the induced lexicographical order, denoted also by $\preceq$ and defined as follows: let $\underline{\gamma} = (\gamma_1, \ldots, \gamma_r), \underline{\gamma}' = (\gamma_1', \ldots, \gamma_s') \in \mc{P}$, we say that $\underline{\gamma} \preceq \underline{\gamma}'$ if and only if $l = \min\{u= 1, \ldots, r | \gamma_u \neq \gamma_u'\}$ then $\gamma_l \prec \gamma_l'$.\\

Given an arbitrary function $x: \tilde{\Delta}_{+} \to U_q(\hat{\g}^{(r)})$, $\alpha\mapsto x_\alpha$, we extend it to $\mc{P}$ by the rule $x(\underline{\gamma}) := x_{\gamma_1}\cdots x_{\gamma_r}$, where $\underline{\gamma} = (\gamma_1, \ldots, \gamma_r) \in \mc{P}$. With this notation, we can establish the following.

\begin{thm}\label{TwistPBW}
    The set $\{ E(\underline{\gamma}) \ | \ \underline{\gamma} \in \mc{P} \}$ is a $\C(q)$-basis of $U_q^{+}(\hat{\g}^{(r)})$. Moreover, the set $\{ F(\underline{\gamma}) K_\lambda E(\underline{\gamma'}) \ |\ \lambda\in Q, \underline{\gamma}, \underline{\gamma}'\in \mc{P} \}$ is a $\C(q)$-basis of $U_q(\hat{\g}^{(r)})$. Furthermore, for $\alpha, \beta \in \tilde{\Delta}_+$ such that $\alpha \succ \beta$ then
    $$ E_{\alpha}E_{\beta} - q^{(p(\alpha) | p(\beta))}E_{\beta}E_{\alpha} = \sum_{\beta \prec \gamma_1 \prec \cdots \prec \gamma_r \prec \alpha} c_{\underline{\gamma}}E(\underline{\gamma}) $$
    for $\underline{\gamma} = (\gamma_1, \ldots, \gamma_r)$ and $c_{\underline{\gamma}} \in \C[q,q^{-1}]$.
    
\end{thm}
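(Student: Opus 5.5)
This theorem is Damiani's PBW theorem \cite{D}, rewritten in our reversed ordering. My plan is therefore to transport Damiani's statements through the order reversal rather than reprove them from scratch, and then to derive the full-algebra basis from the triangular decomposition.

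\emph{First step: the basis of $U_q^+$.} In \cite{D} the monomials $E(\underline{\gamma})$ are shown to form a $\C(q)$-basis of $U_q^+$ for Damiani's convex order on $\tilde{\Delta}_+$. Our order $\preceq$ is exactly the opposite of hers, so a monomial that is non-decreasing for $\preceq$ is a non-increasing monomial in her order. I would pass between the two conventions with the anti-automorphism $\Omega$, composed with $\phi$: the composite $\phi\Omega$ is an anti-automorphism fixing $E_i$ and $K_i^{-1}$ up to $q\mapsto q$. I need to check the following.
\begin{enumerate}
\item The composite sends each root vector $E_\beta$ (real, or imaginary $E^{(i)}_{k\tilde d_i\delta}$) to a scalar multiple of the corresponding root vector, up to a power of $q$. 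For real roots this follows from $\Omega T_i=T_i\Omega$ and $\phi T_i=T_i^{-1}\phi$, applied to the definitions $T_{w_r^{-1}}^{-1}$ and $T_{w_r}$. For imaginary roots it follows from the functional equation together with the Remark expressing $\tilde E^{(i)}$ as a $q$-commutator.
\item The composite reverses the order of products.
\end{enumerate}
Together these turn Damiani's basis into the family $\{E(\underline\gamma)\}$ for our order. This sign and scalar bookkeeping, especially for $\tilde E^{(i)}$ where the image is $q_i^{-2}E_iE_{\cdot}-E_{\cdot}E_i$ up to a rescaling, is the step I expect to be the main obstacle.

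\emph{Second step: the Levendorskii--Soibelman type relation.} For $\alpha\succ\beta$ I would take the convexity relation of \cite{D}, stated there for her order, and apply the same anti-automorphism. The interval $(\beta,\alpha)$ is preserved as a set under the reversal, and the $q$-power $q^{(p(\alpha)|p(\beta))}$ is symmetric in $\alpha$ and $\beta$. The coefficients remain in $\C[q,q^{-1}]$ provided the scalars in the first step are units of $\C[q,q^{-1}]$, which I would verify.

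For pairs of imaginary roots, the relation reduces to \eqref{for2.5}: the imaginary root vectors commute, and $(p(\alpha)|p(\beta))=0$. This is consistent with the statement, with the right-hand side equal to zero.

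\emph{Third step: the full algebra.} I would apply $\Omega$ to the basis of $U_q^+$ to get the basis $\{F(\underline\gamma)\}$ of $U_q^-$; this uses $F_\beta=\Omega(E_\beta)$. The elements $K_\lambda$ form a basis of $U_q^0$, modulo the central elements $\gamma^{\pm1/2}$ and $D$, which are handled as in \cite{D}. The triangular decomposition $U_q=U_q^-\otimes U_q^0\otimes U_q^+$, as a linear isomorphism given by multiplication, then yields the basis $F(\underline\gamma)K_\lambda E(\underline\gamma')$.
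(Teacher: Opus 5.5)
\textbf{There is a genuine gap: step (1) is false.} The problem is not scalar bookkeeping. The identities you cite give $\phi\Omega\,T_i=T_i^{-1}\,\phi\Omega$. So $\phi\Omega$ sends $E_{\beta_r}=T_{\pi(1)}\cdots T_{\pi(r-1)}(E_{\pi(r)})$ (for $r\ge 1$) to $T_{\pi(1)}^{-1}\cdots T_{\pi(r-1)}^{-1}(E_{\pi(r)})$, and $E_{\beta_r}$ with $r\le 0$ to $T_{\pi(0)}\cdots T_{\pi(r+1)}(E_{\pi(r)})$. These images have the right weight, but in general they are not multiples of $E_{\beta_r}$.

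Already for a single braid operator with $a_{ij}=-1$, the paper's formula gives $T_i(E_j)=-E_iE_j+q_i^{-1}E_jE_i$, whereas $\phi\Omega(T_i(E_j))=T_i^{-1}(E_j)=-E_jE_i+q_i^{-1}E_iE_j$. Since $E_iE_j$ and $E_jE_i$ are linearly independent in $U_q^+$, proportionality would force $q_i^2=1$.

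What $\phi\Omega$ actually does is carry the convex PBW basis attached to the word $(\pi(k))_{k\in\Z}$ onto the one attached to the reversed word $k\mapsto\pi(1-k)$. Its root vectors are different elements of $U_q^+$. Hence your first two steps give a PBW basis and Levendorskii--Soibelman relations for the family $\phi\Omega(E_\beta)$, not for the $E_\beta$ in the statement. Your third step has the same defect: $\Omega$ reverses products, so $\Omega(E(\underline\gamma))=F_{\gamma_r}\cdots F_{\gamma_1}$ is a decreasing monomial, not $F(\underline\gamma)$.

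\textbf{What the paper does, and how to repair your route.} The paper's proof is only a citation of Theorems 6.2.2 and 6.2.3 of \cite{D}. If you want to justify the reversal of the order yourself, keep the root vectors fixed and reverse only the order of the factors. The tool for this is the De Concini--Kac/Beck--Kac filtration underlying Proposition~\ref{KacBeckCommRel}, not an anti-automorphism.
\begin{itemize}
\item By Damiani's commutation relations, the associated graded algebra is generated by $q$-commuting root vectors. So a monomial written in the reversed order equals a power of $q$ times the corresponding ordered monomial, modulo terms of strictly lower filtration degree.
\item Each $Q_+$-graded piece of $U_q^+$ is finite-dimensional. This unitriangularity therefore shows that the reversed-order monomials again form a basis. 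Applying $\Omega$ to the decreasing $E$-monomials then produces exactly the $F(\underline\gamma)$.
\item The commutation relation in the reversed order, with coefficients still in $\C[q,q^{-1}]$, follows by reordering the right-hand side inductively. This uses only relations for pairs of roots lying strictly between $\beta$ and $\alpha$.
\end{itemize}
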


\dem See Theorem 6.2.2 and Theorem 6.2.3 of \cite{D}. \findem

To be more explicit, let us consider the following sets of positive and negative roots:

\begin{align*}
    A_1 &= \{ \alpha + k \delta | \alpha \in \Delta_{0,+}, k\geq 0 \} \\
    A_2 &= \{ k\delta | k\geq 0 \} \\
    A_3 &= \{ -\alpha + k \delta | \alpha \in \Delta_{0,+}, k > 0 \} \\
    B_1 &= \{ -\alpha - k \delta | \alpha \in \Delta_{0,+}, k\geq 0 \} \\
    B_2 &= \{ -k\delta | k > 0 \} \\
    B_3 &= \{ \alpha - k \delta | \alpha \in \Delta_{0,+}, k > 0 \} \\
\end{align*}

They satisfy 

$$ A_1 \succ A_2 \succ A_3 \succ B_3 \succ B_2 \succ B_1 $$

Let $X_i$ ($i = 1,2,3$) be a monomial of the form $E_{\gamma_i^1}\cdots E_{\gamma_i^r}$ for $\gamma_i^j \in A_i$, $j=1,\ldots, r$, and $\gamma_i^1 \preceq \cdots \preceq \gamma_i^r$. Let $Y_i$ ($i = 1,2,3$) be a monomial of the form $E_{\eta_i^1}\cdots E_{\eta_i^s}$, for $\eta_i^l \in B_i$, $l=1,\ldots, s$, and $\eta_i^1 \preceq \cdots \preceq \eta_i^s$. And let $Z$ be a product of elements in $U_q^0(\hat{\g}^{(r)})$. Then, we can rephrase Theorem \ref{TwistPBW} as follows.

\begin{thm}\label{PBWBasisbasic}
    The set $\{ X_3X_2X_1 \  | \ X_i \in A_i, \ i=1,2,3\}$ is a $\C(q)$-basis of $U_q^{+}(\hat{\g}^{(r)})$. Moreover, the set $\{ Y_1Y_2Y_3ZX_3X_2X_1 \  | \ X_i \in A_i, Y_i\in B_i, Z \in  U_q^0(\hat{\g}^{(r)}), \ i= 1,2,3 \}$ is a $\C(q)$-basis of $U_q(\hat{\g}^{(r)})$.
\end{thm}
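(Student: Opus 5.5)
This is a regrouping of Theorem \ref{TwistPBW}, and the plan is to reduce it to that result by checking that the ordered monomials $E(\underline{\gamma})$, $\underline{\gamma}\in\mc{P}$, are exactly the products $X_3X_2X_1$.

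\textbf{Step 1: the positive part.} Recall the chain $A_1\succ A_2\succ A_3$. In the list displayed after the definition of $\preceq$, every $\beta_r$ with $r\le 0$ lies above every imaginary root, and every $\beta_r$ with $r\geq 1$ lies below every imaginary root. Hence an element $\underline{\gamma}=(\gamma_1\preceq\cdots\preceq\gamma_r)$ of $\mc{P}$ splits uniquely as a concatenation of three blocks. The first block consists of the entries in $A_3$, the middle block of the entries in $A_2$ (read in $\tilde\Delta_+^{im}$, so with multiplicity labels), and the last block of the entries in $A_1$. Each block is weakly increasing in $\preceq$. Therefore
\[
E(\underline{\gamma})=X_3X_2X_1,
\]
where $X_i$ is the ordered monomial built from the block in $A_i$. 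Conversely, any triple of ordered monomials $X_i$ with roots in $A_i$ concatenates to an element of $\mc{P}$. This gives a bijection between the two index sets, and the first statement is then literally the first statement of Theorem \ref{TwistPBW}.

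\textbf{Step 2: the negative part and the full algebra.} Set $E_{-\gamma}:=F_\gamma$, which is $\Omega$ applied to the positive root vector, so that $Y_i$ are monomials in the $F$'s. Since $\Omega$ is an anti-automorphism, applying it to the basis $\{X_3X_2X_1\}$ of $U_q^+$ gives a basis $\{\Omega(X_1)\Omega(X_2)\Omega(X_3)\}$ of $U_q^-$. Here $\Omega(X_i)$ is a monomial in root vectors indexed by $-A_i=B_i$. The order of factors is reversed by $\Omega$, and negation of roots reverses $\preceq$ (this is how the chain $B_3\succ B_2\succ B_1$ is set up). So $\Omega(X_i)$ is again an ordered monomial of type $Y_i$. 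One also checks that $\Omega$ sends scalars $\C(q)$ to $\C(q)$ via $q\mapsto q^{-1}$, so linear independence and spanning over $\C(q)$ are preserved.

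Combining this with the second statement of Theorem \ref{TwistPBW}, which gives the basis $F(\underline{\gamma})K_\lambda E(\underline{\gamma}')$, and with the triangular decomposition $U_q=U_q^-\otimes U_q^0\otimes U_q^+$, we get that the products $Y_1Y_2Y_3ZX_3X_2X_1$ form a basis. Here $Z$ ranges over a basis of $U_q^0$, namely monomials in $K_\lambda$, $D^{\pm1}$ and $\gamma^{\pm1/2}$.

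\textbf{Main obstacle.} The only delicate point is bookkeeping. One must verify that the block orders inside each $B_i$ (in particular the ordering among the imaginary roots $(k\delta,i)$ with multiplicities) agree with the order obtained from $F(\underline{\gamma})$ via $\Omega$. If the conventions differ, one would instead invoke the commutation formula in Theorem \ref{TwistPBW}. That formula shows that reordering two root vectors only produces terms strictly between them in $\preceq$, with coefficients in $\C[q,q^{-1}]$. An induction on length and on the lexicographic order of $\mc{P}$ then shows that the two families of ordered monomials are related by a unitriangular change of basis, so both are bases.
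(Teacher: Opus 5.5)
Your proposal is correct and is essentially the paper's argument: the paper's proof simply observes that the ordered PBW monomials of Theorem \ref{TwistPBW} regroup along the chain $A_1\succ A_2\succ A_3$, which is your Step 1. Your extra care on the negative side is worthwhile. You obtain the basis of $U_q^-$ as the image under $\Omega$ of the basis of $U_q^+$ and then invoke the triangular decomposition, and this is what actually yields the stated order $Y_1Y_2Y_3$: read literally, $F(\underline{\gamma})=F_{\gamma_1}\cdots F_{\gamma_r}$ with $\gamma_1\preceq\cdots\preceq\gamma_r$ gives the reversed word, and matching that with $Y_1Y_2Y_3$ needs exactly the convexity (unitriangularity) argument from your last paragraph.
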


\dem Immediate from Theorem \ref{TwistPBW} together with the total order $\preceq$ described above.

\findem

We can construct also a grading by degree on $U_q(\hat{\g}^{(r)})$, similarly as was done by Beck and Kac in \cite{BK}. Any element of the form $X_3X_2X_1$ can be written as $\prod_{\beta\in \tilde{\Delta}_{+}} E_{\beta}^{a'_\beta}$ for $a'_{\beta} \in \Z_{+}$, if this is the case we will denote the element by $E_{(a'_\beta)}$, where $(a'_\beta) \in \Z^{\tilde{\Delta}_{+}}$. The same is done for elements of the form $Y_1Y_2Y_3$. Set $K$ to be a product of elements in $U_q^0(\hat{\g}^{(r)})$. The total height of a monomial $F_{(a_\beta)}K E_{(a'_\beta)}$ is given by 

$$ d_0(F_{(a_\beta)}K E_{(a'_\beta)}) = \sum_{\beta \in \tilde{\Delta}_{+}} (a_\beta + a'_\beta) \mathrm{ht}(\beta)$$

and its total height degree is 

$$ d(F_{(a_\beta)}K E_{(a'_\beta)}) = (d_0(F_{(a_\beta)}K E_{(a'_\beta)}), (a_\beta), (a'_\beta)) \in \Z^{2\tilde{\Delta}_{+} + 1} $$

Next, we consider $\Z^{2\tilde{\Delta}_{+} + 1}$ as a totally ordered semigroup with the usual lexicographical order. We define $U_s$, for any $s\in \Z^{2\tilde{\Delta}_{+} + 1}$, as the span of the basis monomials $F_{(a_\beta)}KE_{(a'_{\beta})}$ with $d(F_{(a_\beta)}KE_{(a'_{\beta})}) \leq s$. Let $Gr U_q(\hat{\g}^{(r)})$ be the associated graded algebra with respect to the above filtration.

\begin{prop}\label{KacBeckCommRel} In the algebra $Gr U_q(\hat{\g}^{(r)})$, we have the following commutation relations:
    \begin{align*}
    K_{\alpha}K_\beta = K_{\alpha + \beta} & \quad; \quad K_0 = 1\\
    K_\alpha E_\beta  = q^{(\alpha | \beta)} E_\beta K_\alpha & \quad; \quad E_\alpha E_{-\beta} = E_{-\beta} E_\alpha, \quad \mbox{ for } \alpha,\beta \in \tilde{\Delta}_{+}\\
    E_{\alpha}E_{\beta} = q^{(\alpha | \beta)}E_{\beta}E_{\alpha} & \quad ; \quad E_{-\alpha}E_{-\beta} = q^{(\alpha | \beta)}E_{-\beta}E_{-\alpha} \mbox{ for } \alpha,\beta \in \tilde{\Delta}_{+}, \beta \prec \alpha
\end{align*}
\end{prop}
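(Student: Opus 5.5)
This follows the De Concini--Kac / Beck--Kac argument: show that every correction term produced by the PBW straightening relations lies strictly lower in the filtration, so only the leading $q$-commutation survives in $Gr\,U_q(\hat{\g}^{(r)})$. The first task is to check that the filtration $\{U_s\}$ is an algebra filtration, i.e. $U_sU_t\subseteq U_{s+t}$. For this we rewrite a product of two basis monomials $F_{(a)}KE_{(a')}\cdot F_{(b)}K'E_{(b')}$ in PBW form using the relations of Theorem \ref{TwistPBW}. We need to show that every term produced either has total height $d_0$ strictly smaller than $d_0$ of the product, or has the same total height and exponent vector at most the sum of the exponents.

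The relations involving $K_\alpha$ hold exactly in $U_q(\hat{\g}^{(r)})$, from the defining relations and the weight of root vectors. For $\alpha\succ\beta$ in $\tilde{\Delta}_+$, Theorem \ref{TwistPBW} gives
$$E_\alpha E_\beta-q^{(p(\alpha)|p(\beta))}E_\beta E_\alpha=\sum c_{\underline{\gamma}}E(\underline{\gamma}),$$
with $\beta\prec\gamma_1\prec\cdots\prec\gamma_r\prec\alpha$ and $\sum p(\gamma_u)=p(\alpha)+p(\beta)$. Each monomial on the right has the same total height but a different exponent vector. Following Levendorskii--Soibelman and Beck--Kac, we order the coordinates of $\Z^{\tilde{\Delta}_+}$ compatibly with $\preceq$. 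Since all $\gamma_u$ lie strictly between $\beta$ and $\alpha$, the exponent vector of $E(\underline{\gamma})$ is lexicographically smaller than that of $E_\alpha E_\beta$: the first coordinate in which they differ is the extreme one, $\alpha$ or $\beta$, where $E(\underline{\gamma})$ has a zero. Hence the correction terms die in $Gr$, which gives $E_\alpha E_\beta=q^{(\alpha|\beta)}E_\beta E_\alpha$. Applying $\Omega$ gives the negative version, since $\Omega$ is an anti-automorphism with $q\mapsto q^{-1}$ and therefore reverses products and exponents consistently.

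The main obstacle is the mixed relation $E_\alpha E_{-\beta}=E_{-\beta}E_\alpha$ in $Gr$. In $U_q$, the commutator $[E_\alpha,F_\beta]$ is generally nonzero; for example, (\ref{commfor}) produces terms like $K_{\ast}E_{\gamma}$ or $K_{r\delta}F_{\gamma}$ with $\mathrm{ht}(\gamma)<\mathrm{ht}(\alpha)+\mathrm{ht}(\beta)$. We would prove by induction on $\mathrm{ht}(\alpha)+\mathrm{ht}(\beta)$ that $E_\alpha F_\beta-F_\beta E_\alpha$ lies in the span of PBW monomials of total height $d_0$ strictly less than $\mathrm{ht}(\alpha)+\mathrm{ht}(\beta)$.

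The base case, $\alpha,\beta$ simple, is the relation $[E_i,F_j]=\delta_{ij}(K_i-K_i^{-1})/(q_i-q_i^{-1})$, whose right side has height $0$. For real roots, the inductive step writes $E_\alpha=T_w(E_i)$ and $F_\beta=T_{w'}(F_j)$. Applying the braid automorphisms to the simple commutator gives the claim after re-expansion, using the previous step to order the terms. A more uniform route is the $Q$-grading: a nonzero term in the commutator must have weight $p(\alpha)-p(\beta)$. Any PBW monomial of that weight containing both positive and negative root vectors of total height at least $\mathrm{ht}(\alpha)+\mathrm{ht}(\beta)$ would need to arise from a nonzero pairing contribution. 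We would control this with the Drinfeld--Damiani formula for the coproduct and the Killing-type pairing between $U_q^{\geq0}$ and $U_q^{\leq0}$, under which $[E_\alpha,F_\beta]$ lies in $\sum U^-_{\eta}U^0U^+_{\eta'}$ with $\mathrm{ht}\,\eta+\mathrm{ht}\,\eta'<\mathrm{ht}\,\alpha+\mathrm{ht}\,\beta$.

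For imaginary root vectors, we use the functional equation to reduce to the $\tilde{E}^{(i)}_{k\tilde{d}_i\delta}$ and the listed commutation relations (\ref{commfor}) and onward, whose right sides visibly have smaller height. Once the filtration is multiplicative and every commutator correction is strictly lower, the associated graded relations follow immediately.
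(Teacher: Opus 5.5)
Your overall strategy is the one the paper uses. Its proof simply invokes Beck--Kac, Prop.~1.8, on top of the twisted PBW theorem and the Levendorskii--Soibelman-type relation of Theorem~\ref{TwistPBW}. Your treatment of the same-sign relations is exactly that argument: the correction terms are supported strictly between $\beta$ and $\alpha$, hence lexicographically smaller, and $\Omega$ gives the negative part.

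The gap is in the mixed relation $E_\alpha E_{-\beta}=E_{-\beta}E_\alpha$. You correctly identify it as the crux, but you do not prove it.
\begin{itemize}
\item \emph{Braid-group induction.} This step fails. In general $E_\alpha=T_w(E_i)$ and $F_\beta=T_{w'}(F_j)$ with $w\neq w'$, so no single automorphism carries $[E_i,F_j]$ to $[E_\alpha,F_\beta]$. Moreover, the $T_i$ do not preserve $U_q^{\pm}$ (e.g.\ $T_i(E_i)=-F_iK_i$), so they cannot transport a height bound.
\item \emph{Weight argument.} The weight $p(\alpha)-p(\beta)$ is shared by monomials $F_{\beta+\gamma}KE_{\alpha+\gamma}$ of arbitrarily large total height. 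What is needed is a bound on the bidegree, and your pairing/coproduct route for this is only announced.
\item \emph{Imaginary root vectors.} The listed relations only cover $E^{(i)}_{r\delta}$ against $F_{s\delta\pm\alpha_j}$ with $\alpha_j$ simple, and against $F^{(j)}_{s\delta}$. Pairs involving real roots $\pm\alpha+k\delta$ with $\alpha$ non-simple are not treated.
\end{itemize}

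The missing step is elementary and uniform. Write $U^{\pm}_{\nu}$ for the weight spaces of $U_q^{\pm}$. From $E_iF_j-F_jE_i=\delta_{ij}(K_i-K_i^{-1})/(q_i-q_i^{-1})$ and induction on the length of a monomial in the $E_i$, one gets, for $x\in U^+_\mu$ and $y\in U^-_{-\eta}$,
\[
xy-yx\in\sum U^-_{-\eta'}\,U_q^0\,U^+_{\mu'},\qquad 0\le\eta'\le\eta,\quad 0\le\mu'\le\mu,\quad (\eta',\mu')\neq(\eta,\mu).
\]
Every root vector, real or imaginary, is homogeneous. Hence every PBW monomial spanning $U^-_{-\eta'}U_q^0U^+_{\mu'}$ has
\[
d_0=\mathrm{ht}\,\eta'+\mathrm{ht}\,\mu'<\mathrm{ht}\,\eta+\mathrm{ht}\,\mu .
\]
Taking $x=E_\alpha$ and $y=F_\beta$ gives the mixed relation in $Gr\,U_q(\hat{\g}^{(r)})$ for all $\alpha,\beta\in\tilde{\Delta}_+$ at once, with no braid operators, pairing, or case analysis.

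Combine this with your same-sign relations and the usual induction over the finitely many PBW monomials of a fixed bidegree. That also yields the multiplicativity $U_sU_t\subseteq U_{s+t}$ that your first paragraph requires.
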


\dem It follows from Theorem \ref{TwistPBW} and Proposition 1.8 in \cite{BK}. \findem

\section{Imaginary Verma modules}

Consider the natural partition $\Delta = S \cup (-S)$ where  $S= \{ \alpha + k\delta | \alpha\in \Delta_{0,+}, k\in \Z   \} \cup \{ k\delta | k>0   \}$. We are going to define Verma-like modules for this partition, which, in this case, are called twisted imaginary Verma modules. \\

Let $U_q(\pm S)$ be the subalgebra of $U_q(\hat{\g}^{(r)})$ generated by $\{E_{\beta} | \beta \in \pm S\}$, i.e., $\{ E_{\alpha + k\delta} | \alpha\in \Delta_{0,+}, k\in \Z   \} \cup \{ E_{k\delta}^{(i)} | k>0, \ 1 \leq i \leq N   \}$. Let $B_q$ be the subalgebra of $U_q(\hat{\g}^{(r)})$ generated by $\{E_{\beta} | \beta \in S\} \cup U_q^0(\hat{\g})$.\\

For $\lambda\in P$, (integral weight lattice) we say that a weight module $V_q$ for $U_q(\hat{\g}^{(r)})$ is an $S$-highest weight module with highest weight $\lambda$ if there is some non-zero vector $v\in V_q$ such that $u\cdot v = 0$ for all $u \in U_q(+S) \setminus (\C(q)^{*}\cdot 1)$ and $V=U_q(\hat{\g}^{(r)})\cdot v$.\\

We make $\C(q)$ a 1-dimensional $B_q$-module of weight $\lambda \in P$ by picking a generator $v$ and setting $E_\beta \cdot v = 0$ for $\beta \in S$, $K_i^{\pm}\cdot v = q_i^{\pm \lambda(h_i)}v$ for all $i\in I$, $\gamma^{\pm 1/2}\cdot v = q^{\pm \lambda(c)/2}v$, and $D^{\pm 1}\cdot v = q^{\lambda(d)}v$.

\begin{defi}
    The induced module $M_q(\lambda) := U_q(\hat{\g}^{(r)}) \otimes_{B_q} \C(q)\cdot v$ is called a twisted quantum imaginary Verma module.
\end{defi}

The module $M_q(\lambda)$ is an $S$-highest weight $U_q(\hat{\g}^{(r)})$-module, but we cannot say that it is isomorphic to $U_q^{-}(S)\cdot v$ since we do not have a PBW theorem for imaginary partitions associated with twisted quantum affine algebras. However, we can construct a particular basis of $M_q(\lambda)$ as a $\C(q)$-vector space.

\begin{thm}\label{basisMq(l)}
    As a $\C(q)$-vector space, the twisted imaginary Verma module $M_q(\lambda)$ is isomorphic to the space spanned by the ordered monomials $\prod_i E_{-\alpha_i-l_i\delta} \prod_j E_{-k\delta}^{n_k} \prod_s E_{-\beta_s + m_s\delta}$ for $\alpha_i, \beta_s \in \Delta_{0,+}$, $l_i\geq 0$, $k_j, m_s>0$.
\end{thm}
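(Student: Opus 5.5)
The plan is to combine the rephrased PBW theorem (Theorem \ref{PBWBasisbasic}) with a triangular-type decomposition adapted to $S$. I will show that the multiplication map
\[
U_q(-S)\otimes U_q^0(\hat{\g}^{(r)})\otimes U_q(S)\longrightarrow U_q(\hat{\g}^{(r)})
\]
is a linear isomorphism. Here $U_q(-S)$ is taken to mean the span of ordered monomials $Y_1\,Y_2\,X_3$, where $Y_1$ involves roots in $B_1$, $Y_2$ roots in $B_2$, and $X_3$ is the image under $\Omega$ of monomials in $B_3$. Note that $-S=B_1\cup B_2\cup(-A_3)$ and $-A_3=\{\alpha-k\delta\}=B_3$ as sets of roots, but $E_{\alpha-k\delta}$ is a negative root vector $F_{-\alpha+k\delta}$. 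Granting this isomorphism, $M_q(\lambda)=U_q(\hat{\g}^{(r)})\otimes_{B_q}\C(q)v$ is free over the first factor, and the basis is exactly the set of ordered monomials in the statement.

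\textbf{Step 1: spanning.} Start from the basis $Y_1Y_2Y_3ZX_3X_2X_1$ of Theorem \ref{PBWBasisbasic}. Acting on $v$, the factor $X_2X_1$ kills $v$ unless it is $1$, since $A_1\cup A_2\subset S$. The problem is $X_3$: it consists of positive root vectors $E_{-\alpha+k\delta}$, whose roots lie in $-S$, and these must be moved to the left past $Z$ and past the $Y$'s. So I instead rewrite every element as a combination of monomials ordered as $(\text{negatives in }B_1)(\text{negatives in }B_2)(\text{elements }E_{-\alpha+k\delta})\,Z\,(\text{roots in }B_3\text{ i.e. }F_{-\alpha+k\delta}\text{-type, which lie in }S)(A_2)(A_1)$. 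To get this, I proceed by induction on the total height degree $d$, using the filtration and Proposition \ref{KacBeckCommRel}. In the associated graded algebra, positive and negative root vectors commute, and root vectors of one sign $q$-commute. Hence any reordering holds modulo terms of strictly smaller degree, and those terms are handled by induction. The $K$'s are moved using the $q$-commutation with $E_\beta$. The lower-order terms created in commutators, coming from Theorem \ref{TwistPBW} and from the mixed relations (\ref{commfor})-type formulas, are again PBW monomials of smaller degree. After reordering, every factor from $S$ standing on the right kills $v$, and $Z$ acts by scalars. This gives spanning by the stated monomials.

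\textbf{Step 2: linear independence.} For this I use the graded algebra. The reordered monomials $Y_1Y_2X_3'ZY_3'X_2X_1$ (mixed order) are related to the standard PBW basis by a unitriangular change with respect to the order by $d$, since in $Gr$ they agree up to powers of $q$. Hence they also form a basis of $U_q(\hat{\g}^{(r)})$. The left ideal $U_q(\hat{\g}^{(r)})\cdot\{E_\beta,\ K-\chi_\lambda(K)\}$ is then spanned by the monomials having a nontrivial right factor in $S$, or a $Z$ of the form $K-\chi_\lambda(K)$. So the quotient has as basis the monomials whose right part is trivial, which is exactly the claimed set.

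I expect the main obstacle to be Step 1's control of lower-order terms when positive vectors $E_{-\alpha+k\delta}$ are commuted past negative imaginary and real vectors. In the twisted case, the commutators $[E_{r\delta}^{(i)},F_{\cdots}]$ produce factors $K_{r\delta}$ together with root vectors that may lie on either side of the partition, depending on whether $r\ge s$. I would first verify, case by case using (\ref{commfor}) and its companion formulas, that every such term is a PBW monomial of strictly smaller total height. Then the induction on $d$ closes, and $K_{r\delta}$ acts by the scalar $q^{r\lambda(c)}$-type factors.
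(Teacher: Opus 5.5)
Your Step 1 and the first half of Step 2 are sound. By Proposition \ref{KacBeckCommRel}, the mixed-order monomials $Y_1Y_2X_3\,Z\,Y_3X_2X_1$ are triangular, with invertible diagonal, against the PBW basis of Theorem \ref{PBWBasisbasic}. Hence they form a basis of $U_q(\hat{\g}^{(r)})$, and applying them to $v$ gives spanning by the monomials in the statement. This is the same mechanism as the paper's proof, which rewrites $Y_3X_3v_\lambda$ as $X_3Y_3v_\lambda$ plus terms of lower total height and then inducts. Note that the paper's argument stops at spanning. There is also a small slip: $-S=B_1\cup B_2\cup A_3$, while $B_3=-A_3\subset S$; you use the correct version later.

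The gap is in the passage to linear independence in $M_q(\lambda)=U_q(\hat{\g}^{(r)})\otimes_{B_q}\C(q)v$.

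Write $\mathcal{L}$ and $\mathcal{R}$ for the spans of the ordered monomials $Y_1Y_2X_3$ and $Y_3X_2X_1$. The decomposition $U_q=\mathcal{L}\otimes U_q^0\otimes\mathcal{R}$ gives $M_q(\lambda)\cong\mathcal{L}$ only if $B_q=U_q^0\mathcal{R}$, that is, only if $U_q^0\mathcal{R}$ is closed under multiplication. Otherwise $B_q$ contains elements with components $LZR$ where $L\neq 1$. For such $b$, the relation $b\otimes v=\chi(b)\,(1\otimes v)$ can impose linear relations among the vectors $L\otimes v$.

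Your assertion that $U_q\ker\chi$ is spanned by the $LZR$ with $R\neq 1$ together with the $L(Z-\chi(Z))$ is this closure property in disguise. The inclusion $\subseteq$ says exactly that this span is a left ideal, and none of your tools proves it:
\begin{itemize}
\item The convexity relation in Theorem \ref{TwistPBW} handles only same-sign pairs. It shows that the spans of the $X_2X_1$ and of the $Y_3$ are subalgebras, but says nothing about how they interact.
\item Formula (\ref{commfor}) and its companion treat only imaginary vectors against root vectors $\pm\alpha_j+k\delta$ with $\alpha_j$ simple.
\item For $E_{\alpha'+l\delta}$ with $\alpha'+l\delta\in A_1$ against $E_{\alpha-k\delta}=F_{-\alpha+k\delta}$, Proposition \ref{KacBeckCommRel} says only that the commutator has lower total height. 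A term such as $F^{(j)}_{m\delta}E_{\alpha+\alpha'+(l-k+m)\delta}$, whose first factor lies in $-S$, has the right weight and, for suitable $m$, strictly lower total height. So it is not excluded.
\end{itemize}

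This is precisely the ``PBW theorem for imaginary partitions'' that the paper says is unavailable. Closing the gap needs an extra input. One option is a Drinfeld-type triangular decomposition identifying $U_q^0\mathcal{R}$ with the subalgebra generated by $U_q^0$, the $x^+_{i,k}$ ($k\in\Z$) and the $h_{i,l}$ ($l>0$). Another is a mixed convexity statement for pairs in $(A_1\cup A_2)\times B_3$.

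The obstacle you flag at the end is therefore misplaced. The lower-order terms in Step 1 are handled automatically by the graded algebra. The real difficulty is the freeness of $U_q(\hat{\g}^{(r)})$ as a right $B_q$-module on the monomials spanning $\mathcal{L}$.
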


\dem Let $v_{\lambda}$ be the canonical generator of $M_q(\lambda)$ as an $S$-highest weight $U_q(\hat{\g}^{(r)})$-module such that $M_q(\lambda) = U_q(\hat{\g}^{(r)})\cdot v_{\lambda}$. Then, for arbitrary $w \in M_q(\lambda)$ we will have $w = u\cdot v_{\lambda}$ for some $u\in U_q(\hat{\g}^{(r)})$, which can be written as $u = \sum Y_1Y_2Y_3ZX_3X_2X_1$ thanks to Theorem \ref{PBWBasisbasic}.\\

By definition of $M_q(\lambda)$, we have that $X_1\cdot v_\lambda = 0 = X_2\cdot v_\lambda$. Moreover, $Z$ commutes with $Y_3$, up to scalars, and acts on $v_{\lambda}$ by a scalar. So, we have that $w$ has the form $w = \sum Y_1Y_2Y_3X_3 \cdot v_\lambda$. We are going to show that the generating monomials of $M_q(\lambda)$ are of the form $Y_1Y_2X_3\cdot v_{\lambda}$ by induction on the total height degree of $X_3$.\\

First of all, note that the elements of the form $Y_3$ act by zero on $v_\lambda$. Let us consider an element $Y_3X_3$, which is of the form $\cdots E_{\alpha-l\delta}E_{-\beta + m\delta}\cdots$ for $l,m >0$ and $\alpha, \beta \in \Delta_{0,+}$. So, thanks to Proposition \ref{KacBeckCommRel}, we have that 

\begin{equation}\label{indheight}
    E_{\alpha-l\delta}E_{-\beta + m\delta} = q^{-(-\beta + m\delta | \alpha-l\delta)} E_{-\beta + m\delta}E_{\alpha-l\delta} + \sum x_{\gamma}E_{\gamma}
\end{equation}

for $x_\gamma\in \C[q,q^{-1}]$ scalars, and the elements $E_{\gamma}$ of lower degrees with $-\beta + m\delta \prec \gamma \prec \alpha -l\delta$.\\

If $d_0(X_3)=1$, then there is just one simple root involved in it, and there are no lower terms in equation \ref{indheight}, so we are done. Hence we will assume the theorem is true for $X_3$ up to total height degree $d$.\\

Let $X_3$ be of the form $E_{-\beta_1+m_1\delta} \cdots E_{-\beta_s + m_s\delta}$ and let $Y_3$ of the form $E_{\alpha_1-n_1\delta} \cdots E_{\alpha_\ell - n_\ell\delta}$ for roots $-\beta_i + m_i\delta \in A_3$ and $\alpha_j - n_i\delta \in B_3$. Then, by Theorem \ref{TwistPBW} we have 

\begin{align*}
    Y_3X_3 &= E_{-\beta_1+m_1\delta} \cdots E_{-\beta_s + m_s\delta}E_{\alpha_1-n_1\delta} \cdots E_{\alpha_\ell - n_\ell\delta} \\
    &= E_{-\beta_1+m_1\delta} \cdots E_{\alpha_1-n_1\delta}E_{-\beta_s + m_s\delta} \cdots E_{\alpha_\ell - n_\ell\delta} \\
    &+ \sum_{-\beta_s + m_s\delta \prec \gamma_1 \prec \cdots \prec \gamma_r \prec \alpha_1-n_1\delta} x_\gamma E_{\gamma_1} \cdots E_{\gamma_r}
\end{align*}

where the terms in the sum are of lower degrees. Iterating this process, we get that 

$$ Y_3X_3 = X_3Y_3 + (\mbox{lower degree terms}) $$

And so, acting on $v_\lambda$ we get an expression of the form 

$$ Y_3X_3\cdot v_\lambda = (\mbox{lower degree terms})\cdot v_\lambda $$

By induction, the lower degree terms can be reordered in the desired form, getting rid of the terms of the form $Y_3$, because they act on $v_\lambda$ by zero, and just surviving the expression consisting of terms of the form $X_3$.\\

The above arguments then prove that if  $w = \sum Y_1Y_2Y_3X_3 \cdot v_\lambda$ we can reduce the monomials in the expression to get $w = \sum Y_1Y_2X_3 \cdot v_\lambda$ as desired.

\findem

\section{$\mathbb{A}$-forms and Classical limits}

In this section, we construct an $\mathbb{A}$-form for the twisted quantum affine algebra and its imaginary Verma modules. We apply this to compute its classical limits, showing that they coincide with imaginary Verma modules associated with the twisted affine Lie algebra.\\ 

Recall that for any $i\in I_0$, $s\in \Z$, and $k\in \Z^{+}$, the Lusztig elements are defined by 

$$ \begin{bmatrix} K_i \ ;\  s \\ k \end{bmatrix}  = \prod_{r=1}^k \frac{K_iq_i^{s-r+1} - K_i^{-1}q_i^{r-s-1}}{q_i^{r} - q_i^{-r}}$$

$$ \begin{bmatrix} D \ ;\  s \\ k \end{bmatrix}  = \prod_{r=1}^k \frac{Dq_0^{s-r+1} - D^{-1}q_0^{r-s-1}}{q_0^{r} - q_0^{-r}}$$

Let $\mathbb{A} = \C[q, q^{-1}, \frac{1}{[k]_{q_i}}, i \in I_0, k>0]$ and define the $\A$-form $U_{\A}$ of $U_q(\hat{\g}^{(r)})$ as the $\A$-subalgebra of $U_q(\hat{\g}^{(r)})$ generated by the elements $E_i$, $F_i$, $K_\alpha^{\pm 1}$, $\begin{bmatrix} K_i \ ;\  0 \\ 1 \end{bmatrix}$ and $\begin{bmatrix} D \ ;\  0 \\ 1 \end{bmatrix}$. It is known that all Lusztig elements belong to $U_{\A}$ and for all $i,j \in I_0$, $s\in \Z$ and $n\in \Z^{+}$ they satisfy the following relations (see \cite{FGM}, Proposition 4.1)

\begin{align*}
    E_i\begin{bmatrix} K_j \ ;\  s \\ k \end{bmatrix} = \begin{bmatrix} K_j \ ;\  s-a_{ij} \\ k \end{bmatrix}E_i \\
    \begin{bmatrix} K_j \ ;\  s \\ k \end{bmatrix}F_i = F_i \begin{bmatrix} K_j \ ;\  s-a_{ij} \\ k \end{bmatrix} \\
    E_i\begin{bmatrix} D \ ;\  s \\ k \end{bmatrix} = \begin{bmatrix} D \ ;\  s-\delta_{i,0} \\ k \end{bmatrix}E_i \\
    \begin{bmatrix} D \ ;\  s \\ k \end{bmatrix}F_i = F_i \begin{bmatrix} D \ ;\  s-\delta_{i,0} \\ k \end{bmatrix} \\
    E_iF_j \neq F_jE_i, \quad \mbox{ for } i\neq j\\
    E_iF_i^n = F_i^nE_i + F_i^{n-1}\sum_{r=0}^{n-1}\begin{bmatrix} K_i \ ;\  -2r \\ 1 \end{bmatrix}
\end{align*}

Set $U_{\A}^{+}$ and $U_{\A}^{-}$ to be the subalgebras of $U_{\A}$ generated by $E_i$ and $F_i$, $i\in I_0$, respectively. Let $U^0_{\A}$ be the subalgebra of $U_{\A}$ generated by the elements $K_i^{\pm}$ ($i\in I_0$), $D^{\pm}$, $\begin{bmatrix} K_i \ ;\  0 \\ 1 \end{bmatrix}$ and $\begin{bmatrix} D \ ;\  0 \\ 1 \end{bmatrix}$.   

\begin{prop}
    The subalgebra $U_{\A}$ has the triangular decomposition $U_{\A} = U_{\A}^{-}\otimes U_{\A}^0 \otimes U_{\A}^{+}$.
\end{prop}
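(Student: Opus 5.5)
We prove the statement in two halves. First, the multiplication map $U_{\A}^{-}\otimes U_{\A}^0\otimes U_{\A}^{+}\to U_{\A}$ is surjective. Second, it is injective.

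\emph{Surjectivity.} We show that the $\A$-span $U_{\A}^{-}U_{\A}^0U_{\A}^{+}$ is a subalgebra containing all generators, hence equals $U_{\A}$. It contains $1$ and every generator. So it suffices to show it is stable under left multiplication by the generators $E_i$, $F_i$, $K_\alpha^{\pm1}$, $\begin{bmatrix} K_i \ ;\  0 \\ 1 \end{bmatrix}$ and $\begin{bmatrix} D \ ;\  0 \\ 1 \end{bmatrix}$.
\begin{enumerate}
\item For $F_i$ this is immediate.
\item For the Cartan-type generators we move them past $U_{\A}^{-}$ using the commutation relations with Lusztig elements listed above (from \cite{FGM}, Proposition 4.1). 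A Lusztig element passes a product of $F$'s by shifting $s$, and the shifted element still lies in $U_{\A}^0$. We also use $K_\alpha F_i=q^{-(\alpha|\alpha_i)}F_iK_\alpha$.
\item For $E_i$ we must move it past a monomial $F_{j_1}\cdots F_{j_m}$. When $i\neq j$, the relation $E_iF_j=F_jE_i$ holds; the displayed ``$\neq$'' is a misprint, and the defining relation $E_iF_j-F_jE_i=\delta_{ij}(\cdots)$ gives equality. When $i=j$, the commutator is $\begin{bmatrix} K_i \ ;\  0 \\ 1 \end{bmatrix}\in U_{\A}^0$. Induction on $m$, together with the formula for $E_iF_i^n$, then writes $E_iF_{j_1}\cdots F_{j_m}$ as an element of $U_{\A}^{-}U_{\A}^0U_{\A}^{+}$. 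Every coefficient here lies in $\A$; in fact they lie in $\C[q,q^{-1}]$.
\end{enumerate}
Hence the span is closed under multiplication and equals $U_{\A}$.

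\emph{Injectivity.} Injectivity over $\A$ reduces to injectivity over $\C(q)$, because $\A\subset\C(q)$ is a domain and each of the three factors is a torsion-free $\A$-module. Consider the composite
\[
U_{\A}^{-}\otimes_{\A}U_{\A}^0\otimes_{\A}U_{\A}^{+}\to U_q^{-}\otimes U_q^0\otimes U_q^{+}\to U_q(\hat{\g}^{(r)}).
\]
The second map is an isomorphism by Theorem~\ref{TwistPBW}, i.e.\ the triangular decomposition of $U_q(\hat{\g}^{(r)})$. It therefore suffices that the first map is injective, which follows once each $U_{\A}^{\bullet}$ is a free $\A$-module whose tensor product embeds.

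\begin{enumerate}
\item For $U_{\A}^0$ we use the standard basis of products $K_\alpha$ times products of Lusztig elements. Its $\C(q)$-linear independence inside the commutative Laurent-type algebra $U_q^0$ can be checked directly.
\item For $U_{\A}^{\pm}$ we can be less explicit. They are $\A$-submodules of the $\C(q)$-spaces $U_q^{\pm}$. Each weight space is finitely generated over $\A$, because it is spanned by finitely many monomials in the generators. Since $\A$ is a localization of a PID, namely of $\C[q,q^{-1}]$, a finitely generated torsion-free module over it is free. Thus each weight space is free, and the weight decomposition makes $U_{\A}^{\pm}$ free.
\end{enumerate}
Tensor products of free $\A$-modules inject into the corresponding $\C(q)$-tensor products, which gives injectivity.

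The main obstacle is the surjectivity step for $E_i$ against long $F$-monomials. One must check that all the correction terms stay in $U_{\A}$, and in particular that the $D$-Lusztig elements are produced correctly when $i=0$. We handle this by a careful induction on $m$ using the displayed $E_iF_i^n$ formula and the shift rules for $\begin{bmatrix} K_j \ ;\  s \\ k \end{bmatrix}$ and $\begin{bmatrix} D \ ;\  s \\ k \end{bmatrix}$.
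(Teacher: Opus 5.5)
Your proposal is correct. For the spanning half it is the paper's argument: the paper's entire proof is the one line ``it follows from the above relations on generators,'' and you carry it out in detail, including the correct observation that the displayed $E_iF_j\neq F_jE_i$ should read $=$.

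The injectivity half, which the paper leaves implicit, you correctly reduce to the triangular decomposition of $U_q(\hat{\g}^{(r)})$ over $\C(q)$. For $U_{\A}^0$ you need no explicit basis: your ``products $K_\alpha$ times products of Lusztig elements'' are not independent as literally stated, since $K_i-K_i^{-1}=(q_i-q_i^{-1})\begin{bmatrix} K_i \ ;\ 0 \\ 1 \end{bmatrix}$. It suffices that $U_{\A}^0$, being an $\A$-submodule of a $\C(q)$-space, is torsion-free and hence flat over the PID $\A$, which is all that step uses.
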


\dem It follows from the above relations on generators. \findem

For $\lambda \in P$ let $M_q(\lambda)$ the twisted imaginary Verma module with generator $v_\lambda$. We define $M_{\A}(\lambda) := U_{\A}\cdot v_{\lambda}$ and we call it the $\A$-form $M_{\A}(\lambda)$. In other words, it is the $U_{\A}$-submodule of $M_q(\lambda)$ generated by $v_\lambda$.

\begin{prop}\label{BasesM_A}
    As a vector space over $\A$, the module $M_{\A}(\lambda)$ is spanned by the ordered monomials $\prod_i E_{-\alpha_i-l_i\delta} \prod_j E_{-k\delta}^{n_k} \prod_s E_{-\beta_s + m_s\delta}$ for $\alpha_i, \beta_s \in \Delta_{0,+}$, $l_i\geq 0$, $k_j, m_s>0$.
\end{prop}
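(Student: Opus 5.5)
The plan is to redo the argument of Theorem \ref{basisMq(l)}, but over $\A$ rather than $\C(q)$. Every rewriting step must then produce coefficients in $\A$, and the root vectors used must already lie in $U_{\A}$.

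\textbf{Step 1: the root vectors lie in $U_{\A}$.} The real root vectors $E_{\beta_r}$ are images of Chevalley generators under products of the braid operators $T_i^{\pm1}$. The formulas for $T_i$ involve only divided powers $E_i^{(s)}, F_i^{(s)}$ and powers of $K_i$. Since $\A$ contains $[k]_{q_i}^{-1}$, divided powers lie in $U_{\A}$, so $U_{\A}$ is stable under $T_i^{\pm1}$. Hence $E_{\pm\beta}\in U_{\A}$ for every real root $\beta$.

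For the imaginary vectors, $\tilde E^{(i)}_{k\tilde d_i\delta}=q_i^{-2}E_iE_{k\tilde d_i\delta-\alpha_i}-E_{k\tilde d_i\delta-\alpha_i}E_i$ lies in $U_{\A}$. The functional equation expresses $E^{(i)}_{k\tilde d_i\delta}$ as a polynomial in the $\tilde E$'s whose coefficients come from the logarithm series: they are of the form $(q_i-q_i^{-1})^{m-1}/m$ with $m\le k$, and are therefore in $\A$ (here $1/m\in\C$). Applying $\Omega$ gives the negative root vectors $F^{(i)}_{k\tilde d_i\delta}$.

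Step 1 shows that all ordered monomials from the statement, applied to $v_\lambda$, lie in $M_{\A}(\lambda)$. This gives one inclusion.

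\textbf{Step 2: the reverse inclusion.} By the triangular decomposition $U_{\A}=U_{\A}^-\otimes U_{\A}^0\otimes U_{\A}^+$, and since $U^0_{\A}$ acts on $v_\lambda$ by scalars in $\A$, we get
$$M_{\A}(\lambda)=U^-_{\A}U^+_{\A}\cdot v_\lambda.$$
I would then use an integral version of Theorem \ref{PBWBasisbasic}: $U^{\pm}_{\A}$ is spanned over $\A$ by the ordered PBW monomials. This is Damiani's integrality statement; the commutation coefficients $c_{\underline\gamma}$ in Theorem \ref{TwistPBW} lie in $\C[q,q^{-1}]\subset\A$. With this, any element of $M_{\A}(\lambda)$ is an $\A$-combination of $Y_1Y_2Y_3ZX_3X_2X_1\cdot v_\lambda$.

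The reduction in the proof of Theorem \ref{basisMq(l)} then applies verbatim:
\begin{itemize}
\item $X_1$ and $X_2$ kill $v_\lambda$;
\item $Z$ contributes a scalar in $\A$;
\item $Y_3X_3$ is reordered using Theorem \ref{TwistPBW}, which introduces only $\C[q,q^{-1}]$ coefficients and terms of lower total height degree;
\item induction on $d_0(X_3)$ eliminates the $Y_3$ factors, since they act by zero on $v_\lambda$.
\end{itemize}
Every coefficient produced along the way stays in $\A$, so $M_{\A}(\lambda)$ is the $\A$-span of the monomials $Y_1Y_2X_3\cdot v_\lambda$.

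\textbf{Main obstacle.} The hard step is the integral PBW spanning for $U^\pm_{\A}$, that is, showing that ordered monomials span $U^-_{\A}$ over $\A$ and not only over $\C(q)$. I would first try to cite Damiani's integral-form results (or Beck's in the untwisted case). Failing that, I would argue by induction on the convex order, as follows. Any word in the Chevalley generators is reordered by repeatedly applying the $q$-commutation relation of Theorem \ref{TwistPBW}, whose coefficients are in $\C[q,q^{-1}]$. Termination is guaranteed by the filtration behind Proposition \ref{KacBeckCommRel}. The only possible non-integral denominators come from the imaginary vectors, and these are absorbed by the inverses of quantum integers included in $\A$.
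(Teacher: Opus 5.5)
Your proposal is correct and follows essentially the same route as the paper: rerun the proof of Theorem~\ref{basisMq(l)} over $\A$, with the $U^0_{\A}$-part contributing scalars in $\A$ through the Lusztig elements, and with all straightening coefficients staying in $\A$. (Strictly speaking, reordering $Y_3X_3$ uses mixed $E$--$F$ commutators rather than Theorem~\ref{TwistPBW} alone, so Cartan terms appear; inside $U_{\A}$ these lie in $U^0_{\A}$ and again act by $\A$-scalars, and the paper glosses this point in the same way.) You are in fact more explicit than the paper, which tacitly assumes two things: that the ordered monomials lie in $M_{\A}(\lambda)$ (your Step~1, via braid-group stability of $U_{\A}$ and the logarithm formula for the imaginary vectors), and that $U^{\pm}_{\A}$ is $\A$-spanned by ordered PBW monomials (your ``main obstacle'').
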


\dem The proof follows the same line of reasoning as the proof of Theorem \ref{basisMq(l)}. We just need to check that the coefficients belong to the ring $\A$. Let $u\in U_{\A}$ be an arbitrary element. Then we can write it as a sum $u = \sum Y_1Y_2Y_3ZX_3X_2X_1$ where $X_i$ and $Y_i$ are as in Theorem \ref{basisMq(l)} but now $Z$ is in $U_{\A}^0$.\\

Consider the element $u\cdot v_\lambda \in M_{\A}(\lambda)$. By definition, we have that $X_1\cdot v = X_2 \cdot v = 0$. Moreover, since $Z\in U_{\A}^0$, by the above commuting relations, it commutes with $X_3$ and we get $u \cdot v_\lambda = \sum Y_1Y_2Y_3X_3Z\cdot v_\lambda$. Because $K_i^{\pm 1}\cdot v_\lambda = q^{\pm \lambda(h_i)}v_\lambda$, $D^{\pm 1}\cdot v_\lambda = q^{\pm \lambda(d)}v_\lambda$,  $\begin{bmatrix} K_i \ ;\  s \\ k \end{bmatrix}   \cdot v_\lambda = \begin{bmatrix} \lambda(h_i) + s\\ k \end{bmatrix}_{q_i} v_\lambda$ and $\begin{bmatrix} D \ ;\  s \\ k \end{bmatrix}   \cdot v_\lambda = \begin{bmatrix} \lambda(d) + s\\ k \end{bmatrix}_{q_0} v_\lambda$, and all the resulting coefficients are in the ring $\A$, we have that $Z\cdot v_\lambda \in \A\cdot v_{\lambda}$.\\

The rest of the proof follows that of Theorem \ref{basisMq(l)}, just noticing that the coefficients that appear in lower degree terms are all in $\A$.

\findem

\begin{prop}\label{CoeffM_AM_q}
    As $\C(q)$-vector spaces, $\C(q)\otimes_{\A}M_{\A}(\lambda) \cong M_q(\lambda)$.
\end{prop}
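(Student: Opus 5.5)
The plan is to construct the natural map
\[
\mu:\C(q)\otimes_{\A}M_{\A}(\lambda)\to M_q(\lambda),\qquad f\otimes w\mapsto f\,w,
\]
and prove it is an isomorphism of $\C(q)$-vector spaces. It is well defined and $\C(q)$-linear because $M_{\A}(\lambda)=U_{\A}\cdot v_\lambda$ is an $\A$-submodule of $M_q(\lambda)$ and $\A\subset\C(q)$.

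Surjectivity will follow from Theorem \ref{basisMq(l)}. Every element of $M_q(\lambda)$ is a $\C(q)$-linear combination of ordered monomials
\[
\prod_i E_{-\alpha_i-l_i\delta}\,\prod_j E_{-k\delta}^{n_k}\,\prod_s E_{-\beta_s+m_s\delta}\cdot v_\lambda .
\]
So it suffices to check that each such vector lies in $M_{\A}(\lambda)$, i.e.\ that every root vector $E_\beta$, $\beta\in\tilde\Delta$, lies in $U_{\A}$. For real root vectors I would use that the braid operators $T_i^{\pm1}$ preserve $U_{\A}$: the formulas for $T_i$ involve only divided powers, and these are in $U_{\A}$ because $[k]_{q_i}^{-1}\in\A$. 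For the imaginary vectors, $\tilde E^{(i)}_{k\tilde d_i\delta}$ is a $q$-commutator of elements of $U_{\A}$. The $E^{(i)}_{k\tilde d_i\delta}$ are then obtained from the exponential functional equation, and solving it recursively only requires dividing by integers $k$ and by $(q_i-q_i^{-1})$ factors that cancel. This is the standard argument from the untwisted case (\cite{CFM01}), giving the needed membership in $U_{\A}$ with $\A$ as defined.

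Injectivity is the heart of the argument. By Proposition \ref{BasesM_A}, $M_{\A}(\lambda)$ is spanned over $\A$ by the same ordered monomials applied to $v_\lambda$. Hence every element of $\C(q)\otimes_{\A}M_{\A}(\lambda)$ can be written as $\sum_m f_m\otimes m\cdot v_\lambda$ with $f_m\in\C(q)$ and $m$ ranging over ordered monomials. If its image $\sum f_m\, m\cdot v_\lambda$ is zero in $M_q(\lambda)$, then all $f_m=0$, since the ordered monomials applied to $v_\lambda$ are $\C(q)$-linearly independent. This independence is the basis assertion of Theorem \ref{basisMq(l)}. Thus $\mu$ is injective, and moreover these monomials form an $\A$-basis of $M_{\A}(\lambda)$, since $M_{\A}(\lambda)$ is torsion-free as a subset of a $\C(q)$-space.

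The main obstacle is the linear independence. The proof of Theorem \ref{basisMq(l)} as written only exhibits a spanning set. If that is all we may use, I would supply independence by a PBW argument: $M_q(\lambda)\cong U_q(\hat\g^{(r)})\otimes_{B_q}\C(q)v_\lambda$, and $U_q(\hat\g^{(r)})$ is a free right $B_q$-module on the ordered monomials in $B_1,B_2,B_3$ (from Theorem \ref{PBWBasisbasic}, after reordering $Y_3$ past $X_3$ using the filtration of Proposition \ref{KacBeckCommRel}). The reordering has to be justified at the level of the associated graded algebra, where the relations are $q$-commutation relations, and then lifted by a leading-term argument. The secondary technical point is verifying that the imaginary root vectors lie in $U_{\A}$, which is where the localization at $[k]_{q_i}$ is used.
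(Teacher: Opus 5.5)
Your argument is essentially the paper's: the paper uses the same natural map $f\otimes w\mapsto fw$ and inverts it by $B\cdot v_\lambda\mapsto 1\otimes B\cdot v_\lambda$ on the monomial basis of Theorem \ref{basisMq(l)}, with well-definedness coming from Proposition \ref{BasesM_A}, so your surjectivity/injectivity split rests on exactly the same two inputs, and you usefully make explicit two things the paper leaves tacit (that root vectors lie in $U_{\A}$, and that the argument really uses the linear independence claimed in Theorem \ref{basisMq(l)}, whose proof only establishes spanning). One caution about your fallback for independence: freeness of $U_q(\hat{\g}^{(r)})$ as a right $B_q$-module on the ordered $(-S)$-monomials (whose root vectors come from $B_1$, $B_2$ and $A_3$; your $B_3$ lies in $S$) also needs $B_q$ itself to be spanned by ordered monomials in $S$-root vectors, i.e.\ a PBW theorem for $B_q$, which reordering $Y_3$ past $X_3$ does not provide and which the paper explicitly says is not available in the twisted case.
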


\dem From left to right, the map is given by $f(q) \otimes m \mapsto f(q)m$. From the other direction, if $B$ is a basis element of $M_q(\lambda)$ of the form $\prod_i E_{-\alpha_i-l_i\delta} \prod_j E_{-k\delta}^{n_k} \prod_s E_{-\beta_s + m_s\delta}$ for $\alpha_i, \beta_s \in \Delta_{0,+}$, $l_i\geq 0$, $k_j, m_s>0$, and is given by Theorem \ref{basisMq(l)}. Define $B\cdot v_\lambda \mapsto 1\otimes B\cdot v_\lambda$. This last map is well defined thanks to Proposition \ref{BasesM_A} and it is easy to see that both maps are mutually inverse. 

\findem

For the module $M_{\A}(\lambda)$ we have a weight structure defined by $M_{\A}(\lambda)_{\mu} = M_{\A}(\lambda)\cap M_q(\lambda)_\mu$. By the above Proposition, we have that, as vector spaces, $M_q(\lambda)_\mu \cong \C(q)\otimes_{\A} M_{\A}(\lambda)_{\mu}$.

\begin{prop}
    $M_{\A}(\lambda) = \displaystyle \bigoplus_{\mu \in P} M_{\A}(\lambda)_\mu$.
\end{prop}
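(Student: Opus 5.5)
The plan is to show that $M_{\A}(\lambda)$ is spanned over $\A$ by weight vectors that already lie in $M_{\A}(\lambda)$. Once that is done, the sum $\sum_\mu M_{\A}(\lambda)_\mu$ equals $M_{\A}(\lambda)$. The sum is direct because it sits inside the direct sum $M_q(\lambda)=\bigoplus_\mu M_q(\lambda)_\mu$. Recall that $M_{\A}(\lambda)_\mu = M_{\A}(\lambda)\cap M_q(\lambda)_\mu$ by definition.

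First, by Proposition \ref{BasesM_A}, every element of $M_{\A}(\lambda)$ is an $\A$-linear combination of vectors $B\cdot v_\lambda$, where
\[
B=\prod_i E_{-\alpha_i-l_i\delta}\prod_j E_{-k\delta}^{n_k}\prod_s E_{-\beta_s+m_s\delta}
\]
is an ordered monomial. Each root vector $E_\gamma$ is a weight element for the adjoint action of $U_q^0$. Concretely, $K_\alpha E_\gamma K_{-\alpha}=q^{(\alpha|\gamma)}E_\gamma$, and $D$ acts on $E_\gamma$ by $q$ raised to the $\delta$-degree of $\gamma$. This holds because the root vectors are obtained from the $E_i,F_i$ by the braid operators $T_i$ (which permute weights via $s_i$), by commutators, and by the exponential functional equation, and all of these preserve homogeneity. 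Hence $B$ is homogeneous of weight $\mathrm{wt}(B)=\sum p(\gamma)$, and $B\cdot v_\lambda$ lies in $M_q(\lambda)_{\lambda+\mathrm{wt}(B)}$.

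Second, we check that $B\cdot v_\lambda\in M_{\A}(\lambda)$, that is, $B\in U_{\A}$ up to an $\A$-scalar. The real root vectors are images of $E_i,F_i$ under the $T_i^{\pm1}$. These preserve $U_{\A}$, since their formulas involve only divided powers and $K$'s, and divided powers lie in $U_{\A}$ because $1/[k]_{q_i}\in\A$. The elements $\tilde E^{(i)}_{k\tilde d_i\delta}$ are commutators of elements of $U_{\A}$. The $E^{(i)}_{k\tilde d_i\delta}$ are obtained from the $\tilde E$'s by taking a logarithm of the generating series. That logarithm involves division by $q_i-q_i^{-1}$ and by integers $k$, and we expect this to be the main obstacle. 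I would first try to show that $(q_i-q_i^{-1})E^{(i)}_{k\tilde d_i\delta}$ times a suitable integer lies in $U_{\A}$, using the standard recursion $k\tilde E_k=\sum_{j}jE_j\tilde E_{k-j}(q_i-q_i^{-1})$. Failing that, I would fall back on the observation that the spanning set of Proposition \ref{BasesM_A} was produced as $u\cdot v_\lambda$ with $u\in U_{\A}$. In that case only the weight-homogeneity matters, and I would decompose each $u\in U_{\A}$ directly instead of the monomials.

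Indeed, the cleanest route avoids the issue. $U_{\A}$ is spanned by products of generators $E_i,F_i,K_\alpha^{\pm1}$ and Lusztig elements, each of which is a weight element. So $U_{\A}=\bigoplus_\beta (U_{\A})_\beta$, and hence $M_{\A}(\lambda)=U_{\A}v_\lambda=\sum_\beta (U_{\A})_\beta v_\lambda$. Each summand $(U_{\A})_\beta v_\lambda$ lies in $M_{\A}(\lambda)\cap M_q(\lambda)_{\lambda+\beta}$. This gives the sum decomposition, and directness follows from that of $M_q(\lambda)$, since weight spaces for distinct eigencharacters of $K_i,D$ are independent (the $q$-powers separate weights because $q$ is transcendental). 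I would present this argument, mentioning the monomial basis only as a remark.
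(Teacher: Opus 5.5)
Your final argument is correct, but it takes a different route from the paper's. The paper follows Benkart--Kang--Melville. It starts from an arbitrary $v=v_1+\cdots+v_p\in M_{\A}(\lambda)$, where the $v_j$ have distinct weights $\mu_j$. For each $i$ it builds an element $u\in U^0_{\A}$ as a product of Lusztig elements $\begin{bmatrix} K_j \ ;\ -\mu_i(h_j)+s \\ s \end{bmatrix}\begin{bmatrix} K_j \ ;\ -\mu_i(h_j)-1 \\ s \end{bmatrix}$, together with the analogous $D$-factors. This $u$ acts by $\pm1$ on weight $\mu_i$ and by $0$ on the other $\mu_k$, so the component $v_i=\pm uv$ lies in $M_{\A}(\lambda)$.

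You instead observe that $U_{\A}$ is generated by weight-homogeneous elements, so $U_{\A}=\bigoplus_\beta (U_{\A})_\beta$. Since $M_{\A}(\lambda)=U_{\A}v_\lambda$ is cyclic on the weight vector $v_\lambda$, it is the sum of the pieces $(U_{\A})_\beta v_\lambda\subseteq M_{\A}(\lambda)_{\lambda+\beta}$. Directness is inherited from $M_q(\lambda)$, with $D$ separating weights that differ by multiples of $\delta$.

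Your route is shorter. It does not need the fact that all Lusztig elements lie in $U_{\A}$, nor the evaluation of $q$-binomials on weight vectors. The paper's projector argument is more robust: it applies to any $U^0_{\A}$-stable $\A$-submodule of a weight module, without needing a weight generator.

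You were also right to set aside the monomial approach. Whether the $E^{(i)}_{k\tilde d_i\delta}$, obtained by taking a logarithm, lie in $U_{\A}$ is a genuine integrality question, and your graded argument avoids it entirely.
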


\dem The same proof for the Proposition 3.23 of \cite{BKMe} works. For the sake of completeness, we write the key steps in the proof. Let $v = v_1 + \cdots + v_p \in M_{\A}(\lambda)$ where each $v_j \in M_{q}(\lambda)_{\mu_j}$ for some weights $\mu_j \in P$. Fix an index $i$ and because $\mu_k \neq \mu_i$ there exists an index $i_k$ such that $\mu_k(h_{i_k}) \neq \mu_i(h_{i_k})$. Let $J = \{i_k | k \neq i\}$ and set $s$ such that $s\geq |\mu_k(h_j) - \mu_i(h_j)|$ and $s\geq |\mu_k(d) - \mu_i(d)|$ for all $j\in J$ and $k\neq i$. Set $u\in U_{\A}$ defined as follows

\[ u =  \prod_{j \in J} \begin{bmatrix} K_j \ ;\  -\mu_i(h_j) +s \\ s \end{bmatrix}  \begin{bmatrix} K_j \ ;\  -\mu_i(h_j) - 1 \\ s \end{bmatrix}  \begin{bmatrix} D \ ;\  -\mu_i(d) +s \\ s \end{bmatrix} \begin{bmatrix} D \ ;\  -\mu_i(d) -1 \\ s \end{bmatrix} \]

Then $uv_i = v_i$ and $uv_k = 0$ for $k\neq i$. Since $u \in U_{\A}$, we get that $v_i = uv_i \in M_{\A}(\lambda)$, so $v_i \in M_{\A}(\lambda)_{\mu_i}$.
\findem

Finally, we have the following

\begin{prop}
    For each $\mu \in P$, $M_{\A}(\lambda)_\mu$ is a free $\A$-module and $\rank_{\A} M_{\A}(\lambda)_\mu = \dim_{\C(q)} M_q(\lambda)_\mu$.
\end{prop}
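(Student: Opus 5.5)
The plan is to show that the ordered monomials of Proposition \ref{BasesM_A} of weight $\mu$, applied to $v_\lambda$, form an $\A$-basis of $M_{\A}(\lambda)_\mu$. Their number is then $\dim_{\C(q)} M_q(\lambda)_\mu$ by Theorem \ref{basisMq(l)}, which gives both freeness and the rank equality.

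First I will fix $\mu$ and denote by $\mathcal{B}_\mu$ the (finite) set of vectors $B\cdot v_\lambda$, where $B=\prod_i E_{-\alpha_i-l_i\delta} \prod_j E_{-k\delta}^{n_k} \prod_s E_{-\beta_s + m_s\delta}$ ranges over ordered monomials with $\lambda-\mathrm{wt}(B)=\mu$. Finiteness needs a word, since the $A_3$-factors have weights $-\beta_s+m_s\delta$ that are partly positive. It should follow from the weight-space finiteness of $M_q(\lambda)_\mu$ that is implicit in Theorem \ref{basisMq(l)}. Alternatively, freeness can be argued without finiteness, since $\A$-linear independence of an arbitrary family works the same way.

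The spanning step comes from Proposition \ref{BasesM_A} together with the weight decomposition $M_{\A}(\lambda)=\bigoplus_\mu M_{\A}(\lambda)_\mu$ just proved. Write $w\in M_{\A}(\lambda)_\mu$ as an $\A$-combination of ordered monomials applied to $v_\lambda$. Each monomial vector is a weight vector, so the components of weight $\nu\neq\mu$ must cancel. Hence $w$ lies in the $\A$-span of $\mathcal{B}_\mu$.

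The independence step uses Proposition \ref{CoeffM_AM_q} and Theorem \ref{basisMq(l)}. The set $\mathcal{B}_\mu$ is $\C(q)$-linearly independent in $M_q(\lambda)$, and $\A\subset\C(q)$ is an integral domain, so any $\A$-linear relation among the elements of $\mathcal{B}_\mu$ is in particular a $\C(q)$-relation and hence is trivial. Therefore $M_{\A}(\lambda)_\mu$ is free on $\mathcal{B}_\mu$. Its rank is $|\mathcal{B}_\mu|$, which equals $\dim_{\C(q)}M_q(\lambda)_\mu$ because $\mathcal{B}_\mu$ is a $\C(q)$-basis of $M_q(\lambda)_\mu$. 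One can also check this via $M_q(\lambda)_\mu\cong\C(q)\otimes_{\A}M_{\A}(\lambda)_\mu$.

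The main obstacle is that Theorem \ref{basisMq(l)} as proved gives only spanning, not linear independence of the ordered monomials over $\C(q)$. I would supply independence in one of two ways. The first is to build a $\C(q)$-linear map from the free module on ordered monomials, identified via Theorem \ref{PBWBasisbasic} with the span of $Y_1Y_2X_3$, onto $M_q(\lambda)$, and to compare it with the quotient $U_q/U_q\cdot(\text{augmentation ideal of }B_q)$ using the PBW basis of Theorem \ref{PBWBasisbasic}. The second, if that is too delicate, is to specialize at $q=1$: the images form the PBW basis of the classical imaginary Verma module, so any nontrivial relation, after clearing denominators and dividing by powers of $(q-1)$, would give a nontrivial classical relation, which is a contradiction. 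I would attempt the first argument and fall back on the specialization argument.
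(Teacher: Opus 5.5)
Your main line is the paper's one-line proof written out. You get $\A$-spanning of $M_{\A}(\lambda)_\mu$ from Proposition~\ref{BasesM_A} together with the weight decomposition, and $\A$-independence from $\C(q)$-independence of the ordered monomials, which is what Proposition~\ref{CoeffM_AM_q} packages. The finiteness of $\mathcal{B}_\mu$ that you invoke is false, however, and it is not implicit in Theorem~\ref{basisMq(l)}. For a finite simple root $\alpha_i$, the ordered monomials $F^{(i)}_{k\tilde d_i\delta}E_{k\tilde d_i\delta-\alpha_i}$, $k>0$, applied to $v_\lambda$ all have weight $\lambda-\alpha_i$. Any monomial containing a $B_1$- or $A_3$-factor has a nonzero finite part in its weight. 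So only the weights $\lambda-k\delta$ are spanned by purely imaginary monomials and are finite-dimensional; this is why the paper calls $H_q(\lambda)$ the sum of the finite weight spaces. You must therefore use your fallback: $M_{\A}(\lambda)_\mu$ is free on a possibly infinite family $\mathcal{B}_\mu$, and the rank equality is an equality of cardinals, which is well defined.

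You are right that the proof of Theorem~\ref{basisMq(l)} only shows spanning. The paper does not prove independence either; it simply uses the monomials as a basis in Proposition~\ref{CoeffM_AM_q}. Neither of your repairs closes this.

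The specialization argument is circular. The universal property gives you a surjection $M(\lambda)\twoheadrightarrow\overline{M_{\A}(\lambda)}$ sending classical PBW monomials to the vectors $\overline{Bv_\lambda}$. A relation among the $\overline{Bv_\lambda}$ in $\overline{M_{\A}(\lambda)}$ becomes a classical relation only if this surjection is injective. That injectivity is the classical-limit proposition, which the paper derives afterwards from the very freeness and rank statement you are proving. With infinite-dimensional weight spaces, no dimension count can replace it.

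Your first route needs $U_q\cong\operatorname{span}(Y_1Y_2X_3)\otimes B_q$, with $B_q$ spanned by ordered monomials $Y_3ZX_2X_1$. Concretely, mixed products such as $E_{\alpha+k\delta}E_{\beta-l\delta}$ (an $A_1$-vector times a $B_3$-vector, both in $S$) would have to straighten without producing $B_1$-, $B_2$- or $A_3$-factors. Theorem~\ref{TwistPBW} only straightens products of root vectors of the same sign, and Proposition~\ref{KacBeckCommRel} only controls leading terms. This is precisely the PBW theorem for the imaginary partition that the paper says is unavailable; you would need an $S$-adapted triangular decomposition (for instance of Drinfeld type), which is not among the results recalled in the paper.

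As it stands, your proof is complete only to the extent the paper's is, that is, conditionally on reading Theorem~\ref{basisMq(l)} as a basis statement.
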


\dem Follows from the above weight decomposition and Proposition \ref{CoeffM_AM_q}.
\findem

We conclude this section with the construction of classical limits of imaginary Verma modules for twisted quantum affine algebras. We are going to show that they are isomorphic to imaginary Verma modules for twisted affine algebras. \\

First of all, let $I=\langle q-1 \rangle{\A}$ be the ideal in $\A$ generated by $q-1$. Recall that there is an isomorphism of fields $\A / I \cong \C$ given by $f + I \mapsto f(1)$. Let $U' = \A / I \otimes_{\A} U_{\A} \cong U_{\A}/IU_{\A}$ and $\bar{U} = U'/K'$ where $K'$ is the ideal of $U'$ generated by $D-1$ and $K_i-1$ for $i\in \{0, 1, \ldots, N\}$. Then, $\overline{U} \cong U(\hat{\g})$ and we have the composition 

$$\xymatrix{ U_{\A} \ar[r] & U_{\A}/ IU_{\A} \cong U' \ar[r] & \overline{U} = U'/K' \cong U(\hat{\g}) }$$

which is called the classical limit. \\

For $u \in U_{\A}$, denote $\overline{u}$ its image in $\overline{U}$. Under this isomorphism, the elements $\overline{E_i}, \overline{F_i}, \overline{D}$ and $\overline{H_i}$, where $H_i = \begin{bmatrix} K_i \ ;\  0 \\ 1 \end{bmatrix}$ may be identified with $e_i, f_i, d$ and $h_i$. Moreover, the classical limit $\overline{E_\beta}$ of a root vector $E_\beta = T_{w}(E_i)$, where $w(\alpha_i) = \beta$, is the root vector $e_\beta = T_{w}(e_i)$ for $U(\hat{\g})$. The same happens for the imaginary root vectors because they are defined using braid operators and root vectors (see \cite{D} sections 2.2 - 2.3).\\

\begin{prop}\label{limitsrootPBW}
    Classical limits of root vectors are root vectors. Moreover, the classical limit of the twisted PBW basis given in Theorem \ref{TwistPBW} is a PBW basis for $U(\hat{\g})$.
\end{prop}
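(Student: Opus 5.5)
The plan has two parts: first show that classical limits of root vectors are root vectors, then show that the PBW monomials specialize to a PBW basis of $U(\hat{\g})$.

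\textbf{Real root vectors.} Each $E_{\beta_r}$ is obtained from some $E_{\pi(r)}$ by a composition of braid operators $T_i^{\pm 1}$. Two facts are needed.

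First, $T_i$ preserves $U_{\A}$. This holds because the defining formulas for $T_i(E_j)$ and $T_i(F_j)$ involve only divided powers $E_i^{(m)}$, $F_i^{(m)}$, powers of $q_i$, and $K_i^{\pm1}$. The divided powers lie in $U_{\A}$ since $[m]_{q_i}!^{-1}\in\A$.

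Second, $T_i$ commutes with the classical limit map $U_{\A}\to\overline U$. At $q=1$ with $K_i\mapsto 1$, the formula for $T_i(E_j)$ becomes
\[
\sum_{r}(-1)^{r-a_{ij}}\,\frac{e_i^{-a_{ij}-r}}{(-a_{ij}-r)!}\,e_j\,\frac{e_i^{r}}{r!},
\]
which equals $\frac{1}{(-a_{ij})!}(\mathrm{ad}\,e_i)^{-a_{ij}}e_j$. This is the classical Lusztig/Tits automorphism $\exp(\mathrm{ad}\,e_i)\exp(-\mathrm{ad}\,f_i)\exp(\mathrm{ad}\,e_i)$ evaluated on $e_j$. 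Likewise $T_i(E_i)=-F_iK_i\mapsto -f_i$.

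Together these give $\overline{E_{\beta}}=\overline{T_w}(e_{\pi(r)})=e_\beta$, a nonzero vector in the root space $\hat{\g}_\beta$, in the sense the paper already uses. The same argument applies to the $F$'s, since $\Omega$ specializes to the Chevalley anti-involution.

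\textbf{Imaginary root vectors.} The element $\tilde E^{(i)}_{k\tilde d_i\delta}=q_i^{-2}E_iE_{k\tilde d_i\delta-\alpha_i}-E_{k\tilde d_i\delta-\alpha_i}E_i$ specializes to $[e_i,e_{k\tilde d_i\delta-\alpha_i}]$. This lies in $\g^{[k]}\otimes t^{k}$ and is nonzero, because the corresponding classical bracket of root vectors is nonzero; in the twisted algebra it is $h_i\otimes t^k$ up to a scalar. The generating function for $E^{(i)}_{k\tilde d_i\delta}$ has the form $\exp\big((q_i-q_i^{-1})\sum_k E_k z^k\big)$. Expanding gives
\[
E^{(i)}_{k\tilde d_i\delta}=\tilde E^{(i)}_{k\tilde d_i\delta}+(q_i-q_i^{-1})\cdot(\text{polynomial in }\tilde E\text{'s with coefficients in }\A).
\]
Here the rational coefficients $1/m$ are harmless, since $\A\supset\C$. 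Hence $\overline{E^{(i)}_{k\tilde d_i\delta}}=\overline{\tilde E^{(i)}_{k\tilde d_i\delta}}$, an imaginary root vector. These limits for $i\in I_0$ span the imaginary root space $\hat{\g}_{k\delta}$ of dimension equal to its multiplicity, since the $h_i\otimes t^k$ are linearly independent in the appropriate graded piece.

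\textbf{PBW basis.} The ordered monomials $F(\underline\gamma)K_\lambda E(\underline\gamma')$ lie in $U_{\A}$. Their images in $\overline U$ are ordered monomials in a basis of $\hat{\g}$ consisting of root vectors for all of $\tilde\Delta$ together with the Cartan elements. By the classical PBW theorem these form a basis of $U(\hat{\g})$, once we check two things.

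First, the limits of the root vectors for distinct elements of $\tilde\Delta_+$, together with $\hat{\h}$ and the negative root vectors, form a basis of $\hat{\g}$. For real roots this follows from the multiplicity-one argument above; for imaginary roots it follows from the multiplicity count.

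Second, the Cartan part specializes correctly: $K_\lambda\mapsto 1$, while the $\begin{bmatrix}K_i;0\\1\end{bmatrix}$ span $\hat{\h}$. To incorporate this properly one should use the Lusztig-element version of the Cartan part of the basis, i.e.\ $U^0_\A$, rather than the bare $K_\lambda$.

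The main obstacle I expect is this final linear-independence step for the imaginary part, specifically showing that the limits $\overline{E^{(i)}_{k\delta}}$ for varying $i$ are independent. In type $A_{2n}^{(2)}$ with $i=1$, and for $\tilde d_i>1$, the twisted identification of $\hat{\g}_{k\delta}$ as $\h^{[k \bmod r]}\otimes t^k$ must be checked. I would handle this by computing $[\,\overline{E^{(i)}_{k\delta}},\,e_{j}\,]$ from the limit of \eqref{for2.1}: $\overline{x_{ijr}}$ is the nondegenerate (twisted) symmetrized Cartan matrix entry, which forces independence.
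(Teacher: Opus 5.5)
Your argument is essentially the paper's: the braid operators preserve $U_{\A}$ and specialize to the classical ones (as in \cite{B01}), so root vectors go to root vectors and the classical PBW theorem does the rest. Your version is in fact more careful, notably in observing that the bare $K_\lambda$ specialize to $1$ and must be replaced by Lusztig elements of $U^0_{\A}$ for the statement to hold.

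The one step to change is your independence argument for the imaginary limits. With the constants $x_{ijr}$ as printed in Section 2 (and $a_{ij}$ the twisted Cartan matrix), the specialization $\overline{x_{ijk}}$ is not a rescaled Cartan matrix and can be singular: for $D_3^{(2)}$ and even $k$ one gets $\overline{x_{11k}}=\overline{x_{22k}}=2$ and $\overline{x_{12k}}\,\overline{x_{21k}}=4$. Instead, check directly in the loop realization that $[e_i,e_{k\delta-\alpha_i}]$ is a nonzero multiple of an ($\omega$-twisted) $\sigma$-orbit sum of coroots of $\g$, tensored with $t^k$. Such sums over distinct orbits $i$ with $\tilde d_i\mid k$ are linearly independent.
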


\dem The PBW basis of Theorem \ref{TwistPBW} may be identified with a PBW basis for $U(\hat{\g})$. This follows from Section \ref{prelims} and \cite{B01}, because the classical limit of the monomials in the basis clearly generates the PBW basis of the classical enveloping algebra, and they remain linearly independent because the limits in the classical case do.
\findem

We define the classical limit of $M_{\A}(\lambda)$ to be 

$$ \overline{M_{\A}(\lambda)} := \A / I \otimes_{\A} M_{\A}(\lambda) $$

which by construction is a $\overline{U} \cong U(\hat{\g})$-module. Since $M_{\A}(\lambda)$ has a weight decomposition given by $M_{\A}(\lambda) = \bigoplus_{\mu \in P} M_{\A}(\lambda)_\mu$, then setting $\overline{M_{\A}(\lambda)}_\mu := \A / I \otimes_{\A} M_{\A}(\lambda)_\mu$, we must have $\overline{M_{\A}(\lambda)} = \bigoplus_{\mu \in P} \overline{M_{\A}(\lambda)}_\mu$. Moreover, $\dim_{\C}\overline{M_{\A}(\lambda)}_{\mu} = \rank_{\A} M_{\A}(\lambda)_\mu$ (see Proposition 5.1 in \cite{FGM}).

\begin{prop}
    $\overline{M_{\A}(\lambda)} \cong M(\lambda)$ as $U(\hat{\g})$-modules.
\end{prop}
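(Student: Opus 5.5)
We will prove that $\overline{M_{\A}(\lambda)}\cong M(\lambda)$ by producing a surjective $U(\hat{\g})$-homomorphism $M(\lambda)\to\overline{M_{\A}(\lambda)}$ and then comparing weight space dimensions. Here $M(\lambda)=U(\hat{\g}^{(r)})\otimes_{U(\mathfrak b_S)}\C v$ is the classical imaginary Verma module attached to the partition $S$. The generator will be $\bar v_\lambda := 1\otimes v_\lambda$.

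\emph{Step 1: $\bar v_\lambda$ is an $S$-highest weight vector of weight $\lambda$.} For $\beta\in S$ the root vector $E_\beta$ (real, or imaginary $E^{(i)}_{k\delta}$) lies in $U_{\A}$ and kills $v_\lambda$. By Proposition \ref{limitsrootPBW}, $\overline{E_\beta}$ is the classical root vector $e_\beta$ (respectively the imaginary basis vectors $h_i\otimes t^k$ up to nonzero scalar). Hence $e_\beta\cdot\bar v_\lambda=0$ for all $\beta\in S$.

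For the Cartan part, $H_i=\begin{bmatrix} K_i \ ;\ 0 \\ 1\end{bmatrix}$ acts on $v_\lambda$ by $[\lambda(h_i)]_{q_i}$, which specializes to $\lambda(h_i)$ at $q=1$. Similarly $\begin{bmatrix} D \ ;\ 0\\1\end{bmatrix}$ gives $\lambda(d)$, and $\gamma^{1/2}$ gives $\lambda(c)$ through the corresponding Lusztig element. So $\bar v_\lambda$ has weight $\lambda$ for $\hat{\h}$.

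Since $M_{\A}(\lambda)=U_{\A}v_\lambda$, we also get $\overline{M_{\A}(\lambda)}=\overline U\,\bar v_\lambda$. The universal property of induction then gives a surjection $\Phi:M(\lambda)\to\overline{M_{\A}(\lambda)}$, $u\otimes v\mapsto u\bar v_\lambda$, which is weight-preserving. Before using this, one must check that $\bar v_\lambda\neq 0$. This follows because $v_\lambda$ is part of an $\A$-basis of the free module $M_{\A}(\lambda)_\lambda$, and $\overline{M_{\A}(\lambda)}_\lambda$ then has dimension equal to its rank, which is $1$.

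\emph{Step 2: dimension comparison.} For each weight $\mu$ we have
\[
\dim_\C\overline{M_{\A}(\lambda)}_\mu=\rank_{\A}M_{\A}(\lambda)_\mu=\dim_{\C(q)}M_q(\lambda)_\mu .
\]
By Theorem \ref{basisMq(l)} and Proposition \ref{BasesM_A}, the right-hand side equals the number of ordered monomials $\prod E_{-\alpha_i-l_i\delta}\prod E^{n_k}_{-k\delta}\prod E_{-\beta_s+m_s\delta}$ of weight $\mu-\lambda$. Here the imaginary factors are counted with multiplicity, i.e. indexed by $\tilde\Delta^{im}$.

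On the classical side, the PBW theorem for $U(\hat{\g}^{(r)})$ gives $M(\lambda)\cong U(\mathfrak n_{-S})\otimes\C v$. Its weight spaces therefore have dimension equal to the number of ordered PBW monomials in $-S$ root vectors, again with imaginary multiplicities. The two counts are the same combinatorial count, since $\tilde\Delta$ records exactly the root multiplicities of $\hat{\g}^{(r)}$. Hence $\dim M(\lambda)_\mu=\dim\overline{M_{\A}(\lambda)}_\mu$. Both modules have finite-dimensional weight spaces in the relevant grading (or one compares the graded pieces in the height filtration), so the surjection $\Phi$ restricts to bijections on weight spaces and is an isomorphism.

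\emph{Main obstacle.} The delicate point is the equality of dimensions. Theorem \ref{basisMq(l)} and Proposition \ref{BasesM_A} as stated only give spanning sets. We need linear independence of these monomials in $M_q(\lambda)$, or at least an upper bound on $\dim M_q(\lambda)_\mu$ together with a lower bound. We will handle this through the surjection itself. The images of the spanning monomials under specialization are the classical PBW monomials applied to $\bar v_\lambda$, by Proposition \ref{limitsrootPBW}. Under $\Phi$ these correspond to a basis of $M(\lambda)_\mu$, so they are linearly independent in $\overline{M_{\A}(\lambda)}$. A relation among the quantum monomials over $\C(q)$ could be cleared of denominators to lie in $\A$ with some coefficient nonzero mod $(q-1)$. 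Specializing it would then give a classical relation, a contradiction. Hence the monomials form a basis in both settings and the dimensions agree, which completes the argument.
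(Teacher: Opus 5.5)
\textbf{There is a genuine gap: your argument for linear independence is circular, and the dimension count in Step 2 cannot work as stated.}

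Your overall strategy matches the paper's, which simply defers to Propositions 5.3 and 5.4 of \cite{FGM}: take the surjection $\Phi\colon M(\lambda)\to\overline{M_{\A}(\lambda)}$ given by the universal property, then compare PBW monomials. You are also right that the proof of Theorem~\ref{basisMq(l)} only exhibits a spanning set. The problem is how you resolve that obstacle.

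Write $m_j$ for the ordered quantum monomials of a fixed weight and $\bar m_j$ for their classical limits. The vectors $\Phi(\bar m_j\otimes v)=\bar m_j\bar v_\lambda$ are linearly independent if and only if $\Phi$ is injective on that weight space. That is exactly the statement to be proved. Knowing that the $\bar m_j\otimes v$ form a basis of $M(\lambda)_\mu$ tells you nothing about their images under a surjection. Your later denominator-clearing step is a valid implication, but it only transports this unproved independence back to $M_q(\lambda)$. Everything you actually establish is an upper bound: the monomials span $M_{\A}(\lambda)$, and $\Phi$ is onto. No lower bound on $\overline{M_{\A}(\lambda)}$ is ever produced.

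Step 2 has a second problem. Away from $\lambda-\Z_{\geq 0}\delta$, the nonzero weight spaces of imaginary Verma modules are infinite-dimensional. For example, the vectors $e_{-\alpha-l\delta}\,e_{-\alpha+m\delta}\otimes v$ with $l-m$ fixed and $l,m$ running over admissible values give infinitely many PBW vectors of a single weight. So ``surjective with equal dimensions'' does not force injectivity. Passing to pieces of a height filtration does not help either, since the dimensions of those pieces in the target are again the unknown.

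What you need is a genuine basis statement: the ordered monomials applied to $v_\lambda$ are linearly independent in $M_q(\lambda)$. The paper supplies this by reading Theorem~\ref{basisMq(l)} as a basis theorem, as it does in the proof of Proposition~\ref{CoeffM_AM_q}. Granting that, the argument closes without any dimension count:
\begin{itemize}
\item $M_{\A}(\lambda)$ is $\A$-free on the $m_jv_\lambda$;
\item hence $\{\bar m_j\bar v_\lambda\}$ is a basis of $\overline{M_{\A}(\lambda)}$;
\item $\Phi$ carries the classical PBW basis $\{\bar m_j\otimes v\}$ bijectively onto it.
\end{itemize}
If you do not want to grant Theorem~\ref{basisMq(l)} as a basis statement, you need an independent source of the lower bound. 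One option is a factorization of $U_q(\hat{\g}^{(r)})$ as the span of the ordered $(-S)$-monomials tensored with $B_q$, which makes the induced module free on those monomials. At nonzero level you could instead combine irreducibility of the classical $M(\lambda)$ \cite{Fut} with $\bar v_\lambda\neq 0$. At $\lambda(c)=0$, however, $M(\lambda)$ is reducible and that shortcut gives nothing.
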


\dem Thanks to the PBW basis given in Theorem \ref{TwistPBW}, the proof follows word by word the ones in Propositions 5.3 and 5.4 in \cite{FGM}.
\findem

\section{Structure of twisted quantum imaginary Verma modules}

In this section, we are going to describe more closely the structure of quantum imaginary Verma modules. \\

Let $G_q$ be the (Heisenberg) subalgebra of $U_q(\hat{\g}^{(r)})$ generated by elements of the form $X_2 \in A_2$ and $Y_2 \in B_2$. Let $M_q(\lambda)$ be the twisted quantum imaginary Verma module over $U_q(\hat{\g}^{(r)})$ with $S$-highest weight $\lambda$ and generating vector $v_\lambda$. Consider the $G_q$-module $H_q(\lambda) := G_q\cdot v_\lambda$ of $M_q(\lambda)$. It is the module consisting of the finite weight spaces of $M_q(\lambda)$, so $H_q(\lambda) := \sum_{k=0}^{\infty} M_{\lambda - k\delta}$, because elements of $G_q$ which belong to $A_2$ act by zero on $v_\lambda$.

\begin{lem}\label{lemirrHq}
    The $G_q$-module $H_q(\lambda)$ is irreducible if and only if $\lambda(c) \neq 0$.
\end{lem}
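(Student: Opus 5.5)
We treat $G_q$ as a quantum Heisenberg algebra and $H_q(\lambda)$ as its Fock-type module. First we identify $H_q(\lambda)$ explicitly. By Theorem \ref{basisMq(l)}, the vectors $\prod_{k,i}(F^{(i)}_{k\tilde d_i\delta})^{n_{k,i}}\cdot v_\lambda$ (ordered monomials in the negative imaginary root vectors $Y_2\in B_2$) are linearly independent in $M_q(\lambda)$. By (\ref{for2.5}) and its image under $\Omega$, these vectors commute among themselves. Hence $H_q(\lambda)$ is the polynomial algebra $\C(q)[F^{(i)}_{k\tilde d_i\delta}]$ acting on $v_\lambda$, and the positive imaginary root vectors annihilate $v_\lambda$. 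The relation
$$[E^{(i)}_{r\delta},F^{(j)}_{s\delta}]=\delta_{rs}x_{ijr}\frac{K_{r\delta}-K_{r\delta}^{-1}}{q_j-q_j^{-1}},$$
evaluated on $H_q(\lambda)$ with $K_{r\delta}=\gamma^{r}$ acting by $q^{r\lambda(c)}$, shows that $E^{(i)}_{r\delta}$ acts as a differential operator,
$$E^{(i)}_{r\delta}\ \mapsto\ \frac{[r\lambda(c)]\text{-type scalar}}{q_j-q_j^{-1}}\sum_j x_{ijr}\,\partial/\partial F^{(j)}_{r\delta}.$$
Here we use that $K_{r\delta}$ is central in $G_q$ and acts by a scalar throughout $H_q(\lambda)$, since $\delta$ pairs trivially with everything.

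\emph{Case $\lambda(c)\neq0$.} We must show that the scalar matrix $C_r=\big(x_{ijr}(q^{r\lambda(c)}-q^{-r\lambda(c)})/(q_j-q_j^{-1})\big)_{i,j}$, restricted to those $i,j$ with $\tilde d_i,\tilde d_j\mid r$, is invertible over $\C(q)$ for every $r>0$. The factor $q^{r\lambda(c)}-q^{-r\lambda(c)}$ is nonzero exactly when $\lambda(c)\neq0$. The matrix $(x_{ijr})$ is, up to nonzero diagonal rescalings and signs $o(i)o(j)$, a $q$-deformed symmetrized Cartan matrix $([ra_{ij}]_{q_i}/r)$. Specializing $q\to1$ gives essentially $(a_{ij})$ of the finite type $\Delta_0$ restricted to the relevant index set, which is nondegenerate; hence the determinant is a nonzero element of $\C(q)$. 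The special $A_{2n}^{(2)}$ entry $x_{11r}$ is handled the same way, since its $q=1$ value is also nonzero. Given invertibility, we take linear combinations $\tilde E^{(j)}_{r\delta}$ of the $E^{(i)}_{r\delta}$ that act as $\partial/\partial F^{(j)}_{r\delta}$. Standard Fock-space reasoning then finishes the argument. Take any nonzero $w$ in a submodule, and apply the derivations repeatedly to lower the degree. This yields a nonzero multiple of $v_\lambda$, because $H_q(\lambda)_{\lambda}=\C(q)v_\lambda$ and each weight space $\lambda-k\delta$ is finite dimensional. Since $v_\lambda$ generates $H_q(\lambda)$, the module is irreducible.

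\emph{Case $\lambda(c)=0$.} Now $K_{r\delta}$ acts by $1$, so every $E^{(i)}_{r\delta}$ commutes with every $F^{(j)}_{s\delta}$ on $H_q(\lambda)$ and kills $v_\lambda$. Hence the positive part acts by zero on all of $H_q(\lambda)$. The subspace $G_q F^{(1)}_{\tilde d_1\delta}v_\lambda=\sum_{k\ge1}H_q(\lambda)_{\lambda-k\delta}$ is therefore a proper nonzero submodule, and irreducibility fails.

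\emph{Main obstacle.} The hardest step is the nondegeneracy of $(x_{ijr})$ in the twisted cases. There the divisibility conditions $\tilde d_j\mid r$ truncate the index set, and the exponents $r/(\tilde d_i b_{ij})$ appear. We would check it by the $q\to1$ specialization, using the classical limit results of Section 4 (Proposition \ref{limitsrootPBW}). This compares with the classical twisted Heisenberg algebra, whose form on $\g^{[r\bmod r']}\cap\h$ is $\langle\,|\,\rangle$ and hence nondegenerate.
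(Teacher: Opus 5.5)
Your argument is correct in outline, and it is essentially an explicit version of what the paper does only by reference. The paper just points to the proof of Proposition~6.2 of \cite{FGM}, supplemented by Proposition~\ref{limitsrootPBW} (classical limits of the imaginary root vectors). You instead realize $H_q(\lambda)$ as $\C(q)[F^{(i)}_{k\tilde d_i\delta}]\,v_\lambda$, let the $E^{(i)}_{r\delta}$ act as derivations weighted by $C_r$, and bring in the classical comparison only where it is really needed, namely for the invertibility of $C_r$.

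The $\lambda(c)=0$ direction is fine, apart from one slip. $G_qF^{(1)}_{\tilde d_1\delta}v_\lambda$ consists only of multiples of $F^{(1)}_{\tilde d_1\delta}$, so it is in general strictly smaller than $\sum_{k\ge1}H_q(\lambda)_{\lambda-k\delta}$. Both are nonzero proper submodules ($F^{(1)}_{\tilde d_1\delta}v_\lambda\neq0$ by Theorem~\ref{basisMq(l)}), so the conclusion stands.

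The step that needs repair is your first justification of $\det C_r\neq0$. In the twisted ``otherwise'' case the factor $\tilde d_ib_{ij}/[d_i]_q$ in $x_{ijr}$ is not a row factor times a column factor: across a multiple bond $b_{ij}b_{ji}\neq b_{ii}b_{jj}=1$. Hence $(x_{ijr})|_{q=1}=(\pm b_{ij}a_{ij})$ is not a rescaled Cartan matrix.

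With the constants as printed, take $D_3^{(2)}$ with $r$ even, so both nodes occur ($d_1=2$, $d_2=1$, $a_{12}=-1$, $a_{21}=-2$). At $q=1$ you get $\begin{pmatrix}2&\epsilon\\ 4\epsilon&2\end{pmatrix}$ with $\epsilon=\pm1$, which is singular. Over $\C(q)$, however, the determinant equals $\frac{2[2r]_q[r]_q}{r^2[2]_q}\,(q^{r/2}-q^{-r/2})^2\neq0$.

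So you have two options:
\begin{itemize}
\item Compute $\det C_r$ over $\C(q)$ case by case.
\item Use the route from your last paragraph. By Proposition~\ref{limitsrootPBW}, the classical limits of $E^{(i)}_{r\delta}$ and $F^{(j)}_{r\delta}$ (with $\tilde d_i,\tilde d_j\mid r$) are bases of $\hat{\g}_{\pm r\delta}$. The pairing $\hat{\g}_{r\delta}\times\hat{\g}_{-r\delta}\to\C c$ is nondegenerate, so the genuine structure matrix is invertible at $q=1$, hence over $\C(q)$. This is precisely the ingredient the paper invokes.
\end{itemize}

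Do not mix the two routes. The example shows that the printed $x_{ijr}$ and the classical-limit statement cannot both be taken literally, so any specialization argument must be run on the true structure constants.

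Similarly, for $A_{2n}^{(2)}$ a nonzero value of $x_{11r}$ at $q=1$ is not enough by itself. What works is that raising one diagonal entry of a finite-type Cartan matrix, from $2$ to $6$ when $r$ is odd, keeps its determinant positive.
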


\dem The same proof of Proposition 6.2 in \cite{FGM}, taking advantage of Proposition \ref{limitsrootPBW}, implies the Lemma.
\findem

Any $X \in U_q(\hat{\g}^{(r)})$ can be written in the form $X = \sum E_{-\alpha_i-l_i\delta} \cdots E_{-k_i\delta} \cdots E_{-\beta_i + m_i\delta}$ for $\alpha_i, \beta_i \in \Delta_{0,+}$, $l_i\geq 0$, $k_i, m_i>0$ thanks to Theorem \ref{TwistPBW}. Define for $v_\lambda \in M_q(\lambda)$

$$ \tilde{h}(Xv_\lambda) = \sum_{i}( ht(\alpha_i) + ht(\beta_i)) $$

\begin{lem}\label{AuxLem}
    For any $X \in U_q(\hat{\g}^{(r)})$ with $\tilde{h}(Xv_\lambda)>0$, there exists $Y \in U_q(\hat{\g}^{(r)})$ such that $\tilde{h}(YXv_\lambda) < \tilde{h}(Xv_\lambda)$.
\end{lem}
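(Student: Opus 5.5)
We reduce to one basis monomial and then kill it with a suitable positive real root vector. By Theorem \ref{basisMq(l)} we may take $Xv_\lambda$ to be a linear combination of ordered monomials
\[
Y_1Y_2X_3\, v_\lambda=\prod_i E_{-\alpha_i-l_i\delta}\prod_j E_{-k_j\delta}\prod_s E_{-\beta_s+m_s\delta}\, v_\lambda .
\]
Among the monomials with maximal $\tilde h$ we fix one, $w$, that is maximal for the lexicographic order. Since $\tilde h(w)>0$, at least one factor is a real root vector. We then split into two cases: either $Y_1$ is nontrivial, or $Y_1$ is trivial and $X_3$ is nontrivial.

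\emph{Case $Y_1$ nontrivial.} Write $-\alpha-l\delta$ for the leftmost factor. Take $Y=E_{\alpha+l\delta}$, which is the positive root vector in $A_1$, and commute it to the right through the monomial. When it meets $E_{-\alpha-l\delta}$, the relation
\[
[E_{\alpha+l\delta},F_{\alpha+l\delta}]=\tfrac{K-K^{-1}}{q-q^{-1}}+(\text{lower})
\]
(obtained from Theorem \ref{TwistPBW}, or more cleanly by applying the braid automorphism $T_w$ to $[E_i,F_i]$) removes that factor and produces a Cartan element. This Cartan element acts on the rest of the monomial by a scalar. The remaining terms come in two kinds:
\begin{itemize}
\item commutators of $E_{\alpha+l\delta}$ with the other factors, which by the convexity statement of Theorem \ref{TwistPBW} are combinations of root vectors of roots strictly between the two roots involved;
\item terms in which $E_{\alpha+l\delta}$ reaches $v_\lambda$ and kills it, since $\alpha+l\delta\in S$.
\end{itemize}
Each surviving term has total finite-root height $\sum ht$ reduced by at least $ht(\alpha)$, because the weight decreases the $\Delta_0$-part by $\alpha$. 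Reordering these terms with Theorem \ref{basisMq(l)} gives combinations with $\tilde h<\tilde h(w)$.

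\emph{Case $Y_1$ trivial, $X_3$ nontrivial.} Here we instead use $Y=E_{\beta-m\delta}$ with $\beta-m\delta\in B_3$, chosen against the leftmost factor $E_{-\beta+m\delta}$ of $X_3$ (a "negative" vector that nevertheless lies in $U_q(S)$). It also annihilates $v_\lambda$, and the same argument goes through.

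The key point is that $\tilde h$ depends only on the weight: it is the height of the $\Delta_{0}$-component of $\lambda-\mathrm{wt}$, measured as $\sum ht(\alpha_i)+\sum ht(\beta_s)$. More precisely, after absorbing signs, this is the height with respect to $\Pi_0$ of the projection of the weight. Since $\mathrm{wt}(Y)$ has positive $\Delta_0$-part $\alpha$ or $\beta$, multiplication by $Y$ lowers this weight-defined quantity automatically, once $\tilde h$ is seen to be additive in weight.

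The \textbf{main obstacle} is twofold.
\begin{enumerate}
\item Justify that $\tilde h$ is genuinely a function of the weight. This is not literally true, since $Y_1$ contributes $-\alpha$ and $X_3$ contributes $-\beta$ with the same sign in the weight but opposite sign in the $\delta$-free part. So this claim needs to be rechecked.
\item If that fails, ensure $YXv_\lambda\neq 0$ by showing the leading coefficient is a nonzero scalar, of the form $[\lambda(h)+c]$ from the Cartan term, plus combinatorial multiplicities.
\end{enumerate}
If the first point fails, I would fall back on the leading-term analysis using the graded algebra of Proposition \ref{KacBeckCommRel}. There $E_{\alpha+l\delta}$ $q$-commutes with all other factors except its partner. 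Then the top-degree part of $YXv_\lambda$ is $c\cdot(\text{monomial with that factor deleted})$. Here $c$ is a $q$-number in $\lambda$ and the exponent. Nonvanishing of $c$ holds for generic $\lambda$; if needed, one can pick the factor whose Cartan scalar is nonzero, possibly by shifting $l$.
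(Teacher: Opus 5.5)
The proof never establishes the one thing the lemma actually asserts, namely $YXv_\lambda\neq 0$. If $YXv_\lambda=0$ were allowed, the statement would be empty, and Theorem~\ref{Intersection} needs a nonzero element of $W$.

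Your obstacle (1) is not an obstacle. A factor $E_{-\alpha-l\delta}$ of $Y_1$ and a factor $E_{-\beta+m\delta}$ of $X_3$ both contribute a \emph{negative} $\Delta_0$-part ($-\alpha$, resp.\ $-\beta$); only their $\delta$-coefficients have opposite signs. So $\tilde h$ is the $\Pi_0$-height of the $\Delta_0$-component of $\lambda-\mathrm{wt}$, and it is constant on weight spaces. Hence for any root vector $Y$ with positive $\Delta_0$-part the inequality is automatic once $YXv_\lambda\neq0$.

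Everything therefore rests on your obstacle (2), and the mechanism you propose for it fails.
\begin{itemize}
\item The contraction $[E_{\alpha+l\delta},F_{\alpha+l\delta}]=(K_{\alpha+l\delta}-K_{\alpha+l\delta}^{-1})/(q_\alpha-q_\alpha^{-1})$ contributes a $q$-number in $(\mu\,|\,\alpha+l\delta)$, which can vanish.
\item When $\lambda(c)=0$ this scalar does not depend on $l$ at all, so ``shifting $l$'' among matching pairs does not help.
\item Example: take $\lambda(c)=0$, $\lambda(h_i)=0$ and $Xv_\lambda=F_iv_\lambda$. Your recipe gives $Y=E_i$, and $E_iF_iv_\lambda=[\lambda(h_i)]_{q_i}v_\lambda=0$.
\item Restricting to generic $\lambda$ is not an option: Theorem~\ref{Intersection} and its corollary need the lemma for every $\lambda$, including $\lambda(h_i)=0$.
\end{itemize}
What works in this example is a non-matching vector. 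With $Y=E_{\alpha_i-\tilde d_i\delta}=\Omega(E_{\tilde d_i\delta-\alpha_i})$, apply $\Omega$ to the expression for $\tilde E^{(i)}_{\tilde d_i\delta}$ in the Remark and use $Yv_\lambda=0$. This gives $YF_iv_\lambda=q_i^{-2}F^{(i)}_{\tilde d_i\delta}v_\lambda\neq0$. That is a different mechanism: an imaginary vector is created, rather than a factor deleted with a Cartan scalar. Your argument gives no control over it, in particular over cancellations between contributions of different monomials. The same objection applies to your Case 2.

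Two supporting claims are also not valid as stated.
\begin{itemize}
\item The straightening relation of Theorem~\ref{TwistPBW} concerns two root vectors of the same sign. It says nothing about moving $E_{\alpha+l\delta}$ past the $F$-type factors of $Y_1Y_2$; those commutators produce Cartan elements and mixed terms, cf.\ (\ref{commfor}).
\item The fallback to $\mathrm{Gr}\,U_q(\hat{\g}^{(r)})$ cannot work. Proposition~\ref{KacBeckCommRel} gives $E_\alpha E_{-\beta}=E_{-\beta}E_\alpha$ for all $\alpha,\beta\in\tilde\Delta_+$, including $\beta=\alpha$. So the contraction term lies in strictly lower filtration degree and is invisible in the associated graded. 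The top symbol of $E_{\alpha+l\delta}X$ is just the reordered monomial with $E_{\alpha+l\delta}$ at the right end, which kills $v_\lambda$ and carries no information.
\end{itemize}

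The paper proves the lemma by citing the PBW basis, Proposition~\ref{CoeffM_AM_q} and the argument of \cite{FGM}, Theorem 6.4. These ingredients point to a way around the problem: get nonvanishing from the classical limit instead of from an explicit quantum coefficient.
\begin{enumerate}
\item Reduce to a weight vector.
\item Rescale it by a suitable scalar in $\C(q)$ so that it lies in $M_{\mathbb A}(\lambda)$ with nonzero image $\bar v\in\overline{M_{\mathbb A}(\lambda)}\cong M(\lambda)$.
\item Find classically a real $e_{\alpha+k\delta}$, $\alpha\in\Delta_{0,+}$, with $e_{\alpha+k\delta}\bar v\neq0$. This is a leading-term argument in the free module $U(\mathfrak n_{-S})v_\lambda$, taking $k\ll0$.
\item Lift it to $Y=E_{\alpha+k\delta}\in U_{\mathbb A}$ via Proposition~\ref{limitsrootPBW}.
\item Conclude from $\overline{YXv_\lambda}=e_{\alpha+k\delta}\bar v\neq0$, together with the fact that $\tilde h$ is determined by the weight.
\end{enumerate}
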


\dem Follows from Theorem \ref{TwistPBW}, Proposition \ref{CoeffM_AM_q} and the proof of Theorem 6.4 in \cite{FGM}.
\findem

\begin{thm}\label{Intersection}
    Let $W$ be a non-trivial submodule of $M_q(\lambda)$. Then $W\cap H_q(\lambda) \neq \{0\}$.
\end{thm}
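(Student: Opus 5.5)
The plan is a descent argument on the statistic $\tilde h$ introduced before Lemma \ref{AuxLem}, repeatedly applying that lemma until the vector lands in $H_q(\lambda)$.

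\emph{Setting up the minimal counterexample.} Since $W\neq\{0\}$ and $M_q(\lambda)$ is a weight module, while $U_q(\hat{\g}^{(r)})$ contains the projections onto weight spaces (as in the weight decomposition argument for $M_{\A}(\lambda)$), we may pick a nonzero weight vector $w\in W$. By Theorem \ref{basisMq(l)}, write $w=Xv_\lambda$, where $X$ is a $\C(q)$-combination of ordered monomials
\[
\prod_i E_{-\alpha_i-l_i\delta}\,\prod_j E_{-k_j\delta}^{n_j}\,\prod_s E_{-\beta_s+m_s\delta}.
\]
The value $\tilde h(w)$ is then defined as the maximum of $\sum(\mathrm{ht}(\alpha_i)+\mathrm{ht}(\beta_s))$ over the monomials occurring with nonzero coefficient. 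Among all nonzero vectors of $W$, choose $w$ with $\tilde h(w)$ minimal.

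\emph{Case analysis.} If $\tilde h(w)=0$, every monomial contains only imaginary factors $E_{-k\delta}$, so $w\in G_q v_\lambda=H_q(\lambda)$ and we are done. If $\tilde h(w)>0$, Lemma \ref{AuxLem} gives $Y$ with $\tilde h(YXv_\lambda)<\tilde h(Xv_\lambda)$. Since $YXv_\lambda=Yw\in W$, this contradicts minimality, provided $Yw\neq 0$.

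\emph{The main obstacle: nonvanishing of $Yw$.} Lemma \ref{AuxLem} as stated only bounds $\tilde h$, so the real content is to show $Y$ can be chosen with $Yw\neq0$. Following the proof of Theorem 6.4 in \cite{FGM}, I would take $Y$ to be a positive real root vector $E_{\beta}$ with $\beta\in S$ real, for instance $\beta=\alpha+k\delta$ with $k$ large, which pairs against a monomial of maximal $\tilde h$. Then I would compute the leading term of $E_\beta w$ modulo lower $\tilde h$ using the commutation relations (\ref{commfor}), the relations of Proposition \ref{KacBeckCommRel} in $Gr\,U_q$, and the fact that $E_\beta v_\lambda=0$. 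The leading term is a nonzero multiple of a monomial with smaller $\tilde h$, and its coefficient involves factors like $[\,\cdot\,]_{q_i}$ and $x_{ijr}$ evaluated generically in $q$. To guarantee this coefficient does not vanish, I would pass to the classical limit: by Propositions \ref{CoeffM_AM_q} and \ref{limitsrootPBW}, after rescaling $w$ into $M_{\A}(\lambda)$ with nonzero image at $q=1$, the corresponding classical computation from \cite{FGM} shows the image of $Yw$ is nonzero. Hence $Yw\neq0$, which contradicts minimality and forces $\tilde h(w)=0$, that is, $w\in W\cap H_q(\lambda)$.
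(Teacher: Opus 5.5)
Your descent on $\tilde h$ via Lemma \ref{AuxLem} is essentially the paper's own argument, which it phrases as an induction. You also correctly make explicit the point the paper uses silently, namely that $Y$ must satisfy $YXv_\lambda\neq 0$, and your rescaling into $M_{\A}(\lambda)$ followed by comparison with the classical limit supplies this. One correction to your illustrative choice: take $\beta=\alpha-N\delta$ with $N\gg 0$ rather than $k$ large, since classically $[e_\alpha\otimes t^{k},f_\alpha\otimes t^{m}]$ produces $h_\alpha\otimes t^{k+m}$, which kills $v_\lambda$ when $k+m>0$ but acts as a free Heisenberg factor when $k+m<0$.
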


\dem Let $n \in W$ and let $X \in U_q(\hat{\g}^{(r)})$ such that $Xv_\lambda = n$. If $\tilde{h}(Xv_\lambda) = 0$ then $Xv_\lambda \in H_q(\lambda)$. If $\tilde{h}(Xv_\lambda) >0$, by Lemma \ref{AuxLem}, there exists $Y \in U(\hat{\g})$ such that $\tilde{h}(YXv_\lambda) < \tilde{h}(Xv_\lambda)$, and the statement follows by induction. 
\findem

\begin{cor}
    $M_q(\lambda)$ is irreducible if and only if $\lambda(c)\neq0$. 
\end{cor}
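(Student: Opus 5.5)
We prove the two implications separately, using Theorem \ref{Intersection} and Lemma \ref{lemirrHq} as the two main inputs.

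\emph{Sufficiency.} Assume $\lambda(c)\neq 0$ and let $W\subseteq M_q(\lambda)$ be a nonzero submodule. By Theorem \ref{Intersection}, $W\cap H_q(\lambda)\neq\{0\}$. Since $G_q\subset U_q(\hat{\g}^{(r)})$, the intersection $W\cap H_q(\lambda)$ is a nonzero $G_q$-submodule of $H_q(\lambda)$. By Lemma \ref{lemirrHq}, $H_q(\lambda)$ is $G_q$-irreducible, so $W\cap H_q(\lambda)=H_q(\lambda)$. In particular $v_\lambda\in W$, hence $W\supseteq U_q(\hat{\g}^{(r)})v_\lambda=M_q(\lambda)$. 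Therefore $M_q(\lambda)$ is irreducible.

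\emph{Necessity.} Assume $\lambda(c)=0$. By Lemma \ref{lemirrHq} there is a proper nonzero $G_q$-submodule $H'\subset H_q(\lambda)$. I would choose $H'$ to be generated by a vector $w=Y v_\lambda$ of weight $\lambda-k\delta$ with $k>0$, with $Y$ a combination of the $F^{(i)}_{k\delta}$ annihilated by all $E^{(j)}_{m\delta}$. At level zero the commutators $[E^{(i)}_{r\delta},F^{(j)}_{s\delta}]$ act on $v_\lambda$ via $K_{r\delta}-K_{r\delta}^{-1}$, which is zero because $K_\delta$ acts by $q^{\lambda(c)}=1$. So every $F^{(i)}_{k\delta}v_\lambda$ is killed by the positive imaginary vectors, and $H'=G_qF^{(i)}_{k\delta}v_\lambda$ already works.

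I then set $W:=U_q(\hat{\g}^{(r)})w$ and must show $W$ is proper, i.e.\ that $v_\lambda\notin W$. I would do this by showing $W\cap M_q(\lambda)_\lambda=0$. The weight $\lambda$ lies above the weight of $w$ only in the $\delta$-direction. A PBW monomial $Y_1Y_2Y_3ZX_3X_2X_1$ sending $w$ into weight $\lambda$ must have $\delta$-component $+k\delta$ and zero finite part.

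The main obstacle is controlling the real-root pieces. The vectors $E_{\alpha+m\delta}$ with $\alpha\in\Delta_{0,+}$ do not obviously annihilate $w$, because the commutation relation \eqref{commfor} produces terms like $K\,E_{(r-s)\delta-\alpha_j}$ and $F_{(s-r)\delta+\alpha_j}$. My plan is to show that $E_\beta w=0$ for all $\beta\in S$; then $w$ is itself an $S$-highest weight vector and $W$ is a quotient of $M_q(\lambda-k\delta)$, whose weights lie in $\lambda-k\delta-Q_+(S)$ and so exclude $\lambda$. To prove this I would commute $E_{\alpha+m\delta}$ past $F^{(i)}_{k\delta}$ using the $\Omega$-image of formulas \eqref{for2.1}–\eqref{commfor}. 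This produces only terms of the form $E_{\alpha+m'\delta}$ times Cartan elements, and these kill $v_\lambda$ since $\alpha+m'\delta\in S$.

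If this direct computation becomes messy in the $A_{2n}^{(2)}$ case, where $i=j=1$ gives an exceptional $x_{ijr}$, I would fall back on a specialization argument. Passing to the classical limit via Proposition \ref{limitsrootPBW} and the isomorphism $\overline{M_{\A}(\lambda)}\cong M(\lambda)$, the corresponding statement for twisted affine Lie algebras, as in \cite{FGM}, shows that the $\lambda$-weight component of $W$ vanishes at $q=1$. Combined with the freeness of $M_{\A}(\lambda)_\lambda$, this vanishing lifts back to $q$ and gives $v_\lambda\notin W$. Hence $M_q(\lambda)$ is reducible.
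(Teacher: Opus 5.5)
Your sufficiency direction is exactly the paper's argument: Theorem~\ref{Intersection} and Lemma~\ref{lemirrHq} together force $v_\lambda\in W$. For necessity the paper only cites the same two results. By themselves they show only that $H_q(\lambda)$ is $G_q$-reducible at level zero, not that a proper $G_q$-submodule generates a proper $U_q(\hat{\g}^{(r)})$-submodule. Your singular-vector argument supplies exactly this missing step: show $w=F^{(i)}_{k\delta}v_\lambda$ is killed by all $E_\beta$, $\beta\in S$, so that $U_q(\hat{\g}^{(r)})w$ is a quotient of $M_q(\lambda-k\delta)$ and misses the weight $\lambda$. Your imaginary-root check is correct: $[E^{(j)}_{m\delta},F^{(i)}_{k\delta}]$ is a multiple of $\delta_{mk}(K_{k\delta}-K_{k\delta}^{-1})$, and $K_\delta=\gamma$ acts on $v_\lambda$ by $q^{\lambda(c)}=1$.

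The gap is in the real-root step you flagged as the main obstacle. The formulas \eqref{for2.1}--\eqref{commfor} and their $\Omega$-images only involve roots $\pm\alpha_j+s\delta$ with $\alpha_j$ simple. For non-simple $\alpha\in\Delta_{0,+}$ nothing in the paper describes $[E_{\alpha+m\delta},F^{(i)}_{k\delta}]$. Your claim that it is a root vector of $S$ times a Cartan element is therefore unsupported.

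The classical-limit fallback does not repair this. Reducing modulo $q-1$ only shows that $f(1)=0$ whenever $f v_\lambda\in U_{\A}w$. That does not exclude, for instance, $(q-1)v_\lambda\in U_{\A}w$, so you cannot conclude $v_\lambda\notin W$.

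Fortunately no commutator computation is needed. The vector $E_{\alpha+m\delta}w$ has weight $\lambda+\alpha+(m-k)\delta$. By Theorem~\ref{basisMq(l)}, every weight of $M_q(\lambda)$ has the form $\lambda-\mu+n\delta$ with $\mu\in\sum_{i\in I_\sigma}\Z_{\geq 0}\alpha_i$ and $n\in\Z$. The roots $\alpha_1,\dots,\alpha_n,\delta$ are linearly independent, and $\alpha+\mu$ is a nonzero nonnegative combination of the $\alpha_i$. Hence that weight space is zero, and $E_{\alpha+m\delta}w=0$ for every $\alpha\in\Delta_{0,+}$ and every $m\in\Z$.

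With this weight argument replacing the commutator computation and the fallback, your proof of necessity is complete. It is also more explicit than the paper's one-line citation, since it exhibits a concrete proper submodule.
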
 

\dem Follows from Lemma \ref{lemirrHq} and Theorem \ref{Intersection}.
\findem 

Let us suppose that $M_q(\lambda)$ is reducible, that means $\lambda(c)=0$ and so $H_q(\lambda)$ is reducible as a $G_q$-module. Let $M_H$ be the maximal submodule of $H_q(\lambda)$, then $M_H = \sum_{k=1}^{\infty} M_{\lambda - k\delta}$. Denote $M_0 := U_q(\hat{\g}^{(r)})M_H$ and set 

$$ \tilde{M}_q(\lambda) = M_q(\lambda )/M_0$$

It is a quotient of $M_q(\lambda)$ and we call it {\it reduced twisted imaginary Verma module}. We have the following theorem.

\begin{thm} Let $\lambda\in P$ such that $\lambda(c)=0$. Then $\tilde{M}_q(\lambda)$ is irreducible if and only if $\lambda(h_i)\neq 0$ for all $i\in I_0$.  \end{thm}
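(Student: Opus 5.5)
The plan follows the untwisted argument (Theorem 6.6 of \cite{FGM} and its analogue in \cite{CFM01,AFM}), adapted to the twisted PBW basis of Theorem \ref{TwistPBW}.

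\emph{Step 1: a basis of the quotient.} Since $\lambda(c)=0$, the imaginary vectors $E^{(i)}_{-k\delta}$ act on $v_\lambda$ into $M_H$, because $M_H=\sum_{k\ge1}M_{\lambda-k\delta}$. Combining Theorem \ref{basisMq(l)} with the commutation relations (\ref{for2.1})--(\ref{for2.5}), I would show that $M_0=U_q(\hat{\g}^{(r)})M_H$ is spanned by the PBW monomials containing at least one factor $E_{-k\delta}^{(i)}$. The point is that commuting an imaginary root vector past real root vectors only produces real root vectors, with coefficients $x_{ijr}$. Consequently $\tilde M_q(\lambda)$ has basis $\prod_i E_{-\alpha_i-l_i\delta}\prod_s E_{-\beta_s+m_s\delta}\,\bar v_\lambda$. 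Its $\lambda-k\delta$ weight spaces vanish for $k\ne0$, so $\tilde M_q(\lambda)_{\lambda-k\delta}=\C(q)\bar v_\lambda\,\delta_{k,0}$.

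\emph{Step 2: ``if''.} Assume $\lambda(h_i)\neq0$ for all $i\in I_0$, and let $W\subset\tilde M_q(\lambda)$ be a nonzero submodule. I would reproduce Lemma \ref{AuxLem} and Theorem \ref{Intersection} on the quotient: for $w=X\bar v_\lambda$ with $\tilde h>0$, find $Y$ with $\tilde h(YX\bar v_\lambda)<\tilde h(X\bar v_\lambda)$ and $YX\bar v_\lambda\neq0$. Iterating reaches a nonzero vector with $\tilde h=0$, which by Step 1 is a multiple of $\bar v_\lambda$. Hence $W=\tilde M_q(\lambda)$.

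The nonvanishing check in Step 2 is the main obstacle. Take the leading factor to be $E_{-\alpha-l\delta}$ or $E_{-\beta+m\delta}$, and choose $Y$ as a positive real root vector $E_{\alpha_j+n\delta}$ or $E_{-\alpha_j+n\delta}$ with $\alpha_j$ simple, so that the height drops. The leading term then comes from $[E_{\alpha_j+n\delta},E_{-\alpha_j-n\delta}]$, or its mixed analogue, acting on $\bar v_\lambda$. Modulo $M_0$ this equals a Cartan-type element, $K$-factor times $(K_j-K_j^{-1})/(q_j-q_j^{-1})$, plus imaginary terms that die in the quotient. It therefore acts on $\bar v_\lambda$ by a scalar proportional to $[\lambda(h_j)]_{q_j}$, possibly shifted by the $\delta$-part; this shift is harmless since $\lambda(c)=0$ kills the $\gamma$-contributions. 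This scalar is nonzero exactly when $\lambda(h_j)\ne0$. In the twisted cases I must check that the $\tilde d_j$-divisibility constraints and the root-of-unity signs $o(i)o(j)$ in $x_{ijr}$ do not kill this leading coefficient. I would handle this by passing to the classical limit (Proposition \ref{limitsrootPBW}) to see that the leading term is nonzero at $q=1$, and hence generically nonzero.

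\emph{Step 3: ``only if''.} If $\lambda(h_j)=0$ for some $j\in I_0$, the vector $E_{-\alpha_j}\bar v_\lambda$, which is nonzero by Step 1, is $S$-primitive. The relations $E_{\alpha_j+n\delta}E_{-\alpha_j}\bar v_\lambda=[\lambda(h_j)]\cdot(\ldots)=0$ hold, and $E_{\pm\alpha_k+n\delta}$ with $k\neq j$ acts by zero for weight reasons, using the $\delta$-shift argument from Step 2. Then $U_q(\hat{\g}^{(r)})E_{-\alpha_j}\bar v_\lambda$ contains no vector of weight $\lambda$, so it is a proper nonzero submodule and $\tilde M_q(\lambda)$ is reducible.
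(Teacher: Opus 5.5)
Your Step 3 uses the same submodule as the paper's ``only if'' argument, namely the one generated by $F_j\bar v_\lambda$. For the ``if'' direction you take a different route. The paper passes a proper submodule $W_q$ through $W_q\cap\tilde M_{\A}(\lambda)$ to $q=1$ and invokes Theorem 1 of \cite{Fut}; you instead lower $\tilde h$ directly inside the quotient.

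\textbf{The gap in Step 2.} Your direct argument fails: the leading coefficient of $YX\bar v_\lambda$ is not proportional to $[\lambda(h_j)]_{q_j}$. There are two reasons.
\begin{itemize}
\item The Cartan element coming from $[Y,\cdot\,]$ acts on an intermediate vector, not on $\bar v_\lambda$, so it sees that vector's finite weight.
\item Imaginary components of commutators kill only $\bar v_\lambda$. To the left of further real factors they produce real root vectors with the same $\tilde h$. For instance, applying $\Omega$ to the relations of Section 2.3 shows that $F^{(j)}_{k\delta}E_{m\delta-\alpha_j}\bar v_\lambda$ is a nonzero multiple of $E_{(m-k)\delta-\alpha_j}\bar v_\lambda$ for $0<k<m$ with $\tilde d_j\mid k$.
\end{itemize}

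Here is a concrete failure. Put $F_{k\delta-\alpha_j}=\Omega(E_{k\delta-\alpha_j})$, which is an $S$-positive root vector. Use $[E_\beta,F_\beta]=(K_\beta-K_\beta^{-1})/(q_j-q_j^{-1})$ and the fact that $\gamma$ acts as $1$. Then
\[
F_{k\delta-\alpha_j}E_{k\delta-\alpha_j}^2\bar v_\lambda=[2]_{q_j}[\lambda(h_j)-1]_{q_j}\,E_{k\delta-\alpha_j}\bar v_\lambda .
\]
Classically the coefficient is $2(\lambda(h_j)-1)$. It vanishes at $\lambda(h_j)=1$, a case the theorem declares irreducible when the other $\lambda(h_i)$ are nonzero.

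So your prescribed $Y$ can annihilate $X\bar v_\lambda$ even though $\lambda(h_j)\neq0$. The classical-limit check cannot rescue this, because the coefficient already vanishes at $q=1$. In addition, $E_{-\alpha_j+n\delta}$ with $n>0$ belongs to $-S$ and raises $\tilde h$ rather than lowering it. To close the gap you need either a genuinely more careful choice of lowering operators, as in Futorny's classical proof, or the paper's transfer of the whole submodule to the classical limit.

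\textbf{Step 1.} Your own remark that commuting an imaginary vector past a real one yields a real vector refutes the stated description of $M_0$. With the ordering of Theorem \ref{basisMq(l)}, the vector $E_{m\delta-\alpha_j}F^{(j)}_{k\delta}v_\lambda\in M_0$ has a nonzero coefficient on the purely real monomial $E_{(m-k)\delta-\alpha_j}v_\lambda$. Hence $M_0$ is not spanned by the monomials containing an imaginary factor. It may still be true that the real monomials form a basis of $\tilde M_q(\lambda)$, but that needs another argument. One option is an ordering with the imaginary factors next to $v_\lambda$, justified through the filtration of Proposition \ref{KacBeckCommRel}.

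\textbf{Step 3.} $F_j\bar v_\lambda$ is not $S$-primitive. By (\ref{commfor}), $E^{(j)}_{k\delta}F_j\bar v_\lambda=x_{jjk}K_{\alpha_j}E_{k\delta-\alpha_j}\bar v_\lambda\neq0$ for $\tilde d_j\mid k$. The conclusion that $U_q(\hat{\g}^{(r)})F_j\bar v_\lambda$ contains no vector of weight $\lambda$ survives. However, it requires bookkeeping of the kind the paper does: its Claim for the real $S$-root vectors, plus control of what the imaginary factors produce. You cannot shortcut this, because no triangular decomposition of $U_q(\hat{\g}^{(r)})$ adapted to $S$ is available.
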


\dem Suppose that $\lambda(h_i)\neq 0$ for all $i\in I_0$ and let $\tilde{M}_{\A}(\lambda) = \tilde{M}_q(\lambda) \cap M_{\A}(\lambda)$ be the $\A$-form of $\tilde{M}_q(\lambda)$. If $W_q$ is a proper submodule of $\tilde{M}_q(\lambda)$, then its $\A$-form, $W_{\A} = W_q\cap \tilde{M}_{\A}(\lambda)$ is a proper submodule of $\tilde{M}_{\A}(\lambda)$. Taking classical limits, we obtain a proper submodule of $\tilde{M}(\lambda)$, which is not possible according to Theorem 1 \cite{Fut}.\\

On the other hand, let us assume there is an $i \in I_0$ such that $\lambda(h_i)=0$. Let $F_i = E_{-\alpha_i}$ and consider the module $W = U_q(\hat{\g}^{(r)})\cdot F_i\cdot v_\lambda$, where $\tilde{M}_q(\lambda) = U_q(\hat{\g}^{(r)})\cdot v_\lambda$. Clearly $W \neq (0)$ because $F_i\cdot v_\lambda \neq 0$, and if $\tilde{M}_q(\lambda)$ were irreducible, we should have $W = \tilde{M}_q(\lambda)$. If this is the case, there exist elements $c_j \in U_q(\hat{\g}^{(r)})$ such that $v_\lambda = \sum_j c_jF_i\cdot v_\lambda$ and $ht(c_j)=\alpha_i$.\\

By Theorem \ref{PBWBasisbasic}, each $c_j$ is of the form $Y_1Y_2Y_3X_3X_2X_1$. To see the action of this element in $F_i\cdot v_\lambda$ we claim the following: \\

{\it Claim\footnote{This is Lemma 7.2 in \cite{FGM}, there appears a minus sign before $\beta$, but this is a typo.}:} For all $k \in \Z$ and $\beta  \in \Delta_{0,+}$, we have that $E_{\beta+k\delta}F_i\cdot v_\lambda = 0$.\\

By {\it Claim}, $X_1F_iv_\lambda = 0$. Because $X_2\cdot v_\lambda = 0$ and thanks to Formula (\ref{commfor}), we find that $X_2F_i\cdot v_\lambda$ could have three possible forms: it is either zero, or it takes the form $ZX_3$ or $ZY_3$. In any case, since $Y_3\cdot v_{\lambda} = 0$, elements of the form $X_3$ and $Y_3$ commute, and the elements of the form $Z$ act by scalars on $v_\lambda$ and commute with $X_3$, we reduce the $c_j$ to the form $Y_1Y_2X_3$, so we obtain $v_\lambda = \sum_j Y_1Y_2X_3\cdot v_\lambda$. These monomials are non-zero because $F_i$ is of the form $Y_1$, but the height is zero since $v_\lambda = \sum_j c_jF_i\cdot v_\lambda$, which is not possible. Then we arrive at a contradiction; hence, $\tilde{M}_q(\lambda)$ is reducible. \\

{\it Proof of claim:} Follows from Lemma 7.2 in \cite{FGM}, but instead of using the formula $1.6.5d$ of \cite{BK}, use the formula (\ref{for2.8}).
\findem 

\section{Omega operators and their commutation relations}

In this section, we are going to construct the so called Omega operators and prove their main commutation relations. These operators are crucial for the construction of crystal bases for imaginary Verma modules. We start by recalling the Drinfeld realization of the twisted quantum affine algebra following \cite{D01, JM, J}.\\

Let $\omega$ be an $r$-th root of unity; if $j$ belongs to the $\sigma$-orbit of $i$, let $[k]_j = \frac{q_i^k - q_i^{-k}}{q_i - q_i}$. The algebra $U_q(\hat{\g}^{(r)})$, for $r=2,3$, is isomorphic to the associative unital $\C(q^{1/2})$-algebra generated by $x_{ik}^{\pm }$, $h_{il}$, $K_i^{\pm 1}$, $D^{\pm 1}$, and $\gamma^{\pm 1/2}$ for $i = 1, \ldots, N$, $k,l\in \Z$, $l\neq 0$ and subject  to the following relations: 

$$ x_{\sigma(i)r}^{\pm } = \omega^rx_{ir}^{\pm }, \quad \quad h_{\sigma(i)s} = \omega^sh_{is}, \quad K_{\sigma(i)}^{\pm 1} =  K_i^{\pm 1} $$
$$ D^{\pm 1}D^{\mp 1}  = K_i^{\pm 1}K_i^{\mp 1} = \gamma^{\pm 1/2}\gamma^{\mp 1/2} = 1  $$  
$$ [\gamma^{\pm 1/2}, U_q(\hat{\g}^{(r)})] = [D,K_i^{\pm 1}] = [K_i, K_j] = [K_i, h_{js}] = 0    $$
$$ [h_{ik}, h_{jl}] = \delta_{k,-l}\sum_{s=0}^{r-1} \frac{[k \langle \alpha_i' | \sigma^s(\alpha_j') \rangle/d_i]_i}{k}\omega^{ks}\frac{\gamma^k - \gamma^{-k}}{q_j - q_j^{-1}} $$
$$ Dh_{ir}D^{-1} = q^rh_{ir}, \quad \quad  Dx_{ir}^{\pm}D^{-1} = q^rx_{ir}^{\pm}$$
$$ K_ix_{jr}^{\pm}K_i^{-1} = q^{\pm (\alpha_i | \alpha_j )}x_{jr}^{\pm} $$
\begin{equation}\label{comm_h_x_currentform} [h_{ik}, x_{jl}^{\pm}] = \pm \sum_{s=0}^{r-1} \frac{[k \langle \alpha_i' | \sigma^s(\alpha_j') \rangle/d_i]_i}{k}\omega^{ks}\gamma^{\mp |k|/2}x_{j,k+l}^{\pm}  
\end{equation}
\begin{equation}\label{comx_ix_j_currentform} 
(\prod_{s\in \Z/r\Z} (z-\omega^s q^{\pm \langle \alpha_i' | \sigma^s(\alpha_j') \rangle}w))x_{i}^{\pm }(z)x_{j}^{\pm }(w) =  (\prod_{s\in \Z/r\Z} (zq^{\pm \langle \alpha_i' | \sigma^s(\alpha_j') \rangle}- \omega^sw))x_{j}^{\pm }(w)x_{i}^{\pm }(z)
\end{equation}

where 

$$ x_{i}^{\pm }(u) = \sum_{p \in \Z} x_{ip}^{\pm}u^{-p} $$

\begin{equation}\label{Comnx^+x^-} [x_{ik}^{+}, x_{jl}^{-}] = \sum_{s=0}^{r-1}\frac{\delta_{\sigma^s(i),j}\omega^{sl}}{q_i - q_i^{-1}}(\gamma^{(k-l)/2}\psi_{i,k+l} - \gamma^{(l-k)/2}\phi_{i,k+l})  \end{equation}

where

$$ \sum_{k=0}^{\infty} \psi_{ik}z^ k = K_i\exp\Big((q_i-q_i^{-1})\sum_{l>0}h_{il}z^l\Big) $$

$$ \sum_{k=0}^{\infty} \phi_{i,-k}z^{-k} = K_i^{-1}\exp\Big(-(q_i-q_i^{-1})\sum_{l>0}h_{i,-l}z^{-l}\Big) $$

$$ \Sym_{z_1, z_2} P_{ij}^{\pm}(z_1, z_2) \sum_{s=0}^{2}(-1)^s{2 \brack s}_{q^{d_{ij}}} x_{i}(z_1)^{\pm} \cdots x_{i}(z_s)^{\pm}x_{i}(w)^{\pm}x_{i}(z_{s+1})^{\pm} \cdots x_{i}(z_2)^{\pm} = 0$$ 

for $a_{ij} = -1, \sigma(i) \neq j$,

$$ \Sym_{z_1, z_2, z_3} \big[ ((q^{\mp 3r/4}z_1 - q^{r/4} + q^{-r/4})z_2 + q^{\pm 3r/4}z_3 )x_{i}(z_1)^{\pm}x_{i}(z_2)^{\pm}x_{i}(z_3)^{\pm} \big] =0$$

\vspace{3mm}

for $a_{i\sigma(i)} = -1$. Here $\Sym$ means the symmetrization over $z_i$, and $P_{ij}^{\pm}(z,w)$ and $d_{ij}$ are defined as follows:\\

If $\sigma(i)=i$, then $P_{ij}^{\pm}(z,w) = 1$ and $d_{ij} = r$. \\

If $a_{i,\sigma(i)}=0$ and $\sigma(j)=j$, then $P_{ij}^{\pm}(z,w) = \frac{z^rq^{\pm 2r} - w^r}{zq^{\pm 2} - w}$ and $d_{ij} = r$.\\

If $a_{i,\sigma(i)}=0$ and $\sigma(j)\neq j$, then $P_{ij}^{\pm}(z,w) = 1$ and $d_{ij} = 1/2$.\\

If $a_{i,\sigma(i)}=-1$, then $P_{ij}^{\pm}(z,w) = zq^{\pm r/2} + w$ and $d_{ij} = r/4$.\\

\begin{rem}
   Theorem 4 of \cite{D01} claims that $U_q(\hat{\g}^{(r)})$ is isomorphic to the above realization. A complete proof of this fact is found in \cite{J, JM}, Theorems 3.2 and 2.1, respectively. 
\end{rem}

Note that equation (\ref{Comnx^+x^-}) can be written in formal power series form as

\begin{equation}\label{gencommx^+x^-}
    [x_i^+(u), x_j^{-}(v)] = \sum_{s=0}^{r-1} \frac{\delta_{\sigma^s(i)j}}{q_i-q_i^{-1}}\Big(\delta\Big(\frac{u\omega^s}{v\gamma }\Big)\psi_i(v\gamma^{1/2}\omega^{-s}) - \delta\Big(\frac{u\gamma \omega^s}{v }\Big)\phi_i(u\gamma^{1/2})\Big)
\end{equation}

and equation (\ref{comm_h_x_currentform}) is written using the following two identities 

\begin{equation}\label{conm-PhiX_gen}
    \phi_i(u)x_j^{\pm}(v)\phi_i(u)^{-1} = g_{ij,q}(u\gamma^{\mp 1/2}v^{-1})^{\pm}x_j^{\pm}(v)
\end{equation}

\begin{equation}\label{conm-PsiX_gen}
    \psi_i(u)x_j^{\pm}(v)\psi_i(u)^{-1} = g_{ij,q}(v\gamma^{\mp 1/2}u^{-1})^{\mp}x_j^{\pm}(v)
\end{equation}

where $\delta(z) = \sum_{k\in \Z}z^k$ is the formal Dirac delta function, 

$$ \phi_i(u) = \sum_{p\in \Z} \phi_{i,p} u^{-p}, \quad \psi_i(u) = \sum_{p\in \Z} \psi_{i,p} u^{-p}, \quad x_i^{\pm }(v) = \sum_{p\in \Z} x^{\pm}_{i,p} v^{-p} $$

and the function $g_{ij,q}(t)$ is the Taylor expansion at $t=0$ of the function 

$$ \prod_{s\in \Z/r\Z} \frac{ q^{\langle \alpha_i' | \sigma^s(\alpha_j') \rangle}t-\omega^s}{ t - \omega^sq^{\langle \alpha_i' | \sigma^s(\alpha_j') \rangle}}$$

This function satisfies the following properties

\begin{equation}\label{0sym}
    (g_{ij,q}(t))^{-1} = g_{ij,q^{-1}}(t)
\end{equation}

\begin{equation}\label{1sym}
    g_{ij,q}(t) = g_{ji,q}(t)
\end{equation}

and,

\begin{equation}\label{2sym}
    g_{i\sigma^{\ell}(j),q^{-1}}(\omega^{\ell}t^{-1})=g_{ij,q}(t) \quad \quad \text{ for } \ell=0,1,\ldots, r-1 
\end{equation}

The first equation follows directly. For the second equation, we consider the cases $r=2$ and $r=3$ separately. If $r=2$ we have $\omega^{-s} = \omega^s$. Then

$$ g_{ij,q}(t) = \prod_{s\in \Z/r\Z} \frac{ q^{\langle \alpha_i' | \sigma^s(\alpha_j') \rangle}t-\omega^s}{ t - \omega^sq^{\langle \alpha_i' | \sigma^s(\alpha_j') \rangle}} = \prod_{s\in \Z/r\Z} \frac{ q^{\langle \sigma^{-s}(\alpha_i') | \alpha_j' \rangle}t-\omega^{-s}}{ t - \omega^{-s}q^{\langle \sigma^{-s}(\alpha_i') | \alpha_j' \rangle}} = \prod_{s\in \Z/r\Z} \frac{ q^{\langle \alpha_j'| \sigma^{-s}(\alpha_i') \rangle}t-\omega^{-s}}{ t - \omega^{-s}q^{\langle \alpha_j' | \sigma^{-s}(\alpha_i') \rangle}} = g_{ji,q}(t)$$

If $r=3$, let $i_o$ be the fixed node a the Dynkin diagram of type $D_4$. Then, for $j\neq i_o$ we have

$$ g_{i_oj,q}(t) = \prod_{s\in \Z/r\Z} \frac{ q^{\langle \alpha_{i_o}' | \sigma^s(\alpha_j') \rangle}t-\omega^s}{ t - \omega^sq^{\langle \alpha_{i_o}' | \sigma^s(\alpha_j') \rangle}}= \prod_{s\in \Z/r\Z} \frac{ q^{\langle \alpha_{j}' | \sigma^{-s}(\alpha_{i_o}') \rangle}t-\omega^s}{ t - \omega^sq^{\langle \alpha_{j}' | \sigma^{-s}(\alpha_{i_o}') \rangle}} = \prod_{s\in \Z/r\Z} \frac{ q^{\langle \alpha_{j}' | \sigma^{s}(\alpha_{i_o}') \rangle}t-\omega^s}{ t - \omega^sq^{\langle \alpha_{j}' | \sigma^{s}(\alpha_{i_o}') \rangle}} = g_{ji_o}(t)$$

Finally, for the third equation, we have

$$ g_{ij,q}(t) = \prod_{s\in \Z/r\Z} \frac{ q^{\langle \alpha_i' | \sigma^s(\alpha_j') \rangle}t-\omega^s}{t - \omega^sq^{\langle \alpha_i' | \sigma^s(\alpha_j') \rangle}} = \prod_{s\in \Z/r\Z} \frac{t  -\omega^sq^{-\langle \alpha_i' | \sigma^s(\alpha_j') \rangle}}{ q^{-\langle \alpha_i' | \sigma^s(\alpha_j') \rangle}t - \omega^s} $$

and thanks to $g_{i\sigma^\ell(j),q^{-1}}(\omega^{\ell}t^{-1}) = g_{\sigma^\ell(j)i,q^{-1}}(\omega^{\ell}t^{-1})$ for $\ell\in \Z$, we get

\begin{align*}  g_{\sigma^\ell(j)i,q^{-1}}(\omega^{\ell}t^{-1}) &= \prod_{s\in \Z/r\Z} \frac{\omega^{\ell}t^{-1}  -q^{\langle \sigma^{\ell}(\alpha_j') | \sigma^s(\alpha_{i}') \rangle}\omega^s}{\omega^{\ell} q^{\langle \sigma^{\ell}(\alpha_i') | \sigma^s(\alpha_{j}') \rangle}t^{-1} - \omega^s}  = \prod_{s\in \Z/r\Z} \frac{\omega^{\ell-s}  -q^{\langle \sigma^{\ell-s}(\alpha_j') | \alpha_{i}' \rangle}t}{\omega^{\ell-s} q^{\langle \sigma^{\ell-s}(\alpha_i') | \alpha_{j}' \rangle} - t} \\
&= \prod_{p\in \Z/r\Z} \frac{\omega^{p}-q^{\langle \alpha_i' |\sigma^{p}(\alpha_j') \rangle}t}{\omega^{p}q^{\langle \alpha_i' | \sigma^{p}(\alpha_j') \rangle} - t} = g_{ij,q}(t)
\end{align*}

Let $\mc{N}_q^{-}$ be the subalgebra of $U_q$ generated by $\gamma^{\pm 1/2}$ and $x_{i,k}^{-}$, for $k \in \Z$ and $i\in I_\sigma$. Any element in $\mc{N}_q^{-}$ is a sum of elements of the form $\gamma^{l/2}x_{j_i,k_1}^{-}x_{j_2,k_2}^{-}\cdots x_{j_r,k_m}^{-}$, and so it is part of a product $\gamma^{\ell/2}x_{j_1}^{-}(v_1)\cdots x_{j_m}^{-}(v_m)$. 

\begin{rem}
    If we just consider the case $r=2$, the algebra is well defined for any $i\in I_0$. But we restrict the index set $I$ to the set $I_\sigma$ because the right commutation relation in the case $r=3$ occurs there.  
\end{rem}

Now, for an arbitrary element $ x_{j_1}^{-}(v_1)\cdots x_{j_m}^{-}(v_m)\in \mc{N}_q^-$, thanks to the identities (\ref{conm-PhiX_gen}) and (\ref{conm-PsiX_gen}), we have

\begin{multline}\label{xpsigen}
    x_{j_1}^{-}(v_1)\cdots x_{j_{l-1}}^{-}(v_{l-1}) \psi_i(v_l\gamma^{1/2}\omega^{-s})x_{j_{l+1}}^{-}(v_{l+1}) \cdots  x_{j_m}^{-}(v_m)\\
    =  \prod_{k=1}^{l-1} g_{ij_k,q}(v_{k}v_l^{-1}\omega^s)^{-1} \psi_i(v_l\gamma^{1/2}\omega^{-s}) \overline{P}_l^{j_1, \ldots, j_m}
\end{multline}

and 

\begin{multline}\label{xphigen}
    x_{j_1}^{-}(v_1)\cdots x_{j_{l-1}}^{-}(v_{l-1}) \phi_i(u\gamma^{1/2})x_{j_{l+1}}^{-}(v_{l+1}) \cdots  x_{j_m}^{-}(v_m)\\
    = \prod_{k=1}^{l-1} g_{ij_k,q}(\gamma uv_{k}^{-1})^{-1} \phi_i(u\gamma^{1/2}) \overline{P}_l^{j_1, \ldots, j_m}
\end{multline}

where $\overline{P}_l^{j_1, \ldots, j_r} = x_{j_1}^{-}(v_1)\cdots x_{j_{l-1}}^{-}(v_{l-1}) x_{j_{l+1}}^{-}(v_{l+1}) \cdots  x_{j_m}^{-}(v_m)$. Then, for an element $x_i^+(u)$

\begin{multline*}
    [x_i^{+}(u), x_{j_1}^{-}(v_1)\cdots x_{j_m}^{-}(v_m)] = \sum_{l=1}^{m}  x_{j_1}^{-}(v_1)\cdots [x_i^{+}(u), x_{j_{l}}^{-}(v_l)] \cdots  x_{j_m}^{-}(v_m)\\
    = \sum_{l=1}^{m}  x_{j_1}^{-}(v_1)\cdots \Big(\sum_{s=0}^{r-1} \frac{\delta_{\sigma^s(i)j_l}}{q_i-q_i^{-1}}\Big(\delta\Big(\frac{u\omega^s}{v\gamma }\Big)\psi_i(v\gamma^{1/2}\omega^{-s}) - \delta\Big(\frac{u\gamma\omega^s }{v}\Big)\Big)\phi_i(u\gamma^{1/2})\Big) \cdots  x_{j_m}^{-}(v_m)\\
    = \frac{1}{q_i-q_i^{-1}} \Bigg\{\sum_{l=1}^{m} \sum_{s=0}^{r-1} \delta_{\sigma^s(i)j_l} x_{j_1}^{-}(v_1)\cdots x_{j_{l-1}}^{-}(v_{l-1}) \delta\Big(\frac{u\omega^s}{v_l\gamma }\Big)\psi_i(v_l\gamma^{1/2}\omega^{-s})x_{j_{l+1}}^{-}(v_{l+1}) \cdots  x_{j_m}^{-}(v_m)\\
    - \sum_{l=1}^{m} \sum_{s=0}^{r-1} \delta_{\sigma^s(i)j_l} x_{j_1}^{-}(v_1)\cdots x_{j_{l-1}}^{-}(v_{l-1}) \delta\Big(\frac{u\gamma \omega^s}{v_l}\Big)\phi_i(u\gamma^{1/2})x_{j_{l+1}}^{-}(v_{l+1}) \cdots  x_{j_m}^{-}(v_m) \Bigg\}\\
    = \frac{1}{q_i-q_i^{-1}}\Bigg\{\sum_{l=1}^{m} \sum_{s=0}^{r-1} \delta_{\sigma^s(i)j_l}\delta\Big(\frac{u \omega^s}{v_l\gamma }\Big)  x_{j_1}^{-}(v_1)\cdots x_{j_{l-1}}^{-}(v_{l-1}) \psi_i(v_l\gamma^{1/2}\omega^{-s})x_{j_{l+1}}^{-}(v_{l+1}) \cdots  x_{j_m}^{-}(v_m)\\
    - \sum_{l=1}^{m} \sum_{s=0}^{r-1} \delta_{\sigma^s(i)j_l}\delta\Big(\frac{u\gamma \omega^s}{v_l}\Big) x_{j_1}^{-}(v_1)\cdots x_{j_{l-1}}^{-}(v_{l-1}) \phi_i(u\gamma^{1/2})x_{j_{l+1}}^{-}(v_{l+1}) \cdots  x_{j_m}^{-}(v_m) \Bigg\}\\
    = \frac{1}{q_i-q_i^{-1}}\Bigg\{\sum_{l=1}^{m}  \sum_{s=0}^{r-1} \prod_{k=1}^{l-1}\delta_{\sigma^s(i)j_l}\delta\Big(\frac{u \omega^s}{v_l\gamma }\Big)   g_{ij_k,q}(v_{k}v_l^{-1}\omega^s)^{-1} \psi_i(v_l\gamma^{1/2}\omega^{-s}) \overline{P}_l^{j_1, \ldots, j_m}\\
    - \sum_{l=1}^{m} \sum_{s=0}^{r-1} \prod_{k=1}^{l-1}\delta_{\sigma^s(i)j_l}\delta\Big(\frac{u\gamma \omega^s}{v_l}\Big) g_{ij_k,q}(\gamma uv_{k}^{-1})^{-1} \phi_i(u\gamma^{1/2}) \overline{P}_l^{j_1, \ldots, j_m} \Bigg\}\\
    = \frac{\psi_i(u\gamma^{-1/2})}{q_i-q_i^{-1}}\sum_{l=1}^{m}  \sum_{s=0}^{r-1} \prod_{k=1}^{l-1}\delta_{\sigma^s(i)j_l}\delta\Big(\frac{u \omega^s}{v_l\gamma }\Big)   g_{ij_k,q}(v_{k}v_l^{-1}\omega^s)^{-1} \overline{P}_l^{j_1, \ldots, j_m}\\
    - \frac{\phi_i(u\gamma^{1/2}) }{q_i-q_i^{-1}} \sum_{l=1}^{m} \sum_{s=0}^{r-1} \prod_{k=1}^{l-1}\delta_{\sigma^s(i)j_l}\delta\Big(\frac{u\gamma \omega^s}{v_l}\Big) g_{ij_k,q}(v_lv_k^{-1}\omega^{-s})^{-1}\overline{P}_l^{j_1, \ldots, j_m}
\end{multline*}

The above commutation formula motivates the definition of the operators: 

$$ \Omega_{\psi_i}(u)(\overline{P}) = \sum_{l=1}^{m}  \sum_{s=0}^{r-1} \prod_{k=1}^{l-1}\delta_{\sigma^s(i)j_l}\delta\Big(\frac{u \omega^s}{v_l\gamma }\Big)   g_{ij_k,q}(v_{k}v_l^{-1}\omega^s)^{-1} \overline{P}_l^{j_1, \ldots, j_m}$$

$$\Omega_{\phi_i}(u)(\overline{P}) = \sum_{l=1}^{m} \sum_{s=0}^{r-1} \prod_{k=1}^{l-1}\delta_{\sigma^s(i)j_l}\delta\Big(\frac{u\gamma \omega^s}{v_l}\Big) g_{ij_k,q}(v_lv_k^{-1}\omega^{-s})^{-1}\overline{P}_l^{j_1, \ldots, j_m} $$

Their generating series defines operators $\Omega_{\psi_i}(a) : \mc{N}_q^- \to \mc{N}_q^-$ and $\Omega_{\phi_i}(a) : \mc{N}_q^- \to \mc{N}_q^- $ for every $a \in \Z$ as their coefficients; that is, $\Omega_{\psi_i}(u) = \sum_{a\in \Z} \Omega_{\psi_i}(a)u^{-a}$ and  $\Omega_{\phi_i}(u) = \sum_{a\in \Z} \Omega_{\phi_i}(a)u^{-a}$.

Then, we have proved:

\begin{lem}
    For any $P = \gamma^{\ell/2}x_{j_1}^{-}(v_1)\cdots x_{j_m}^{-}(v_m) \in \mc{N}_q^-$, let $\overline{P} = x_{j_1}^{-}(v_1)\cdots x_{j_m}^{-}(v_m) \in \mc{N}_q^-$. Then there exist formal series $\Omega_{\psi_i}(u)$ and $\Omega_{\phi_i}(u)$ such that 
    $$ [x_i^{+}(u), \overline{P}] = \frac{1}{q_i-q_i^{-1}} \left(\psi_i(u\gamma^{-1/2})\Omega_{\psi_i}(u)(\overline{P}) - \phi_i(u\gamma^{1/2})\Omega_{\phi_i}(u)(\overline{P})\right)$$
\end{lem}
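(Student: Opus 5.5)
The plan is to show that the long computation just before the statement is exactly the proof. We take the formal-series expansion of $[x_i^+(u),\overline{P}]$ and verify each step, keeping track of the orbit sums and the root-of-unity factors. The series $\Omega_{\psi_i}(u)$ and $\Omega_{\phi_i}(u)$ are then read off as the coefficients of $\psi_i(u\gamma^{-1/2})$ and $\phi_i(u\gamma^{1/2})$ once all Cartan currents have been moved to the far left.

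First, since $\mathrm{ad}\,x_i^+(u)$ is a derivation, expand $[x_i^+(u),\overline{P}]$ as a sum over $l=1,\dots,m$. The $l$-th term has $[x_i^+(u),x_{j_l}^-(v_l)]$ inserted in position $l$. Substitute (\ref{gencommx^+x^-}) for each bracket. This produces, for each $l$ and each $s\in\Z/r\Z$ with $\sigma^s(i)=j_l$, a $\psi$-term and a $\phi$-term. Each carries a delta function which, being a scalar series, can be pulled to the front.

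Next, move the Cartan current past the $l-1$ factors $x_{j_k}^-(v_k)$ with $k<l$, using (\ref{conm-PsiX_gen}) and (\ref{conm-PhiX_gen}) with the lower sign. This gives (\ref{xpsigen}) and (\ref{xphigen}): each crossing contributes $g_{ij_k,q}(\cdot)^{-1}$, with argument $v_kv_l^{-1}\omega^s$, respectively $\gamma u v_k^{-1}$. One has to check that the $\gamma^{\pm1/2}$ shifts in the arguments combine correctly. Here $\gamma$ is central, so $\gamma^{\ell/2}$ in $P$ factors out harmlessly, which is why the statement uses $\overline{P}$.

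Finally, use the delta functions to rewrite the Cartan arguments in terms of $u$. For the $\psi$-term, $\delta(u\omega^s/(v_l\gamma))$ allows $v_l\gamma^{1/2}\omega^{-s}$ to be replaced by $u\gamma^{-1/2}$. Thus $\psi_i(u\gamma^{-1/2})$ is the same for every $l$ and $s$ and factors out of the sum. For the $\phi$-term the current is already $\phi_i(u\gamma^{1/2})$. However, its coefficient $g_{ij_k,q}(\gamma uv_k^{-1})^{-1}$ must be rewritten using $\delta(u\gamma\omega^s/v_l)$, i.e. $\gamma u=v_l\omega^{-s}$, which gives $g_{ij_k,q}(v_lv_k^{-1}\omega^{-s})^{-1}$. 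These coefficients are exactly the definitions of $\Omega_{\psi_i}(u)(\overline{P})$ and $\Omega_{\phi_i}(u)(\overline{P})$, so the identity follows. Expanding in powers of $u$ then gives the operators $\Omega_{\psi_i}(a)$ and $\Omega_{\phi_i}(a)$.

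The main obstacle is making the delta-function substitutions rigorous inside products of formal series. We must make sure that each $g$-factor, a Taylor expansion at $t=0$, is evaluated in a region where the product with $\delta$ is well defined, so that $\delta(z/w)f(z)=\delta(z/w)f(w)$ is legitimate. I would handle this by working coefficientwise: each fixed coefficient of $u^{-a}$ involves only finitely many nonzero terms when applied to a given element of $\mc{N}_q^-$. Alternatively, one can use that the $g$-factors are rational functions whose expansions agree on the support of the delta function. A second point to track is that the $\omega^s$ phases are consistent with $x^-_{\sigma(j)}=\omega^{\cdot}x^-_j$, so that the sums over $s$ do not double count within an orbit. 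This is guaranteed by the factor $\delta_{\sigma^s(i)j_l}$ together with the restriction to $I_\sigma$.
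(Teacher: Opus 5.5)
Your proposal is correct and follows the paper's own argument, which is the computation displayed just before the lemma: expand $[x_i^+(u),\overline{P}]$ as a derivation, insert the current commutator, move the Cartan currents to the left via (\ref{xpsigen}) and (\ref{xphigen}), and use the delta functions to factor out $\psi_i(u\gamma^{-1/2})$ and $\phi_i(u\gamma^{1/2})$. One caution for the verification you promise: read literally with the lower sign, (\ref{conm-PhiX_gen}) gives $x_j^-(v)\phi_i(w)=g_{ij,q}(w\gamma^{1/2}v^{-1})\,\phi_i(w)x_j^-(v)$, so each $\phi$-crossing contributes $g_{ij_k,q}(\gamma u v_k^{-1})$ rather than its inverse (this is the exponent consistent with (\ref{eq_comm02_gen})), which changes the explicit formula for $\Omega_{\phi_i}(u)$ but not the existence claim of the lemma.
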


Now, note that $\Omega_{\psi_i}(u)(1) = \Omega_{\phi_i}(u)(1) = 0$. Moreover, for 

$$ \overline{P} = x_{j_1}^{-}(v_1)\cdots x_{j_m}^{-}(v_m) = \sum_{n\in\Z}\sum_{\substack{k_1, \cdots k_m \in \Z \\ k_1 + \cdots k_m = n}} x_{j_1k_1}^{-}\cdots x_{j_mk_m}^{-}v_1^{-k_1}\cdots v_m^{-k_m}$$

we have 

\begin{align*}
    \psi_i(u\gamma^{-1/2})\Omega_{\psi_i}(u)(\overline{P}) &= \sum_{l\geq 0}\sum_{k\in\Z}\sum_{n\in\Z}\sum_{\substack{k_1, \cdots k_m \in \Z \\ k_1 + \cdots k_m = n}} \gamma^{-l/2}\psi_{i,l}\Omega_{\psi_i}(k)(x_{j_1k_1}^{-}\cdots x_{j_mk_m}^{-})v_1^{-k_1}\cdots v_m^{-k_m}u^{-k-l} \\
    &=\sum_{p\in\Z}\sum_{n\in\Z}\sum_{\substack{k_1, \cdots k_m \in \Z \\ k_1 + \cdots k_m = n}}\sum_{l\geq 0} \gamma^{-l/2}\psi_{i,l}\Omega_{\psi_i}(p-l)(x_{j_1k_1}^{-}\cdots x_{j_mk_m}^{-})v_1^{-k_1}\cdots v_m^{-k_m}u^{-p}
\end{align*}

but note also that 

$$ [x_i^{+}(u), \overline{P}] =  \sum_{p\in\Z}\sum_{n\in\Z}\sum_{\substack{k_1, \cdots k_m \in \Z \\ k_1 + \cdots k_m = n}}[x^{+}_{ip},x_{j_1k_1}^{-}\cdots x_{j_m k_m}^{-}]v_1^{-k_1}\cdots v_m^{-k_m}u^{-p}$$

is a finite sum. Hence, for a fixed tuple $(j_1,k_1), \ldots, (j_m,k_m)$ we should have that the sum 

$$ \sum_{l\geq 0} \gamma^{-l/2}\psi_{i,l}\Omega_{\psi_i}(p-l)(x_{j_1k_1}^{-}\cdots x_{j_mk_m}^{-})$$

must be finite. So, for fixed $p$ and tuple  $(j_1,k_1), \ldots, (j_m,k_m)$, we have 

$$ \Omega_{\psi_i}(p-l)(x_{j_1k_1}^{-}\cdots x_{j_mk_m}^{-}) = 0 $$

for $l$ sufficiently large. \\

\begin{prop}\label{Omega_x_commrealtions}
    Let $p,i\in I_\sigma$. Set $x_i^{-}(v) = \sum_{p\in \Z} x_{i,p}^{-}v^{-p}$, where $x_{i,p}^{-} : \mc{N}_q^- \to \mc{N}_q^-$ is considered as a left multiplication operator. Then
    \begin{align}
        \Omega_{\psi_p}(u)x_i^{-}(v) =& \Big(\sum_{s=0}^{r-1}\delta_{\sigma^s(p),i}\delta\Big(\frac{u\omega^s}{v\gamma}\Big)\Big) + g_{pi,q^{-1}}(v\gamma u^{-1})x_i^{-}(v)\Omega_{\psi_p}(u)  \label{eq_comm01_gen} \\
        \Omega_{\phi_p}(u)x_i^{-}(v) =& \Big(\sum_{s=0}^{r-1}\delta_{\sigma^s(p),i}\delta\Big(\frac{u\gamma \omega^s}{v}\Big)\Big) + g_{pi,q}(u\gamma v^{-1})x_i^{-}(v)\Omega_{\phi_p}(u)\label{eq_comm02_gen}
    \end{align}
\end{prop}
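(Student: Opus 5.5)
The plan is to verify the two identities directly on a spanning set of $\mc{N}_q^-$, using the explicit formulas for $\Omega_{\psi_p}(u)$ and $\Omega_{\phi_p}(u)$ obtained above. Let $\overline{P} = x_{j_1}^{-}(v_1)\cdots x_{j_m}^{-}(v_m)$ and consider the product $x_i^{-}(v)\overline{P}$. Relabel it as a word of length $m+1$ with $v_0:=v$ and $j_0:=i$ placed in the first position. Applying the definition of $\Omega_{\psi_p}(u)$ to this word, the sum over the deleted position $l$ splits into two parts.

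The term $l=0$ removes $x_i^-(v)$ itself. Since the product over $k<l$ is empty, this term contributes exactly $\sum_{s}\delta_{\sigma^s(p),i}\,\delta\big(\frac{u\omega^s}{v\gamma}\big)\overline{P}$. This is the scalar (multiplication-free) first summand in (\ref{eq_comm01_gen}).

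The terms $l\geq 1$ remove some $x_{j_l}^-(v_l)$ with $l\ge1$. Each such term carries the product over $k=0,\dots,l-1$, which now contains the extra factor $g_{pi,q}(v v_l^{-1}\omega^s)^{-1}$ coming from $k=0$. The remaining factors coincide with those of $\Omega_{\psi_p}(u)(\overline{P})$, and the surviving word is $x_i^-(v)\overline{P}_l$. The key step is to rewrite this extra factor independently of $l$ and $s$. The delta function $\delta\big(\frac{u\omega^s}{v_l\gamma}\big)$ allows the substitution $v_l\omega^{-s} = u\gamma^{-1}$, so the factor becomes $g_{pi,q}(v\gamma u^{-1})^{-1}$. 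By (\ref{0sym}) this equals $g_{pi,q^{-1}}(v\gamma u^{-1})$. Pulling it out of the sum gives $g_{pi,q^{-1}}(v\gamma u^{-1})\,x_i^{-}(v)\Omega_{\psi_p}(u)(\overline{P})$, which is (\ref{eq_comm01_gen}).

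The $\phi$ case is handled in the same way. The $l=0$ term gives $\sum_s\delta_{\sigma^s(p),i}\delta\big(\frac{u\gamma\omega^s}{v}\big)$. For $l\ge1$, the extra factor is $g_{pi,q}(v_l v^{-1}\omega^{-s})^{-1}$. Using the delta function, $v_l\omega^{-s} = u\gamma$, so this becomes $g_{pi,q}(u\gamma v^{-1})^{-1}$. Getting the exponent right requires (\ref{0sym}), together with (\ref{1sym}) and (\ref{2sym}) to move $\omega^{\pm s}$ and swap indices as needed. Here $j_l=\sigma^s(p)$, so $g_{p\,\sigma^{s}(\cdot)}$-type factors can be reduced via (\ref{2sym}). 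The $\gamma^{\ell/2}$ prefactors are central and can be ignored.

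The main obstacle is the bookkeeping of the root-of-unity shifts. Specifically, one must check that after the delta substitution the factor no longer depends on $s$; this is what (\ref{2sym}) is designed for. One must also check that the sign and exponent conventions match the stated $g_{pi,q^{\mp1}}$. I would first carry out the $r=2$ case explicitly to fix conventions. If a mismatch of $q$ versus $q^{-1}$ appears, I would reconcile it using (\ref{0sym}) combined with (\ref{2sym}) at $t\mapsto t^{-1}$. A secondary point is that multiplying delta functions by the formal series $g$ is legitimate: $g$ is a Taylor series, and the finiteness remark preceding the proposition ensures that coefficientwise products are well defined.
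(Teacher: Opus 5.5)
Your route is the paper's: treat $x_i^-(v)\overline{P}$ as a word of length $m+1$ with $j_0=i$ and $v_0=v$, split off the $l=0$ term, and absorb the new $k=0$ factor using the delta function. For \eqref{eq_comm01_gen} this works exactly as you describe. On the support of $\delta\big(\frac{u\omega^s}{v_l\gamma}\big)$ one has $vv_l^{-1}\omega^s=v\gamma u^{-1}$, so the extra factor is $g_{pi,q}(v\gamma u^{-1})^{-1}=g_{pi,q^{-1}}(v\gamma u^{-1})$ by \eqref{0sym}, independent of $l$ and $s$.

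The gap is in \eqref{eq_comm02_gen}. With $\Omega_{\phi_p}$ as displayed, the factors are $g_{ij_k,q}(v_lv_k^{-1}\omega^{-s})^{-1}$. Your computation then correctly gives the extra factor $g_{pi,q}(u\gamma v^{-1})^{-1}=g_{pi,q^{-1}}(u\gamma v^{-1})$. This is genuinely different from the claimed $g_{pi,q}(u\gamma v^{-1})$: for $i=p$ the constant terms are $q^{\sum_s a_{p\sigma^s(p)}}$ and $q^{-\sum_s a_{p\sigma^s(p)}}$, and the exponent is nonzero. Your proposed reconciliation cannot work, for two reasons.
\begin{itemize}
\item \eqref{1sym} and \eqref{2sym} play no role here. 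After the delta substitution the powers of $\omega$ cancel identically, and the $k=0$ factor carries the index $j_0=i$, not $j_l=\sigma^s(p)$.
\item \eqref{2sym} with $\ell=0$ only rewrites $g_{pi,q^{-1}}(u\gamma v^{-1})$ as $g_{pi,q}(v\gamma^{-1}u^{-1})$, with inverted argument. That is an expansion in $v/u$ rather than $u/v$. Moreover \eqref{2sym} holds for rational functions, not for the Taylor expansions at $t=0$ used here.
\end{itemize}

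The real issue is the normalization of $\Omega_\phi$. Taking the lower sign in \eqref{conm-PhiX_gen} gives $x_j^-(v)\phi_i(u\gamma^{1/2})=g_{ij,q}(u\gamma v^{-1})\,\phi_i(u\gamma^{1/2})\,x_j^-(v)$, with no inverse, which conflicts with \eqref{xphigen}. So the factors in $\Omega_{\phi_i}$ should be $g_{ij_k,q}(v_lv_k^{-1}\omega^{-s})$. This is exactly the form the paper's proof uses when it expands $\Omega_{\phi_p}(u)(x_i^-(v)\overline{P})$. With it, the $k=0$ factor is $g_{pi,q}(v_lv^{-1}\omega^{-s})=g_{pi,q}(u\gamma v^{-1})$ directly.

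You therefore need to fix the convention in the definition of $\Omega_\phi$, re-deriving the factor from \eqref{conm-PhiX_gen}, rather than appeal to symmetries of $g$. With the definition exactly as printed, your argument proves the second identity with $g_{pi,q^{-1}}(u\gamma v^{-1})$ instead, and no symmetry of $g$ converts one into the other.
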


\dem Let $\overline{P} = x_{j_1}^{-}(v_1)\cdots x_{j_m}^{-}(v_m)$, then 

\begin{align*}
    \Omega_{\psi_p}(u)x_i^{-}(v)(\overline{P}) &= \Omega_{\psi_p}(u)(x_i^{-}(v)x_{j_1}^{-}(v_1)\cdots x_{j_m}^{-}(v_m))\\
    &= \sum_{l=0}^{m}  \sum_{s=0}^{r-1} \prod_{k=0}^{l-1}\delta_{\sigma^s(p)j_l}\delta\Big(\frac{u \omega^s}{v_l\gamma }\Big)   g_{pj_k,q}(v_{k}v_l^{-1}\omega^s)^{-1} \overline{P}_l^{j_0,j_1, \ldots, j_m}
\end{align*}

where $j_0=i$. Then,

{\small
\begin{align*}
    \Omega_{\psi_p}(u)x_i^{-}(v)(\overline{P}) &= \Big(\sum_{s=0}^{r-1}\delta_{\sigma^s(p),i}\delta\Big(\frac{u\omega^s}{v\gamma}\Big)\Big)\overline{P} + x_i^{-}(v)\sum_{l=1}^{m}  \sum_{s=0}^{r-1} \prod_{k=0}^{l-1}\delta_{\sigma^s(p)j_l}\delta\Big(\frac{u \omega^s}{v_l\gamma }\Big)   g_{pj_k,q^{-1}}(v_{k}v_l^{-1}\omega^s) \overline{P}_l^{j_1, \ldots, j_m}\\
    &= \Big(\sum_{s=0}^{r-1}\delta_{\sigma^s(p),i}\delta\Big(\frac{u\omega^s}{v\gamma}\Big)\Big)\overline{P}\\
    &+ x_i^{-}(v)g_{pi,q^{-1}}(v\gamma u^{-1})\sum_{l=1}^{m}  \sum_{s=0}^{r-1} \prod_{k=1}^{l-1}\delta_{\sigma^s(i)j_l}\delta\Big(\frac{u \omega^s}{v_l\gamma }\Big)   g_{ij_k,q}(v_{k}v_l^{-1}\omega^s)^{-1} \overline{P}_l^{j_1, \ldots, j_m}\\
    &=\Big(\sum_{s=0}^{r-1}\delta_{\sigma^s(p),i}\delta\Big(\frac{u\omega^s}{v\gamma}\Big)\Big)\overline{P} + x_i^{-}(v)g_{pi,q^{-1}}(v\gamma u^{-1})\Omega_{\psi_p}(u)(\overline{P})
\end{align*}}

Similarly, 

{\small
\begin{align*}
    \Omega_{\phi_p}(u)x_i^{-}(v)(\overline{P}) &= \Omega_{\phi_p}(u)(x_i^{-}(v)x_{j_1}^{-}(v_1)\cdots x_{j_m}^{-}(v_m))\\
    &= \sum_{l=0}^{m} \sum_{s=0}^{r-1} \prod_{k=0}^{l-1}\delta_{\sigma^s(p)j_l}\delta\Big(\frac{u\gamma \omega^s}{v_l}\Big) g_{pj_k,q}(v_lv_k^{-1}\omega^{-s})\overline{P}_l^{j_0,j_1, \ldots, j_m} \\
    &= \Big(\sum_{s=0}^{r-1}\delta_{\sigma^s(p),i}\delta\Big(\frac{u\gamma \omega^s}{v}\Big)\Big)\overline{P} + x_i^{-}(v)\sum_{l=1}^{m} \sum_{s=0}^{r-1} \prod_{k=0}^{l-1}\delta_{\sigma^s(p)j_l}\delta\Big(\frac{u\gamma \omega^s}{v_l}\Big) g_{pj_k,q}(v_lv_k^{-1}\omega^{-s})\overline{P}_l^{j_1, \ldots, j_m}\\
    &= \Big(\sum_{s=0}^{r-1}\delta_{\sigma^s(p),i}\delta\Big(\frac{u\gamma \omega^s}{v}\Big)\Big)\overline{P}\\
    &+ x_i^{-}(v)g_{pi,q}(u\gamma v^{-1})\sum_{l=1}^{m} \sum_{s=0}^{r-1} \prod_{k=1}^{l-1}\delta_{\sigma^s(p)j_l}\delta\Big(\frac{u\gamma \omega^s}{v_l}\Big) g_{pj_k,q}(v_lv_k^{-1}\omega^{-s})\overline{P}_l^{j_1, \ldots, j_m}\\
    &= \Big(\sum_{s=0}^{r-1}\delta_{\sigma^s(p),i}\delta\Big(\frac{u\gamma \omega^s}{v}\Big)\Big)\overline{P} + x_i^{-}(v)g_{pi,q}(u\gamma v^{-1})\Omega_{\phi_p}(u)(\overline{P})
\end{align*}}

as we want. 

\findem

\begin{prop}\label{Omega_commrealtions}
    For $j,k\in I_\sigma$ we have the following commutation relations
    \begin{align}
        g_{jk,q}\left(\frac{u_1}{u_2}\right)\Omega_{\psi_j}(u_1)\Omega_{\psi_k}(u_2) =& \Omega_{\psi_k}(u_2)\Omega_{\psi_j}(u_1) \label{eq_comm03_gen}\\
        g_{jk,q}\left(\frac{u_1}{u_2}\right)\Omega_{\phi_j}(u_1)\Omega_{\phi_k}(u_2) =& \Omega_{\phi_k}(u_2)\Omega_{\phi_j}(u_1)\\
        g_{jk,q}\left(\frac{u_1\gamma^2}{u_2}\right)\Omega_{\phi_j}(u_1)\Omega_{\psi_k}(u_2) =& \Omega_{\psi_k}(u_2)\Omega_{\phi_j}(u_1)
    \end{align}

\end{prop}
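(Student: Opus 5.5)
We prove the three relations by evaluating both sides on an arbitrary monomial $\overline{P}=x_{j_1}^{-}(v_1)\cdots x_{j_m}^{-}(v_m)$ and inducting on $m$. The only tools are the twisted derivation rules of Proposition \ref{Omega_x_commrealtions}, the vanishing $\Omega_{\psi_i}(u)(1)=\Omega_{\phi_i}(u)(1)=0$, and the symmetries (\ref{0sym}), (\ref{1sym}), (\ref{2sym}) of $g_{ij,q}$. The base case $m=0$ is immediate, since both sides vanish on $1$; for $m=1$ the composite of two Omega operators already kills $x^-_i(v)$, because the first application produces a scalar.

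For the inductive step write $\overline{P}=x_i^{-}(v)\overline{P}'$. Applying (\ref{eq_comm01_gen}) twice gives
\begin{align*}
\Omega_{\psi_j}(u_1)\Omega_{\psi_k}(u_2)\,x_i^-(v)\overline{P}' ={}& \Delta_k(u_2,v)\,\Omega_{\psi_j}(u_1)\overline{P}' + g_{ki,q^{-1}}(v\gamma u_2^{-1})\,\Delta_j(u_1,v)\,\Omega_{\psi_k}(u_2)\overline{P}'\\
&+ g_{ki,q^{-1}}(v\gamma u_2^{-1})\,g_{ji,q^{-1}}(v\gamma u_1^{-1})\,x_i^-(v)\,\Omega_{\psi_j}(u_1)\Omega_{\psi_k}(u_2)\overline{P}',
\end{align*}
where $\Delta_p(u,v)=\sum_s\delta_{\sigma^s(p),i}\delta(u\omega^s/(v\gamma))$. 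The analogous expansion holds with $j,k$ and $u_1,u_2$ swapped. The last term is handled by the induction hypothesis, because the product of the two $g$-factors is symmetric in the swap. It therefore remains to match the two mixed terms after multiplying one side by $g_{jk,q}(u_1/u_2)$.

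On the support of $\Delta_k(u_2,v)$ we may substitute $v\gamma=u_2\omega^{s}$ with $i=\sigma^s(k)$. The factor $g_{ji,q^{-1}}(v\gamma u_1^{-1})$ appearing on the swapped side then becomes $g_{j\sigma^s(k),q^{-1}}(\omega^s u_2/u_1)$. By (\ref{2sym}) together with (\ref{1sym}) this equals $g_{jk,q}(u_1/u_2)$, and by (\ref{0sym}) the remaining factor cancels against the prefactor. This is precisely where the roots of unity in the twisted setting must cancel, and it is the step we expect to be the main obstacle. One has to check carefully that the delta-function substitution is legitimate as formal series: the $g$'s are expansions at $t=0$, so we must argue on coefficients, using the finiteness established after the Lemma. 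We also have to track whether (\ref{2sym}) is applied with $\ell=s$ or $\ell=-s$. If the direct substitution gives the wrong power of $\omega$, we will instead use the product form of $g_{ij,q}$ over $\Z/r\Z$, reindexing $s\mapsto s+\ell$, to absorb the shift.

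The second relation is proved identically, using (\ref{eq_comm02_gen}) and the argument $u\gamma v^{-1}$. In the mixed $\phi$–$\psi$ relation, the supports $v=u_1\gamma\omega^s$ and $v\gamma=u_2\omega^{s'}$ combine to produce the argument $u_1\gamma^2/u_2$, which explains the $\gamma^2$ in the statement. The symmetry identities are used in the same way. Finally, since the monomials $\overline{P}$ together with $\gamma^{\ell/2}$, which commutes with everything, span $\mathcal N_q^-$, the relations hold as operator identities.
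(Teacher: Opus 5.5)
Your route is the paper's: expand with Proposition \ref{Omega_x_commrealtions} twice, cancel the two mixed terms with (\ref{0sym})--(\ref{2sym}), and peel off $x_i^-(v)$ until you reach $S1=0$. However, the step you single out as the main obstacle does not merely need care; it fails. It is also exactly where the paper's proof only asserts that ``the first and second terms vanish''.

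On the support of $\Delta_k(u_2,v)$ you must compare $g_{jk,q}(u_1/u_2)$, a power series in $u_1/u_2$, with $g_{j\sigma^s(k),q^{-1}}(\omega^s u_2/u_1)$, a power series in $u_2/u_1$. Identity (\ref{2sym}) only says these expand the same rational function. They are its expansions in opposite regions, and they differ by a combination of $\delta\big(u_1/(\omega^p q^{a_{j\sigma^p(k)}}u_2)\big)$ that is nonzero whenever $g_{jk,q}$ is nonconstant. Roots of unity play no role here: the discrepancy is already present for $s=0$, so no reindexing removes it. The finiteness remark after the Lemma only makes the left-hand side well defined coefficientwise.

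Concretely, take $j=k=i$ with $\sigma(i)\neq i$ and $\langle\alpha_i'|\sigma^s(\alpha_i')\rangle=0$ for $s\neq 0$. Then only $s=0$ contributes, and $g_{ii,q}(t)=\frac{q^at-1}{t-q^a}$ with $a=\langle\alpha_i'|\alpha_i'\rangle$. For $\overline{P}=x_i^-(v_1)x_i^-(v_2)$ your own expansion gives
\[
S\,\overline{P}=-(q^a-q^{-a})\,\delta\Big(\frac{u_1}{q^au_2}\Big)\,\delta\Big(\frac{u_2}{v_1\gamma}\Big)\,\delta\Big(\frac{u_1}{v_2\gamma}\Big)\neq 0 .
\]
For instance, the $u_1^0u_2^0$ coefficients of the two sides on $(x^-_{i,0})^2$ are $1+q^{-a}$ and $1+q^{a}$. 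So, with $g$ read as its Taylor expansion at $t=0$ (the paper's convention), the first relation is false.

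The $\phi\phi$ relation has the same clash in the other mixed term. There $g_{jk,q}(u_1/u_2)$ meets the series $g_{k\sigma^s(j),q}(\omega^{-s}u_2/u_1)$ in $u_2/u_1$, so the left-hand side is not even a well-defined formal series. Only the mixed $\phi$--$\psi$ relation survives as stated. Both of its brackets compare power series in $u_1/u_2$, so only reindexing identities are needed there, not (\ref{2sym}).

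What your argument does prove, essentially verbatim, is the cleared-denominator form (\ref{eq_comm033_gen}) and its $\phi\phi$ companion. These are all that is used later in (\ref{Kas_commOO_r2}) and (\ref{Kas_commOO_r3}). Put $N=\prod_s(q^{a_{j\sigma^s(k)}}u_1-\omega^su_2)$ and $D=\prod_s(u_1-\omega^sq^{a_{j\sigma^s(k)}}u_2)$, so that $g_{jk,q}(u_1/u_2)$ is the expansion of $N/D$ in $u_1/u_2$. Then run your induction on $S'=N\,\Omega_{\psi_j}(u_1)\Omega_{\psi_k}(u_2)-D\,\Omega_{\psi_k}(u_2)\Omega_{\psi_j}(u_1)$:
\begin{itemize}
\item Multiplying the expansion of $N/D$ in \emph{either} region by the polynomial $D$ returns $N$. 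Hence the first mixed term becomes $N-D\,g_{j\sigma^s(k),q^{-1}}(\omega^su_2/u_1)=0$ as formal series.
\item The second mixed term compares two series in $u_1/u_2$. So the rational identity $g_{jk,q}(t)\,g_{k\sigma^s(j),q^{-1}}(\omega^st)=1$, obtained from (\ref{0sym}), (\ref{1sym}) and reindexing the product over $\Z/r\Z$, transfers to series.
\item The remaining term is $g_{ki,q^{-1}}(v\gamma/u_2)\,g_{ji,q^{-1}}(v\gamma/u_1)\,x_i^-(v)\,S'$, so $S'\overline{P}=0$ by induction.
\end{itemize}
In the example above, the factor $u_1-q^au_2$ of $D$ indeed kills $\delta(u_1/(q^au_2))$.
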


\dem We prove the first relation. The second and the third ones have similar proofs. Let us consider 

\begin{align*}\Omega_{\psi_j}(u_1)\Omega_{\psi_k}(u_2)x_i^{-}(v) &= \sum_{s=0}^{r-1}\delta_{\sigma^s(k),i}\delta\left(\frac{u_2\omega^s}{v\gamma} \right)\Omega_{\psi_j}(u_1) + \sum_{s=0}^{r-1} \delta_{\sigma^s(j),i}\delta\left(\frac{u_1\omega^{s}}{v\gamma}\right)g_{ki,q^{-1}}\left(\frac{v\gamma}{u_2}\right)\Omega_{\psi_k}(u_2)\\
&+ g_{ki,q^{-1}}\left(\frac{v\gamma }{u_2}\right)g_{ji,q^{-1}}\left(\frac{v\gamma}{u_1}\right)x_i^{-}(v)\Omega_{\psi_j}(u_1)\Omega_{\psi_k}(u_2)
\end{align*}

and 

\begin{align*}\Omega_{\psi_k}(u_2)\Omega_{\psi_j}(u_1)x_i^{-}(v) &= \sum_{s=0}^{r-1}\delta_{\sigma^s(j),i}\delta\left(\frac{u_1\omega^s}{v\gamma} \right)\Omega_{\psi_k}(u_2) + \sum_{s=0}^{r-1} \delta_{\sigma^s(k),i}\delta\left(\frac{u_2\omega^{s}}{v\gamma}\right)g_{ji,q^{-1}}\left(\frac{v\gamma}{u_1}\right)\Omega_{\psi_j}(u_1)\\
&+ g_{ji,q^{-1}}\left(\frac{v\gamma }{u_1}\right)g_{ki,q^{-1}}\left(\frac{v\gamma}{u_2}\right)x_i^{-}(v)\Omega_{\psi_k}(u_2)\Omega_{\psi_j}(u_1)
\end{align*}

Set $S = g_{jk,q}\left(\frac{u_1}{u_2}\right)\Omega_{\psi_j}(u_1)\Omega_{\psi_k}(u_2) - \Omega_{\psi_k}(u_2)\Omega_{\psi_j}(u_1)$. Combining like terms, we get the following expression

\begin{align*}
    S &= \sum_{s=0}^{r-1}\delta_{\sigma^{s}(k),i}\delta\left(\frac{u_2\omega^s}{v\gamma}\right)\Omega_{\psi_j}\Bigg\{ g_{jk,q}\left(\frac{u_1}{u_2}\right) - g_{ji,q^{-1}}\left(\frac{v\gamma}{u_1}\right)\Bigg\}\\
    &+ \sum_{s=0}^{r-1}\delta_{\sigma^{s}(j),i}\delta\left(\frac{u_1\omega^s}{v\gamma}\right)\Omega_{\psi_k}\Bigg\{ g_{jk,q}\left(\frac{u_1}{u_2}\right)g_{ki,q^{-1}}\left(\frac{v\gamma}{u_2}\right) - 1\Bigg\}\\
    &+ g_{ki,q^{-1}}\left(\frac{v\gamma }{u_2}\right)g_{ji,q^{-1}}\left(\frac{v\gamma}{u_1}\right)x_i^{-}(v)S
\end{align*}

Thanks to identities (\ref{0sym}), (\ref{1sym}), and (\ref{2sym}), the first and second terms vanish. Hence, for a monomial $x_{i_1}^{-}(v_1)\cdots x_{i_m}^{-}(v_m)$, we have 

$$ Sx_{i_1}^{-}(v_1)\cdots x_{i_m}^{-}(v_m) = \Bigg\{\prod_{\ell=1}^{m} g_{ki_\ell,q^{-1}}\left(\frac{v_{\ell}\gamma }{u_2}\right)g_{ji_{\ell},q^{-1}}\left(\frac{v_{\ell}\gamma }{u_1}\right)\Bigg\} x_{i_1}^{-}(v_1)\cdots x_{i_m}^{-}(v_m)S$$

If we apply this formula to 1, and notice that $S1=0$, we get $S=0$, and the desired formula follows.
    
\findem

\begin{rem}
    In case $r=1$ or 2, we can change the index set $I_\sigma$ for $I_0$, and the above two propositions are still true.
\end{rem}

The formulas in the previous Proposition can also be written as

\begin{align}
        \prod_{s=0}^{r-1}(q^{\langle \alpha_i' | \sigma^s(\alpha_j') \rangle}u_1-\omega^su_2)\Omega_{\psi_i}(u_1)\Omega_{\psi_j}(u_2) =& \prod_{s=0}^{r-1}(u_1 - \omega^sq^{\langle \alpha_i' | \sigma^s(\alpha_j')\rangle} u_2)\Omega_{\psi_j}(u_2)\Omega_{\psi_i}(u_1) \label{eq_comm033_gen}\\
        \prod_{s=0}^{r-1}(q^{\langle \alpha_i' | \sigma^s(\alpha_j') \rangle}u_1-\omega^su_2)\Omega_{\phi_i}(u_1)\Omega_{\phi_j}(u_2) =& \prod_{s=0}^{r-1}(u_1 - \omega^sq^{\langle \alpha_i' | \sigma^s(\alpha_j')\rangle} u_2)\Omega_{\phi_j}(u_2)\Omega_{\phi_i}(u_1)\\
        \prod_{s=0}^{r-1}(q^{\langle \alpha_i' | \sigma^s(\alpha_j') \rangle}u_1\gamma^2-\omega^su_2)\Omega_{\phi_i}(u_1)\Omega_{\psi_j}(u_2) =& \prod_{s=0}^{r-1}(u_1\gamma^2 - \omega^sq^{\langle \alpha_i' | \sigma^s(\alpha_j')\rangle} u_2)\Omega_{\psi_j}(u_2)\Omega_{\phi_i}(u_1)
    \end{align}

Moreover, equation (\ref{eq_comm01_gen}) can be written as

\begin{multline}\label{eq_comm011_gen}
    \prod_{s=0}^{r-1}(q^{\langle \alpha_i' | \sigma^s(\alpha_j') \rangle}v\gamma - \omega^s u) \Omega_{\psi_i}(u)x_j^{-}(v) = \prod_{s=0}^{r-1}(q^{\langle \alpha_i' | \sigma^s(\alpha_j') \rangle}v\gamma - \omega^s u)\Big(\sum_{s=0}^{r-1}\delta_{\sigma^s(i),j}\delta\Big(\frac{u\omega^s}{v\gamma}\Big)\Big) \\
    + \prod_{s=0}^{r-1}(v\gamma - \omega^s u q^{\langle \alpha_i' | \sigma^s(\alpha_j') \rangle})x_j^{-}(v)\Omega_{\psi_i}(u)
\end{multline}

To define the Kashiwara algebra, we need a component-wise version of the equations (\ref{comx_ix_j_currentform}), (\ref{eq_comm033_gen}) and (\ref{eq_comm011_gen}). We split the resulting formulas in the two cases $r=2$ and $r=3$. For simplicity, we write $a_{i\sigma^s(j)} := \langle \alpha_i' | \sigma^s(\alpha_j')\rangle$\\

Let us first consider when $r=2$. In this case, equation (\ref{comx_ix_j_currentform}) becomes

\begin{multline}\label{Kas_commxx_r2}
    x_{i,k+2}^-x_{j,s}^- - (q^{a_{ij}} - q^{a_{i\sigma(j)}})x_{i,k+1}^-x_{j,s+1}^- - q^{a_{ij} + a_{i\sigma(j)}}x_{i,k}^-x_{j,s+2}^- \\
    = q^{a_{ij} + a_{i\sigma(j)}}x_{j,s}^-x_{i,k+2}^- + (q^{a_{ij}} - q^{a_{i\sigma(j)}})x_{j,s+1}^-x_{i,k+1}^- - x_{j,s+2}^-x_{i,k}^-
\end{multline}

Equation (\ref{eq_comm033_gen}) becomes

\begin{multline}\label{Kas_commOO_r2}
    q^{a_{ij} + a_{i\sigma(j)}}\Omega_{\psi_i}(k+2)\Omega_{\psi_j}(s) + (q^{a_{ij}} - q^{a_{i\sigma(j)}})\Omega_{\psi_i}(k+1)\Omega_{\psi_j}(s+1) - \Omega_{\psi_i}(k)\Omega_{\psi_j}(s+2)\\
= \Omega_{\psi_j}(s)\Omega_{\psi_i}(k+2) - (q^{a_{ij}} - q^{a_{i\sigma(j)}})\Omega_{\psi_j}(s+1)\Omega_{\psi_i}(k+1) - q^{a_{ij} + a_{i\sigma(j)}}\Omega_{\psi_j}(s+2)\Omega_{\psi_i}(k)
\end{multline}

and (\ref{eq_comm011_gen}) may be written out as follows

\begin{multline}\label{Kas_commxO_r2}
    q^{a_{ij} + a_{i\sigma(j)}}\gamma^2\Omega_{\psi_i}(k)x_{j,s+2}^- + (q^{a_{ij}} - q^{a_{i\sigma(j)}})\gamma\Omega_{\psi_i}(k+1)x_{j,s+1}^- - \Omega_{\psi_i}(k+2)x_{j,s}^-\\
= (q^{a_{ij} + a_{i\sigma(j)}} + q^{a_{ij}} - q^{a_{i\sigma(j)}} - 1)\gamma^{k+2}\delta_{k,-s-2}(\delta_{i,j} + (-1)^{k+2}\delta_{\sigma(i),j})  \\
+\gamma^2x_{j,s+2}^-\Omega_{\psi_i}(k) - (q^{a_{ij}} - q^{a_{i\sigma(j)}})\gamma x_{j,s+1}^-\Omega_{\psi_i}(k+1) - q^{a_{ij} + a_{i\sigma(j)}}x_{j,s}^-\Omega_{\psi_i}(k+2)
\end{multline}

In the case when $r=3$, equation (\ref{comx_ix_j_currentform}) turns out to be

\begin{multline}\label{Kas_commxx_r3}
    x_{i,k+3}^-x_{j,s}^- - (\omega^2q^{a_{i\sigma^2(j)}} + \omega q^{a_{i\sigma(j)}} + q^{a_{ij}})x_{i,k+2}^-x_{j,s+1}^-\\
    +  (\omega^2q^{a_{ij} + a_{i\sigma^2(j)}} + \omega q^{a_{ij} + a_{i\sigma(j)}} + q^{a_{i\sigma(j)} + a_{i\sigma^2(j)}})x_{i,k+1}^-x_{j,s+2}^- - q^{a_{ij} + a_{i\sigma(j)} + a_{i\sigma^2(j)}}x_{i,k}^-x_{j,s+3}^-\\
    = q^{a_{ij} + a_{i\sigma(j)} + a_{i\sigma^2(j)}}x_{j,s}^-x_{i,k+3}^- - (\omega^2q^{a_{ij} + a_{i\sigma(j)}} + \omega q^{a_{ij} + a_{i\sigma^2(j)}} + q^{a_{i\sigma(j)} + a_{i\sigma^2(j)}})x_{j,s+1}^-x_{i,k+2}^-\\
    + (\omega^2q^{a_{i\sigma(j)}} + \omega q^{a_{i\sigma^2(j)}} + q^{a_{ij}})x_{j,s+2}^-x_{i,k+1}^- -x_{j,s+3}^-x_{i,k}^-
\end{multline}

and the equation (\ref{eq_comm033_gen}) can be written as

\begin{multline}\label{Kas_commOO_r3}
    q^{a_{ij} + a_{i\sigma(j)} + a_{i\sigma^2(j)}}\Omega_{\psi_i}(k+3)\Omega_{\psi_j}(s) - (\omega^2q^{a_{ij} + a_{i\sigma(j)}} + \omega q^{a_{ij} + a_{i\sigma^2(j)}} + q^{a_{i\sigma(j)} + a_{i\sigma^2(j)}})\Omega_{\psi_i}(k+2)\Omega_{\psi_j}(s+1)\\
    + (\omega^2q^{a_{i\sigma(j)}} + \omega q^{a_{i\sigma^2(j)}} + q^{a_{ij}})\Omega_{\psi_i}(k+1)\Omega_{\psi_j}(s+2) - \Omega_{\psi_i}(k)\Omega_{\psi_j}(s+3)\\
    = \Omega_{\psi_j}(s)\Omega_{\psi_i}(k+3) - (\omega^2q^{a_{i\sigma^2(j)}} + \omega q^{a_{i\sigma(j)}} + q^{a_{ij}})\Omega_{\psi_j}(s+1)\Omega_{\psi_i}(k+2)\\ 
    +  (\omega^2q^{a_{ij} + a_{i\sigma^2(j)}} + \omega q^{a_{ij} + a_{i\sigma(j)}} + q^{a_{i\sigma(j)} + a_{i\sigma^2(j)}})\Omega_{\psi_j}(s+2)\Omega_{\psi_i}(k+1) - q^{a_{ij} + a_{i\sigma(j)} + a_{i\sigma^2(j)}}\Omega_{\psi_j}(s+3)\Omega_{\psi_i}(k)
\end{multline}

finally, equation (\ref{eq_comm011_gen}) in components becomes

\begin{multline}\label{Kas_commxO_r3}
    q^{a_{ij} + a_{i\sigma(j)} + a_{i\sigma^2(j)}}\gamma^3\Omega_{\psi_i}(k)x_{j,s+3}^- - (\omega^2q^{a_{ij} + a_{i\sigma(j)}} + \omega q^{a_{ij} + a_{i\sigma^2(j)}} + q^{a_{i\sigma(j)} + a_{i\sigma^2(j)}})\gamma^2\Omega_{\psi_i}(k+1)x_{j,s+2}^-\\
   + (\omega^2q^{a_{i\sigma(j)}} + \omega q^{a_{i\sigma^2(j)}} + q^{a_{ij}})\gamma\Omega_{\psi_i}(k+2)x_{j,s+1}^- - \Omega_{\psi_i}(k+3)x_{j,s}^-\\
   = \bigl(q^{a_{ij} + a_{i\sigma(j)} + a_{i\sigma^2(j)}} - \omega^2q^{a_{ij} - a_{i\sigma(j)}} - \omega q^{a_{ij} + a_{i\sigma^2(j)}} - q^{a_{i\sigma(j)} + a_{i\sigma^2(j)}}\\
   + \omega^2q^{a_{i\sigma(j)}} + \omega q^{a_{i\sigma^2(j)}} + q^{a_{ij}} - 1 \bigl)\gamma^{k+3}\delta_{s,-k-3}(\delta_{ij} + \omega^{-k-3}\delta_{\sigma(i),j} + \omega^{-2(k+3)}\delta_{\sigma^2(i),j})\\
   + \gamma^3 x_{j,s+3}^-\Omega_{\psi_i}(k) - (\omega^2q^{a_{i\sigma^2(j)}} + \omega q^{a_{i\sigma(j)}} + q^{a_{ij}})\gamma^2x_{j,s+2}^-\Omega_{\psi_i}(k+1)\\ 
    +  (\omega^2q^{a_{ij} + a_{i\sigma^2(j)}} + \omega q^{a_{ij} + a_{i\sigma(j)}} + q^{a_{i\sigma(j)} + a_{i\sigma^2(j)}})\gamma x_{j,s+1}^-\Omega_{\psi_i}(k+2) - q^{a_{ij} + a_{i\sigma(j)} + a_{i\sigma^2(j)}}x_{j,s}^-\Omega_{\psi_i}(k+3)
\end{multline}

We can also write (\ref{eq_comm01_gen}) in components and as an operator in $\mc{N}_q^-$ as follows

\begin{equation}\label{action_omega_on_x}
    \Omega_{\psi_i}(k)x_{j,m}^- = \sum_{s=0}^{r-1}\delta_{\sigma^s(i),j}\delta_{m,-k}\omega^{-sk}\gamma^k + \sum_{s\geq 0}g_{ij,q^{-1}}(s)x_{j,m+k}^-\Omega_{\psi_i}(k-s)\gamma^s
\end{equation}

where $g_{ij,q^{-1}}(s)$, $s\in \Z$, are the coefficients of the function $g_{ij,q^{-1}}(t) = \sum_{s\geq 0} g_{ij,q^{-1}}(s)t^s$

\section{The Kashiwara algebra}

In this section, we are going to define the Kashiwara algebra associated with the natural partition of a twisted quantum affine algebra and deduce its main properties, as was done in \cite{CFM01} for the untwisted cases of ADE type (see also \cite{AFM} for a remark on the remaining untwisted cases).

\begin{defi}
    The Kashiwara algebra, denoted by $\mc{K}_q$, is defined to be the $\C(q^{1/2})$-algebra with generators $\Omega_{\psi_j}(k)$, $x_{i,p}^{-}$, and $\gamma^{\pm 1/2}$, where $k,p\in \Z$, $i,j\in I_\sigma$ and which are subject to $\gamma^{\pm 1/2}\gamma^{\mp 1/2} = 1$ and the relations (\ref{Kas_commxx_r2}), (\ref{Kas_commOO_r2}) and (\ref{Kas_commxO_r2}) when $r=2$, and (\ref{Kas_commxx_r3}), (\ref{Kas_commOO_r3}) and (\ref{Kas_commxO_r3}) when $r=3$.  
\end{defi}

We are going to define an inner product on $\mc{N}_q^-$, to do so, we need the following two lemmas. \\

Let $\overline{\alpha} : \mc{K}_q \to \mc{K}_q$ be the map defined in generators by

$$ \overline{\alpha}(\gamma^{\pm 1/2}) = \gamma^{\pm 1/2}, \quad \quad \overline{\alpha}(x^{-}_{i,p}) = \Omega_{\psi_i}(-p), \quad\quad \overline{\alpha}(\Omega_{\psi_i}(k)) = x^{-}_{i,-k}  $$

for all $k,p\in \Z$, it is extended by $\C(q^{1/2})$-linearity.

\begin{lem}
    The $\C(q^{1/2})$-linear map $\overline{\alpha} : \mc{K}_q \to \mc{K}_q$ is an involutive anti-automorphism. 
\end{lem}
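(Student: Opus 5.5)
Since $\mc{K}_q$ is presented by generators and relations, I will define $\overline{\alpha}$ on the free algebra $\mc{F}$ on the generators $\Omega_{\psi_j}(k)$, $x^-_{i,p}$, $\gamma^{\pm 1/2}$ as the unique $\C(q^{1/2})$-linear anti-automorphism with the prescribed values on generators. The statement then reduces to three checks. First, $\overline{\alpha}$ maps the ideal of defining relations into itself, so that it descends to $\mc{K}_q$. Second, $\overline{\alpha}^2=\id$ on generators. This is immediate, since $x^-_{i,p}\mapsto\Omega_{\psi_i}(-p)\mapsto x^-_{i,p}$, $\Omega_{\psi_i}(k)\mapsto x^-_{i,-k}\mapsto\Omega_{\psi_i}(k)$, and $\gamma^{\pm1/2}$ is fixed. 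Third, bijectivity follows automatically from involutivity. Only the first point needs real work.

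For the relations, I treat the cases $r=2$ and $r=3$ in parallel. The relation $\gamma^{1/2}\gamma^{-1/2}=1$ is clearly preserved. Apply $\overline{\alpha}$ to (\ref{Kas_commxx_r2}). Reversing products and substituting turns $x^-_{i,k+2}x^-_{j,s}$ into $\Omega_{\psi_j}(-s)\Omega_{\psi_i}(-k-2)$, and similarly for the other terms. After renaming $k'=-k-2$ and $s'=-s-2$, the image should be exactly (\ref{Kas_commOO_r2}) with the roles of $i,j$ interchanged and the indices shifted, possibly after multiplying through by a sign. So I need to match the coefficients $q^{a_{ij}+a_{i\sigma(j)}}$ and $q^{a_{ij}}-q^{a_{i\sigma(j)}}$. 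This uses the symmetry $a_{ij}=a_{ji}$ and $a_{i\sigma(j)}=a_{j\sigma(i)}$ coming from $\sigma$-invariance of $\langle\,|\,\rangle$, which is the same symmetry as in (\ref{1sym}). Conversely, $\overline{\alpha}$ maps (\ref{Kas_commOO_r2}) back to (\ref{Kas_commxx_r2}), which also follows from the first direction together with $\overline{\alpha}^2=\id$ on $\mc{F}$.

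The mixed relation (\ref{Kas_commxO_r2}) is mapped to itself. Terms $\Omega_{\psi_i}(k)x^-_{j,s+2}$ go to $\Omega_{\psi_j}(-s-2)x^-_{i,-k}$, which is again of the form "$\Omega$ then $x$", and the powers of $\gamma$ are central and fixed. After reindexing $(i,j,k,s)\mapsto(j,i,-s-4,-k-4)$ or a similar shift, the left side of the image should become the left side of an instance of (\ref{Kas_commxO_r2}), and likewise for the right side. The scalar term needs a separate check. It carries $\gamma^{k+2}\delta_{k,-s-2}(\delta_{i,j}+(-1)^{k+2}\delta_{\sigma(i),j})$, which must be invariant under the reindexing. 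On the support of the delta the exponents agree, $\delta_{\sigma(i),j}=\delta_{\sigma(j),i}$ for an involution, and the sign $(-1)^{k}$ is unchanged under $k\mapsto -s-\dots$ when the two indices are tied by the delta.

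For $r=3$ the same scheme applies to (\ref{Kas_commxx_r3}), (\ref{Kas_commOO_r3}) and (\ref{Kas_commxO_r3}). Here the main obstacle is the root-of-unity coefficients. Swapping $i\leftrightarrow j$ replaces $a_{i\sigma(j)}$ by $a_{j\sigma(i)}=a_{i\sigma^2(j)}$, which interchanges the roles of $\omega$ and $\omega^2$ in the middle coefficients. The factor $\omega^{-k-3}\delta_{\sigma(i),j}+\omega^{-2(k+3)}\delta_{\sigma^2(i),j}$ in the scalar term must then transform correctly under $k\mapsto -s-3$ on the support of $\delta_{s,-k-3}$. I would verify this most cleanly at the level of generating series rather than in components. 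Applying $\overline{\alpha}$ to (\ref{comx_ix_j_currentform}) and (\ref{eq_comm033_gen}), written with $x^-_i(z)\mapsto\Omega_{\psi_i}(z^{-1})$, sends one to the other by the identity (\ref{2sym}), $g_{i\sigma^\ell(j),q^{-1}}(\omega^\ell t^{-1})=g_{ij,q}(t)$, which absorbs the permutation of $\omega$-powers. For (\ref{eq_comm011_gen}) one needs in addition the substitution $\delta(z)=\delta(z^{-1})$. Extracting coefficients then gives the componentwise identities.
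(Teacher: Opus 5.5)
Your plan is the paper's: define $\overline{\alpha}$ on the free algebra, note that involutivity and bijectivity are formal, and check that the defining relations are permuted. The mixed relation does go through. (\ref{Kas_commxO_r2}) at $(i,j,k,s)$ goes to (\ref{Kas_commxO_r2}) at $(j,i,-s-2,-k-2)$; the shift is $-3$ for $r=3$, not $-4$. This uses that $\gamma^{\pm1/2}$ is central, which is not among the listed relations of $\mc{K}_q$ and has to be added, as the paper tacitly does.

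The genuine gap is the step you only assert: that $\overline{\alpha}$ carries (\ref{Kas_commxx_r2}) to an instance of (\ref{Kas_commOO_r2}). For the relations as printed this is false. Both (\ref{Kas_commxx_r2}) and (\ref{Kas_commxx_r3}) are the coefficients of $\prod_s(z-\omega^s q^{b_s}w)\,x_i^-(z)x_j^-(w)=\prod_s(q^{b_s}z-\omega^s w)\,x_j^-(w)x_i^-(z)$ with $b_s=+a_{i\sigma^s(j)}$, i.e. the \emph{upper} sign in (\ref{comx_ix_j_currentform}). Since $\overline{\alpha}(x_i^-(z))=\Omega_{\psi_i}(z^{-1})$, reversing products and putting $z=u_1^{-1}$, $w=u_2^{-1}$ gives
\[
\prod_s(u_2-\omega^s q^{b_s}u_1)\,\Omega_{\psi_j}(u_2)\Omega_{\psi_i}(u_1)=\prod_s(q^{b_s}u_2-\omega^s u_1)\,\Omega_{\psi_i}(u_1)\Omega_{\psi_j}(u_2).
\]
Pull $-\omega^s q^{b_s}$ out of each factor and replace $s$ by $-s$. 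The result is (\ref{eq_comm033_gen}) exactly when $b_s=-a_{i\sigma^{-s}(j)}$. With the printed exponents you instead get (\ref{Kas_commOO_r2}), resp. (\ref{Kas_commOO_r3}), with $q$ replaced by $q^{-1}$.

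Concretely, for $r=2$ put $c_1=q^{a_{ij}}-q^{a_{i\sigma(j)}}$ and $c_2=q^{a_{ij}+a_{i\sigma(j)}}$. The image of (\ref{Kas_commxx_r2}) has coefficients $(1,-c_1,-c_2)$ on $\Omega_{\psi_j}(-s)\Omega_{\psi_i}(-k-2)$, $\Omega_{\psi_j}(-s-1)\Omega_{\psi_i}(-k-1)$, $\Omega_{\psi_j}(-s-2)\Omega_{\psi_i}(-k)$. The instance of (\ref{Kas_commOO_r2}) on the same six monomials, namely $(k,s)\to(-k-2,-s-2)$, has $(c_2,c_1,-1)$ there. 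The symmetries $a_{ij}=a_{ji}$ and $a_{i\sigma(j)}=a_{j\sigma(i)}$ that you invoke cannot repair a $q\leftrightarrow q^{-1}$ discrepancy. The paper's displayed computation makes the same unjustified middle step.

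The statement becomes correct once the $x^-$ relations carry the lower sign of (\ref{comx_ix_j_currentform}), i.e. $q^{-a_{i\sigma^s(j)}}$ instead of $q^{a_{i\sigma^s(j)}}$ in (\ref{Kas_commxx_r2}) and (\ref{Kas_commxx_r3}). That is the relation that actually holds in $\mc{N}_q^-$, and the one compatible with the action (\ref{eq_comm01_gen}). With this correction your generating-series route is the right proof, and it works uniformly for $r=2,3$. Taking $b_s=-a_{i\sigma^s(j)}$ above lands on (\ref{eq_comm033_gen}) precisely when $a_{i\sigma^{-s}(j)}=a_{i\sigma^{s}(j)}$ for all $s$. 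That condition is (\ref{2sym}) with $\ell=0$, i.e. $g_{ij,q^{-1}}(t^{-1})=g_{ij,q}(t)$. It is automatic for $r=2$. For $r=3$ it is exactly where the restriction to $I_\sigma$ is used; it fails in $D_4$ for two distinct outer nodes. So the input you need is this identity, not the symmetry of the $a_{ij}$. Extracting coefficients then yields the corrected componentwise relations, not (\ref{Kas_commxx_r2}) and (\ref{Kas_commxx_r3}) as printed.
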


\dem
    The map $\overline{\alpha}$ is clearly an involution and a bijection. So, it is enough to check that it is an anti-homomorphism. To do this, we check that the relations are preserved. When $r=2$, we get

    \begin{multline*}
        \overline{\alpha}(x_{i,k+2}^-x_{j,s}^- - (q^{a_{ij}} - q^{a_{i\sigma(j)}})x_{i,k+1}^-x_{j,s+1}^- - q^{a_{ij} + a_{i\sigma(j)}}x_{i,k}^-x_{j,s+2}^-) \\
        = \Omega_{\psi_j}(-s)\Omega_{\psi_i}(-k-2) - (q^{a_{ij}} - q^{a_{i\sigma(j)}})\Omega_{\psi_j}(-s-1)\Omega_{\psi_i}(-k-1) - q^{a_{ij} + a_{i\sigma(j)}}\Omega_{\psi_j}(-s-2)\Omega_{\psi_i}(-k)\\
        =q^{a_{ij} + a_{i\sigma(j)}}\Omega_{\psi_i}(-k-2)\Omega_{\psi_j}(-s) + (q^{a_{ij}} - q^{a_{i\sigma(j)}})\Omega_{\psi_i}(-k-1)\Omega_{\psi_j}(-s-1) - \Omega_{\psi_i}(-k)\Omega_{\psi_j}(-s-2) \\
        = \overline{\alpha}(q^{a_{ij} + a_{i\sigma(j)}}x_{j,s}^-x_{i,k+2}^- + (q^{a_{ij}} - q^{a_{i\sigma(j)}})x_{j,s+1}^-x_{i,k+1}^- - x_{j,s+2}^-x_{i,k}^-)
    \end{multline*}

    and 

    \begin{multline*}
        \overline{\alpha}(q^{a_{ij} + a_{i\sigma(j)}}\gamma^2\Omega_{\psi_i}(k)x_{j,s+2}^- + (q^{a_{ij}} - q^{a_{i\sigma(j)}})\gamma\Omega_{\psi_i}(k+1)x_{j,s+1}^- - \Omega_{\psi_i}(k+2)x_{j,s}^-) \\
        = q^{a_{ij} + a_{i\sigma(j)}}\gamma^2\Omega_{\psi_j}(-s-2)x_{i,-k}^- + (q^{a_{ij}} - q^{a_{i\sigma(j)}})\gamma\Omega_{\psi_j}(-s-1)x_{i,-k-1}^- - \Omega_{\psi_j}(-s)x_{i,-k-2}^-\\
        =(q^{a_{ij} + a_{i\sigma(j)}} + q^{a_{ij}} - q^{a_{i\sigma(j)}} - 1)\gamma^{k+2}\delta_{k,-s-2}(\delta_{i,j} + (-1)^{k+2}\delta_{\sigma(i),j})  \\
        +\gamma^2x_{i,-k}^-\Omega_{\psi_j}(-s-2) - (q^{a_{ij}} - q^{a_{i\sigma(j)}})\gamma x_{i,-k-1}^-\Omega_{\psi_j}(-s-1) - q^{a_{ij} + a_{i\sigma(j)}}x_{i,-k-2}^-\Omega_{\psi_j}(-s)\\
        =\overline{\alpha}\left( ((q^{a_{ij} + a_{i\sigma(j)}} + q^{a_{ij}} - q^{a_{i\sigma(j)}} - 1)\gamma^{k+2}\delta_{k,-s-2}(\delta_{i,j} + (-1)^{k+2}\delta_{\sigma(i),j}) \right.\\
        \left. +\gamma^2x_{j,s+2}^-\Omega_{\psi_i}(k) - (q^{a_{ij}} - q^{a_{i\sigma(j)}})\gamma x_{j,s+1}^-\Omega_{\psi_i}(k+1) - q^{a_{ij} + a_{i\sigma(j)}}x_{j,s}^-\Omega_{\psi_i}(k+2) \right)
    \end{multline*}

    and for $r=3$ we get

    \begin{multline*}
        \overline{\alpha}(x_{i,k+3}^-x_{j,s}^- - (\omega^2q^{a_{i\sigma^2(j)}} + \omega q^{a_{i\sigma(j)}} + q^{a_{ij}})x_{i,k+2}^-x_{j,s+1}^-\\
    +  (\omega^2q^{a_{ij} + a_{i\sigma^2(j)}} + \omega q^{a_{ij} + a_{i\sigma(j)}} + q^{a_{i\sigma(j)} + a_{i\sigma^2(j)}})x_{i,k+1}^-x_{j,s+2}^- - q^{a_{ij} + a_{i\sigma(j)} + a_{i\sigma^2(j)}}x_{i,k}^-x_{j,s+3}^- )\\
        = \Omega_{\psi_j}(-s)\Omega_{\psi_i}(-k-3) - (\omega^2q^{a_{i\sigma^2(j)}} + \omega q^{a_{i\sigma(j)}} + q^{a_{ij}})\Omega_{\psi_j}(-s-1)\Omega_{\psi_i}(-k-2)\\
        +  (\omega^2q^{a_{ij} + a_{i\sigma^2(j)}} + \omega q^{a_{ij} + a_{i\sigma(j)}} + q^{a_{i\sigma(j)} + a_{i\sigma^2(j)}})\Omega_{\psi_j}(-s-2)\Omega_{\psi_i}(-k-1) \\
        - q^{a_{ij} + a_{i\sigma(j)} + a_{i\sigma^2(j)}}\Omega_{\psi_j}(-s-3)\Omega_{\psi_i}(-k)\\
        =q^{a_{ij} + a_{i\sigma(j)} + a_{i\sigma^2(j)}}\Omega_{\psi_i}(-k-3)\Omega_{\psi_j}(-s) \\
        - (\omega^2q^{a_{ij} + a_{i\sigma(j)}} + \omega q^{a_{ij} + a_{i\sigma^2(j)}} + q^{a_{i\sigma(j)} + a_{i\sigma^2(j)}})\Omega_{\psi_i}(-k-2)\Omega_{\psi_j}(-s-1) \\
        + (\omega^2q^{a_{i\sigma(j)}} + \omega q^{a_{i\sigma^2(j)}} + q^{a_{ij}})\Omega_{\psi_i}(-k-1)\Omega_{\psi_j}(-s-2) - \Omega_{\psi_i}(-k)\Omega_{\psi_j}(-s-3)\\
        = \overline{\alpha}(q^{a_{ij} + a_{i\sigma(j)} + a_{i\sigma^2(j)}}x_{j,s}^-x_{i,k+3}^- - (\omega^2q^{a_{ij} + a_{i\sigma(j)}} + \omega q^{a_{ij} + a_{i\sigma^2(j)}} + q^{a_{i\sigma(j)} + a_{i\sigma^2(j)}})x_{j,s+1}^-x_{i,k+2}^-\\
        + (\omega^2q^{a_{i\sigma(j)}} + \omega q^{a_{i\sigma^2(j)}} + q^{a_{ij}})x_{j,s+2}^-x_{i,k+1}^- -x_{j,s+3}^-x_{i,k}^-)
    \end{multline*}

    and 

    \begin{multline*}
        \overline{\alpha}(q^{a_{ij} + a_{i\sigma(j)} + a_{i\sigma^2(j)}}\gamma^3\Omega_{\psi_i}(k)x_{j,s+3}^- - (\omega^2q^{a_{ij} + a_{i\sigma(j)}} + \omega q^{a_{ij} + a_{i\sigma^2(j)}} + q^{a_{i\sigma(j)} + a_{i\sigma^2(j)}})\gamma^2\Omega_{\psi_i}(k+1)x_{j,s+2}^- \\
        + (\omega^2q^{a_{i\sigma(j)}} + \omega q^{a_{i\sigma^2(j)}} + q^{a_{ij}})\gamma\Omega_{\psi_i}(k+2)x_{j,s+1}^- - \Omega_{\psi_i}(k+3)x_{j,s}^-)\\
        = q^{a_{ij} + a_{i\sigma(j)} + a_{i\sigma^2(j)}}\gamma^3\Omega_{\psi_j}(-s-3)x_{i,-k}^- - (\omega^2q^{a_{ij} + a_{i\sigma(j)}} + \omega q^{a_{ij} + a_{i\sigma^2(j)}} + q^{a_{i\sigma(j)} + a_{i\sigma^2(j)}})\gamma^2\Omega_{\psi_j}(-s-2)x_{i,-k-1}^-\\
        + (\omega^2q^{a_{i\sigma(j)}} + \omega q^{a_{i\sigma^2(j)}} + q^{a_{ij}})\gamma\Omega_{\psi_j}(-s-1)x_{i,-k-2}^- - \Omega_{\psi_j}(-s)x_{i,-k-3}^-)\\
        = \bigl(q^{a_{ij} + a_{i\sigma(j)} + a_{i\sigma^2(j)}} - \omega^2q^{a_{ij} - a_{i\sigma(j)}} - \omega q^{a_{ij} + a_{i\sigma^2(j)}} - q^{a_{i\sigma(j)} + a_{i\sigma^2(j)}}\\
   + \omega^2q^{a_{i\sigma(j)}} + \omega q^{a_{i\sigma^2(j)}} + q^{a_{ij}} - 1 \bigl)\gamma^{k+3}\delta_{s,-k-3}(\delta_{ij} + \omega^{-p-3}\delta_{\sigma(i),j} + \omega^{-2(k+3)}\delta_{\sigma^2(i),j})\\
   + \gamma^3 x_{i,-k}^-\Omega_{\psi_j}(-s-3) - (\omega^2q^{a_{i\sigma^2(j)}} + \omega q^{a_{i\sigma(j)}} + q^{a_{ij}})\gamma^2x_{i,-k-1}^-\Omega_{\psi_j}(-s-2)\\ 
    +  (\omega^2q^{a_{ij} + a_{i\sigma^2(j)}} + \omega q^{a_{ij} + a_{i\sigma(j)}} + q^{a_{i\sigma(j)} + a_{i\sigma^2(j)}})\gamma x_{i,-k-2}^-\Omega_{\psi_j}(-s-1) - q^{a_{ij} + a_{i\sigma(j)} + a_{i\sigma^2(j)}}x_{i,-k-3}^-\Omega_{\psi_j}(-s)\\
        =\overline{\alpha}\Bigl( \bigl(q^{a_{ij} + a_{i\sigma(j)} + a_{i\sigma^2(j)}} - \omega^2q^{a_{ij} - a_{i\sigma(j)}} - \omega q^{a_{ij} + a_{i\sigma^2(j)}} - q^{a_{i\sigma(j)} + a_{i\sigma^2(j)}} \\
        + \omega^2q^{a_{i\sigma(j)}} + \omega q^{a_{i\sigma^2(j)}} + q^{a_{ij}} - 1 \bigl)\gamma^{k+3}\delta_{s,-k-3}(\delta_{ij} + \omega^{-k-3}\delta_{\sigma(i),j} + \omega^{-2(k+3)}\delta_{\sigma^2(i),j})\\
        + \gamma^3 x_{j,s+3}^-\Omega_{\psi_i}(k) - (\omega^2q^{a_{i\sigma^2(j)}} + \omega q^{a_{i\sigma(j)}} + q^{a_{ij}})\gamma^2x_{j,s+2}^-\Omega_{\psi_i}(k+1)\\
         +  (\omega^2q^{a_{ij} + a_{i\sigma^2(j)}} + \omega q^{a_{ij} + a_{i\sigma(j)}} + q^{a_{i\sigma(j)} + a_{i\sigma^2(j)}})\gamma x_{j,s+1}^-\Omega_{\psi_i}(k+2) - q^{a_{ij} + a_{i\sigma(j)} + a_{i\sigma^2(j)}}x_{j,s}^-\Omega_{\psi_i}(k+3) \Bigl)
    \end{multline*}

\findem

\begin{lem}
    $\mc{N}_q^-$ is a left $\mc{K}_q$-module and $$ \mc{N}_q^- \cong \mc{K}_q/ \sum_{i\in I_\sigma}\sum_{k\in \Z} \mc{K}_q\Omega_{\psi_i}(k) $$
\end{lem}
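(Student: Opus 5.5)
We first make $\mc{N}_q^-$ into a $\mc{K}_q$-module. The generators $x_{i,p}^-$ act by left multiplication, $\gamma^{\pm 1/2}$ acts by multiplication, and $\Omega_{\psi_j}(k)$ acts by the coefficient operators of $\Omega_{\psi_j}(u)$ constructed in the previous section. We need these operators to satisfy the defining relations of $\mc{K}_q$. Each relation is the component form of a generating-series identity we already have:
\begin{itemize}
\item Relations (\ref{Kas_commxx_r2}) and (\ref{Kas_commxx_r3}) hold in $\mc{N}_q^-\subset U_q$, since they are the modes of (\ref{comx_ix_j_currentform}).
\item Relations (\ref{Kas_commOO_r2}) and (\ref{Kas_commOO_r3}) come from Proposition \ref{Omega_commrealtions} in the polynomial form (\ref{eq_comm033_gen}).
\item Relations (\ref{Kas_commxO_r2}) and (\ref{Kas_commxO_r3}) come from Proposition \ref{Omega_x_commrealtions} in the cleared-denominator form (\ref{eq_comm011_gen}).
\end{itemize}
To pass from series to components, we multiply out the products over $s\in\Z/r\Z$ and take the coefficient of $u^{-k}v^{-s}$. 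The only delicate point is the delta-function term, which we evaluate using $\delta(z)f(z)=\delta(z)f(1)$-type identities. Everything is well defined because, for each fixed monomial, $\Omega_{\psi_i}(k)$ vanishes for all but finitely many relevant $k$, as shown at the end of the previous section. This gives a homomorphism $\mc{K}_q\to \mathrm{End}(\mc{N}_q^-)$.

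Next we consider the map $\pi:\mc{K}_q\to\mc{N}_q^-$, $X\mapsto X\cdot 1$. It is surjective, since $\mc{N}_q^-$ is generated by the $x_{i,p}^-$ and $\gamma^{\pm1/2}$ applied to $1$. Because $\Omega_{\psi_i}(k)(1)=0$, the left ideal $J=\sum_{i,k}\mc{K}_q\Omega_{\psi_i}(k)$ is contained in $\ker\pi$. So $\pi$ induces a surjection $\mc{K}_q/J\to\mc{N}_q^-$.

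For injectivity, we show that $\mc{K}_q$ is spanned by elements of the form $P\cdot\Omega$, where $P$ is a word in the $x^-$'s and $\gamma^{\pm1/2}$, and $\Omega$ is a word in the $\Omega_{\psi}$'s. Relation (\ref{action_omega_on_x}) is the component form of (\ref{eq_comm01_gen}). It lets us move any $\Omega_{\psi_i}(k)$ to the right past $x_{j,m}^-$, at the cost of a scalar term times $\gamma^k$ plus terms that are again of the form $x^-\,\Omega$. The $\gamma$'s are central. By induction on the number of $\Omega$'s standing to the left of some $x^-$, every element of $\mc{K}_q$ is congruent modulo $J$ to an element $P$ of the subalgebra generated by $x^-$ and $\gamma^{\pm1/2}$. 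That subalgebra maps onto $\mc{N}_q^-$, and on it $\pi$ is literally the identity inclusion into $U_q$.

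It then remains to check that no element of this subalgebra that is zero in $\mc{N}_q^-$ survives in $\mc{K}_q/J$. Equivalently, the subalgebra of $\mc{K}_q$ generated by the $x^-$ and $\gamma^{\pm1/2}$ has no relations beyond those holding in $\mc{N}_q^-$. \textbf{This is the step I expect to be the main obstacle.} The defining relations of $\mc{K}_q$ include only the quadratic relations (\ref{Kas_commxx_r2}) and (\ref{Kas_commxx_r3}), while $\mc{N}_q^-$ also satisfies the Serre-type relations. I would handle this as the untwisted treatment in \cite{CFM01} does: interpret $\mc{N}_q^-$ as the algebra presented by the relations imposed in the definition, or else show that the Serre relations already lie in $J$. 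For the latter, the plan is to apply $\Omega_{\psi}$'s to a Serre element $S$ and check that $\Omega_{\psi_i}(k)S$ lies in $J$ by the same reordering. A vector killed by all $\Omega_{\psi_i}(k)$ in positive degree must then be zero, by a nondegeneracy argument that gives positive degree zero in $\mc{K}_q/J$.
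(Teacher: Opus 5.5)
\textbf{Verdict: genuine gap.} Your outline is the paper's:
\begin{itemize}
\item the module structure comes from Propositions~\ref{Omega_x_commrealtions} and~\ref{Omega_commrealtions};
\item the surjection $\eta:\mathcal K_q/J\to\mathcal N_q^-$ comes from $\Omega_{\psi_i}(k)1=0$;
\item $\mathcal K_q/J$ is reduced to the subalgebra $C$ generated by the $x^-_{i,p}$ and $\gamma^{\pm1/2}$ (the paper's $\mu$);
\item then injectivity is proved.
\end{itemize}
You leave the injectivity step open, and it is the only step with real content, so the proposal is not yet a proof.

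The paper closes this step by asserting a homomorphism $\nu:\mathcal N_q^-\to C$ ``induced'' by the generators and the quadratic relations (\ref{Kas_commxx_r2}), (\ref{Kas_commxx_r3}), with $\nu\circ\eta\circ\mu=\mathrm{id}_C$. As you rightly suspect, such a $\nu$ exists only if every relation of $\mathcal N_q^-$, in particular the Serre relations of the Drinfeld realization, already holds in $C$. (That is the direction you need; your ``equivalently'' sentence states the converse, which is automatic.) So the paper tacitly takes your option (a). Neither of your options repairs the step as stated:
\begin{itemize}
\item Option (a) changes the object. $\mathcal N_q^-$ is the subalgebra of $U_q$, and the $\Omega_{\psi_i}(k)$ were obtained from $[x_i^+(u),\,\cdot\,]$ inside $U_q$. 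On an abstractly presented algebra they would have to be constructed anew. The rest of the section, e.g.\ Lemma~\ref{lem:Omega-invariant}, concerns the subalgebra of $U_q$, not that presented algebra.
\item Option (b) rests on the claim that an $\Omega$-invariant vector of positive degree in $\mathcal K_q/J$ vanishes. That is essentially the statement to be proved, and there is no reason for it to hold with only quadratic relations imposed. For comparison, take Kashiwara's boson algebra with the Serre relations dropped: its quotient by $\sum_i Be_i'$ is the free algebra on the $f_i$, in which the quantum Serre element is nonzero yet killed by every $e_i'$.
\end{itemize}
The missing input has two parts. First, the component forms of the Serre relations must hold in $C$, for instance by imposing them among the defining relations of $\mathcal K_q$. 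Second, you need a presentation of $\mathcal N_q^-$ by the quadratic and Serre relations. With both, $\nu$ is well defined and $\nu\circ\eta\circ\mu=\mathrm{id}_C$ gives injectivity.

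A smaller point concerns your reduction to $C$. It uses (\ref{action_omega_on_x}), which contains an infinite sum and is an identity of operators on $\mathcal N_q^-$, not a relation of $\mathcal K_q$. In $\mathcal K_q$ only the cleared-denominator relations (\ref{Kas_commxO_r2}), (\ref{Kas_commxO_r3}) are available. Modulo $J$ these give only a linear recurrence among the products $\Omega_{\psi_i}(k)x^-_{j,m}$ with $k+m$ fixed. That recurrence does not by itself reduce a single such product to a scalar modulo $J$. So surjectivity of $C\to\mathcal K_q/J$, which the paper also just asserts, needs a separate argument. One option is to adopt (\ref{action_omega_on_x}) as a relation in a suitable completion of $\mathcal K_q$.
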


\dem Thanks to Propositions \ref{Omega_x_commrealtions} and \ref{Omega_commrealtions}, $\mc{N}_q^-$ is a left $\mc{K}_q$-module. It is clear that the algebra $\mc{N}_q^-$ is a quotient of the Kashiwara algebra, and since $\Omega_{\psi_i}(k)$ annihilates 1 for all $k$ and $i\in I_\sigma$, there is a $\mc{K}_q$-epimorphism that sends 1 to 1 of the form $  \displaystyle \eta: \mc{K}_q/ \sum_{i\in I_\sigma}\sum_{k\in \Z} \mc{K}_q\Omega_{\psi_i}(k) \twoheadrightarrow \mc{N}_q^-$.

Consider now the subalgebra $C$ of $\mc{K}_q$ generated by $x^-_{i,p}
$ and $\gamma^{\pm 1/2}$ for all $p\in Z$. Then, there is a $\mc{K}_q$-epimorphism $\displaystyle \mu : C \twoheadrightarrow \mc{K}_q/ \sum_{i\in I_\sigma}\sum_{k\in \Z} \mc{K}_q\Omega_{\psi_i}(k)$. Clearly, $\eta \circ \mu$ is an epimorphism, and since $\mc{N}_q^-$ is generated by $x^-_{i,p}$ and $\gamma^{\pm 1/2}$ for all $p\in Z$ and $i\in I_\sigma$, and the relations (\ref{Kas_commxx_r2}) for $r=2$ and (\ref{Kas_commxx_r3}) for $r=3$ hold in it, there is an induced homomorphism $\nu : \mc{N}_q^- \to C$ such that $\nu\circ(\eta\circ\mu)$ is the identity. So, $\eta \circ\mu$ is an isomorphism, and hence $\eta$ is also an isomorphism.

\findem

\begin{prop}\label{Form_on_Nq}
    There exists a unique symmetric form $(\quad , \quad)$ defined on $\mc{N}_q^-$ satisfying $$ (x^-_{i,p}a,b) = (a,\Omega_{\psi_i}(-p)b), \quad \quad (1,1)=1 $$
    for $p\in \Z$, $i\in I_\sigma$ and $a,b\in \mc{N}_q^-$.
\end{prop}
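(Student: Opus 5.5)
The plan is to follow the Kashiwara–Verma pattern used in \cite{CFM01}: build the form from the $\mc{K}_q$-module structure of $\mc{N}_q^-$ and the anti-automorphism $\overline{\alpha}$. By the preceding Lemma, $\mc{N}_q^- \cong \mc{K}_q/\sum_{i,k}\mc{K}_q\Omega_{\psi_i}(k)$. Applying $\overline{\alpha}$ turns this into a right module $\mc{N}_q^{-,r} := \mc{K}_q/\sum_{i,p}x^-_{i,p}\mc{K}_q$. The natural pairing $\mc{N}_q^{-,r}\times \mc{N}_q^-\to \mc{K}_q/(\sum \mc{K}_q\Omega + \sum x^-\mc{K}_q)$ then reduces the problem to identifying this double quotient with $\C(q^{1/2})[\gamma^{\pm1/2}]$ (and, after specialization of the central element, with the ground field). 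Concretely, I define
\[
(a,b) := \text{the constant term of } \overline{\alpha}(a)\cdot b,
\]
where $\overline{\alpha}(a)$ is a polynomial in the $\Omega_{\psi_i}(k)$ acting on $b$ via the $\mc{K}_q$-module structure. The result lies in $\mc{N}_q^-$ and has degree zero in the $x^-$, so it is a scalar; this is well defined because the action on $\mc{N}_q^-$ descends from $\mc{K}_q$.

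The key steps are as follows.
\begin{enumerate}
\item Check the adjunction. For $a=x^-_{i,p}a'$ we have $\overline{\alpha}(a)=\overline{\alpha}(a')\Omega_{\psi_i}(-p)$, since $\overline{\alpha}$ is an anti-automorphism. Hence $(x^-_{i,p}a',b) = (a',\Omega_{\psi_i}(-p)b)$ directly, and $(1,1)=1$.
\item Prove uniqueness by induction on the $x^-$-degree of $a$. Every $a$ is a combination of $\gamma^{\ell/2}$ times monomials $x^-_{i_1,p_1}\cdots x^-_{i_m,p_m}$. The adjunction reduces $(a,b)$ to $(\gamma^{\ell/2}, \Omega\cdots\Omega\, b)$. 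This equals $(1,\cdot)$ of a scalar multiple of $1$, using weight reasons and the fact that $(1,x^-b')=(\Omega 1, b')$-type arguments force $(1,b)=0$ whenever $b$ has positive degree. The $\Omega$'s lower the degree by one, because they act by the derivation-like formula (\ref{action_omega_on_x}) and kill $1$. So the value is forced.
\item Prove symmetry, $(a,b)=(b,a)$, by induction on total degree. Write $b=x^-_{j,s}b'$, so that $(a,b)$ is computed via $\Omega_{\psi_j}(-s)$ acting on $a$ in the reversed pairing. Comparing the two computations amounts to the statement that $\overline{\alpha}$ is an involution intertwining the actions. For monomials of equal degree, one expands $\Omega_{\psi_i}(-p)(x^-_{j,s}b')$ via (\ref{action_omega_on_x}) into a delta term plus terms $x^-\Omega$. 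Induction then applies to both sides, and the terms match because the relations (\ref{Kas_commxO_r2}) and (\ref{Kas_commxO_r3}) are $\overline{\alpha}$-stable, as shown in the Lemma on $\overline{\alpha}$.
\end{enumerate}

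I expect the main obstacle to be well-definedness together with symmetry in the presence of the root-of-unity factors. The delta term in (\ref{action_omega_on_x}) carries $\omega^{-sk}\gamma^k$, and one must check that the scalar obtained by contracting $x^-_{i,p}$ against $x^-_{j,s}$ is symmetric in $(i,p)\leftrightarrow(j,s)$. This requires $\sigma^s(i)=j\iff\sigma^{-s}(j)=i$ together with $\omega^{-sk}$ matching $\omega^{s\cdot(-k)}$ under $p=-k$, and also needs the symmetry (\ref{1sym}) and (\ref{2sym}) of $g_{ij,q}$ for the higher terms. I would first verify the degree-one and degree-two cases explicitly for $r=2,3$, and then run the induction. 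The fact that the $\Omega$'s annihilate $1$ (Lemma above) guarantees that only finitely many terms survive at each stage.
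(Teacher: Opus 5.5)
Your construction is the paper's in different language. The constant-term functional is the paper's $\beta_0$, and $(a,b)=\beta_0\bigl(\overline{\alpha}(a)\,b\bigr)$ is exactly $\overline{\beta}(a)(b)$ for the $\overline{\alpha}$-twisted $\mc{K}_q$-module structure on $\Hom(\mc{N}_q^-,\C(q^{1/2}))$. So existence and the adjunction check coincide with the paper's. In both versions, well-definedness rests on the Lemma $\mc{N}_q^-\cong\mc{K}_q/\sum\mc{K}_q\Omega_{\psi_i}(k)$ together with the fact that $\overline{\alpha}$ sends this left ideal to $\sum x^-_{i,-k}\mc{K}_q$, whose image in $\mc{N}_q^-$ is killed by $\beta_0$. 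That is the real reason, rather than ``the action descends from $\mc{K}_q$''.

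The genuine difference is symmetry. The paper settles it in one line by asserting that $(a,b)'=(b,a)$ satisfies the same conditions. But that assertion is precisely the second-slot adjunction $(b,x^-_{i,p}a)=(\Omega_{\psi_i}(-p)b,a)$, which is what has to be proved. Your Kashiwara-style induction on total degree actually supplies it: expand $\Omega_{\psi_i}(-p)x^-_{j,s}b'$ by (\ref{action_omega_on_x}), apply the hypothesis to pairs of lower total degree, and match the $g$-terms via (\ref{1sym}), using the $\overline{\alpha}$-stability of (\ref{Kas_commxO_r2}) and (\ref{Kas_commxO_r3}). Your uniqueness step also rightly uses symmetry to get $(1,b)=0$ for $b$ of positive degree. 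The paper's ``generated by $x^-$'' remark tacitly needs this as well.

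There are three details to get right when you write it out.
\begin{enumerate}
\item The delta terms do not match for the reason you give. The two contractions produce $\sum_{s:\sigma^s(i)=j}\omega^{sp}$ and $\sum_{s:\sigma^s(j)=i}\omega^{sp}$, i.e.\ $\omega^{sp}$ against $\omega^{-sp}$, and these differ for $r=3$ when $i\neq j$ share an orbit. They agree only because $i,j\in I_\sigma$ lie in distinct $\sigma$-orbits. Hence $\sigma^s(i)=j$ forces $i=j$, and $\{s:\sigma^s(i)=i\}$ is a subgroup of $\Z/r\Z$, stable under $s\mapsto -s$.
\item Read the second term of (\ref{action_omega_on_x}) as $\sum_{t\geq0}g_{ij,q^{-1}}(t)\gamma^t x^-_{j,m+t}\Omega_{\psi_i}(k-t)$; the printed index $m+k$ is a typo.
\item The stated conditions do not determine $(\gamma^{\ell/2},1)$, so uniqueness needs a convention on $\gamma$. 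The paper's proof implicitly takes $\beta_0(\gamma^{\pm1/2})=1$; alternatively, work with a $\C(q^{1/2})[\gamma^{\pm1/2}]$-valued form, as you partly anticipate.
\end{enumerate}
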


\dem Using the anti-automorphism $\overline{\alpha}$, we can endow the set $M= \Hom(\mc{N}_q^-, \C(q^{1/2}))$ with the structure of a $\mc{K}_q$-module defined by 
$$ (x^-_{i,p}f)(a) = f(\Omega_{\psi_i}(-p)a), \quad (\Omega_{\psi_i}(k)f)(a) = f(x^-_{i,-k}a), \quad (\gamma^{\pm 1/2}f)(a) = f(\gamma^{\pm 1/2}a)  $$

where $p,m\in \Z$ and $f\in M$. Let $\beta_0\in M$ be the element such that $\beta_0(1) = 1$ and $\displaystyle \beta_0\left(\sum_{i\in I_\sigma}\sum_{p\in \Z} x_{i,p}^-\mc{N}_q^-\right)=0$. Then $\Omega_{\psi_i}(m)\beta_0 = 0$ for all $m\in \Z$, and we get an induced $\mc{K}_q$-module homomorphism $\displaystyle \mc{K}_q/ \sum_{i\in I_\sigma}\sum_{k\in \Z} \mc{K}_q\Omega_{\psi_i}(k) \to M$.\\

Composing this homomorphism with the inclusion $\mc{N}_q^- \to \mc{K}_q$ and the canonical projection $\displaystyle \mc{K}_q \to \mc{K}_q/ \sum_{i\in I_\sigma}\sum_{k\in \Z} \mc{K}_q\Omega_{\psi_i}(k)$, we get an induced $\mc{N}_q$-module homomorphism $\overline{\beta} : \mc{N}_q^- \to M$. Since $\overline{\beta}(1)=\beta_0$, we get that $\overline{\beta}$ is non-zero.\\

Define $(\quad,\quad):\mc{N}_q^- \times \mc{N}_q^- \to \C(q^{1/2})$ by 
$$ (a,b) = (\overline{\beta}(a))(b) $$

This form satisfies $(1,1) = (\overline{\beta}(1))(1) = \beta_0(1)=1$, $(\gamma^{\pm 1/2},1)=(\overline{\beta}(\gamma^{\pm 1/2}))(1) = 1$, and $(x^-_{i,p}a,b) = (\overline{\beta}(x^-_{i,p}a))(b) = (x^-_{i,p}\overline{\beta}(a))(b) =  \overline{\beta}(a)(\Omega_{\psi_i}(-p)b)= (a,\Omega_{\psi_i}(-p)b)$. Since $\mc{N}_q^-$ is generated by $x^-_p$ and $\gamma^{\pm 1/2}$, this form is the unique form satisfying these conditions. Moreover, since the form $(a,b)'=(b,a)$ satisfies the same conditions, the form is symmetric.

\findem

Finally, we show that $\mc{N}_q^-$ is a simple left $\mc{K}_q$-module.

\begin{lem}\label{lem:Omega-invariant}
Let $P\in\mc N_q^-$. Suppose that
\[
\Omega_{\psi_i}(k)P=0
\qquad
\text{for all }i\in I_\sigma,\ k\in\mathbb Z.
\]
Then $P\in\mathbb C1$.
\end{lem}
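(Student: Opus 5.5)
We argue by a grading argument combined with the nondegeneracy-type identity coming from Proposition~\ref{Omega_x_commrealtions}, in the same spirit as the untwisted case of \cite{CFM01}. First, $\mc{N}_q^-$ is graded by the number of factors $x^-_{j,k}$ (equivalently by the weight in $-Q_+$, since each $x^-_{j,k}$ lowers the weight by $\alpha_j$ and $\gamma^{\pm1/2}$ is central of weight $0$). By the component formula (\ref{action_omega_on_x}) each $\Omega_{\psi_i}(k)$ is homogeneous of degree $-1$: it sends the weight space of weight $-\eta$ to that of weight $-\eta+\alpha_i$. Hence the hypothesis $\Omega_{\psi_i}(k)P=0$ for all $i,k$ passes to each homogeneous component of $P$, and it suffices to treat $P$ homogeneous of weight $-\eta$ with $\eta\neq 0$ and show $P=0$. (For $\eta=0$ the weight space is spanned by powers of $\gamma^{\pm1/2}$; here we must also use that $\gamma^{1/2}$ acts on the relevant module as a scalar, or else interpret $\mathbb C1$ as $\C(q^{1/2})[\gamma^{\pm1/2}]\cdot 1$, matching the convention of the preceding lemma.)

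For homogeneous $P\neq 0$ of weight $-\eta$, $\eta\neq0$, we use the form of Proposition~\ref{Form_on_Nq}. Write $P$ as a combination of monomials $x^-_{j_1,p_1}\cdots x^-_{j_m,p_m}$ with $m\ge1$. For any $b\in\mc{N}_q^-$ of weight $-\eta$, expanding $b=\sum x^-_{j,p}b'$, we get $(b,P)=\sum (b',\Omega_{\psi_j}(-p)P)=0$; by symmetry $(P,b)=0$, so $P$ lies in the radical of the form restricted to its weight space. Thus the lemma reduces to the non-degeneracy of $(\ ,\ )$ on each weight space of $\mc{N}_q^-$.

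To prove non-degeneracy we would specialize: via the $\A$-form philosophy of Section 4, at $q\to1$ (and $\gamma\to1$, $\omega$ fixed) the relations of Proposition~\ref{Omega_x_commrealtions} become $\Omega_{\psi_i}(k)x^-_{j,m}=\sum_s\delta_{\sigma^s(i),j}\delta_{m,-k}\omega^{-sk}+x^-_{j,m}\Omega_{\psi_i}(k)$ and $\mc{N}_q^-$ degenerates to a (twisted, $\sigma$-equivariant) polynomial algebra in the independent generators $x^-_{i,p}$, $i\in I_\sigma$, on which the $\Omega_{\psi_i}(-p)$ act as the partial derivatives $\partial/\partial x^-_{i,p}$. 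There a polynomial killed by all partial derivatives is constant, i.e. the classical form is the standard nondegenerate Fock form. A weight space of $\mc{N}_q^-$ is infinite-dimensional, but it is further graded by the total loop degree $\sum p_l$ (the $D$-grading), and each bigraded piece is finite-dimensional, of the same dimension as its classical limit by the PBW results of Section 2 and Proposition~\ref{limitsrootPBW}. The Gram determinant of each bigraded piece is then a rational function of $q^{1/2}$ whose value at $q=1$ is nonzero, hence it is nonzero, giving non-degeneracy and thus $P=0$.

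The main obstacle is this last step: justifying that the bigraded pieces are finite-dimensional with dimensions preserved under specialization (the $\mc{N}_q^-$ current generators are not directly the PBW root vectors of Theorem~\ref{TwistPBW}, and the $D$-graded pieces of a fixed weight need bounded loop degree to be finite), and that the Gram matrix entries lie in a localization of $\C[q^{\pm1/2}]$ regular at $q=1$. If this specialization route is too delicate, the fallback is a direct induction on $m$: choose the lexicographically extremal leading index $(j_1,p_1)$ appearing in $P$ and show, using (\ref{action_omega_on_x}) and the triangularity of $g_{ij,q^{-1}}(t)=1+O(t)$, that $\Omega_{\psi_{j_1}}(-p_1)P$ has a nonzero leading term, contradicting the hypothesis.
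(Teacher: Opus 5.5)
The reduction to the form is correct, but it is only a reformulation. The lemma is equivalent to non-degeneracy of $(\ ,\ )$, and the paper derives non-degeneracy from this very lemma (via simplicity of $\mc N_q^-$). So everything rests on your independent proof of non-degeneracy by specialization at $q=1$, and that step fails for two reasons.

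First, the limit of $\mc N_q^-$ is not a polynomial algebra. At $q=1$, (\ref{comx_ix_j_currentform}) only gives $\prod_{s}(z-\omega^s w)\,[x_i^-(z),x_j^-(w)]=0$, which permits commutators supported on $z^r=w^r$. The limit is the enveloping algebra of the $\sigma$-twisted loop algebra of the negative nilpotent subalgebra $\mathfrak n^-$ of $\g$, which is noncommutative. For example, in type $A_2^{(2)}$, under the standard identification $x^-_{1,k}\mapsto(f_1+(-1)^kf_2)\otimes t^k$, the element $x^-_{1,0}x^-_{1,1}-x^-_{1,1}x^-_{1,0}$ specializes to a nonzero multiple of $[f_1,f_2]\otimes t$. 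Since $g_{ij,q}\to 1$, the limiting $\Omega_{\psi_i}(k)$ are derivations taking scalar values on the generators, so they annihilate every commutator. The limiting form is therefore \emph{degenerate}, and your Gram determinants vanish at $q=1$ rather than being nonzero there. The same happens for Kashiwara's form on $U_q^-(\mathfrak{sl}_3)$: $e_1'$ and $e_2'$ send $f_1f_2-f_2f_1$ to multiples of $1-q$.

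Second, the pieces of fixed weight and fixed loop degree are infinite-dimensional. The products $x^-_{i,N}x^-_{j,n-N}$, for infinitely many $N$, are linearly independent and lie in one piece, so there is no Gram determinant to take.

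Your one-line fallback is the route the paper actually takes. For homogeneous $P$ of weight $-\beta\neq0$ it picks a maximal monomial $M=x^-_{j,m}w$ and applies $\Omega_{\psi_i}(-m)$ with $\sigma^{s_0}(i)=j$. It then asserts that the contraction term $c\,\omega^{s_0m}\gamma^{-m}w$ is cancelled neither by the shifted terms nor by the images of the other monomials of $P$. That non-cancellation is the whole content of the lemma, and it does not follow from ``$g_{ij,q^{-1}}(t)=1+O(t)$''. In fact $g_{ij,q^{-1}}(0)=q^{A}$ with $A=\sum_s a_{i\sigma^s(j)}$. For $i\neq j$ in $I_\sigma$, (\ref{eq_comm01_gen}) gives $\Omega_{\psi_i}(-a)(x^-_{j,b}x^-_{i,a})=q^{A}\,\Omega_{\psi_i}(-a)(x^-_{i,a}x^-_{j,b})$, both being multiples of $\gamma^{-a}x^-_{j,b}$. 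Hence $\Omega_{\psi_i}(-a)$ kills $x^-_{i,a}x^-_{j,b}-q^{-A}x^-_{j,b}x^-_{i,a}$, and symmetrically $\Omega_{\psi_j}(-b)$ kills $x^-_{j,b}x^-_{i,a}-q^{-A}x^-_{i,a}x^-_{j,b}$. When $A\neq0$ these elements are nonzero for suitable $a,b$, since at $q=1$ they become nonzero commutators. So whichever letter you declare leading, the single operator attached to it can vanish on a nonzero element. Other operators do detect these elements: for instance $\Omega_{\psi_j}(-b)$ sends the first one to a nonzero multiple of $(q^{A}-q^{-A})\gamma^{-b}x^-_{i,a}$.

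To turn this route into a proof you must fix an honest basis of $\mc N_q^-$, since monomials in the $x^-_{i,p}$ are not independent. You then need an order for which some $\Omega_{\psi_i}(k)$, not necessarily the first-letter one, provably has an uncancelled leading term.

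Finally, your remark that $\gamma^{\pm1/2}$ is also annihilated by all $\Omega_{\psi_i}(k)$ is correct, so the conclusion should read $P\in\C(q^{1/2})[\gamma^{\pm1/2}]$; the paper passes over this point.
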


\dem
We first decompose $P$ with respect to the standard root-lattice
grading
\[
\mc N_q^-=
\bigoplus_{\beta\in Q_+}
(\mc N_q^-)_{-\beta}.
\]
Since the operators $\Omega_{\psi_i}(k)$ are homogeneous with
respect to this grading, it is enough to consider a nonzero
homogeneous element
\[
P\in(\mc N_q^-)_{-\beta}.
\]

Suppose that $\beta\neq0$. Let $d$ be the maximal number of
generators $x^-_{i,m}$ occurring in a monomial appearing in $P$.
Write
\[
P=P_d+P_{<d},
\]
where $P_d$ is the component consisting of monomials containing
exactly $d$ generators.

Fix a PBW ordering of the monomials and choose in $P_d$ a monomial
\[
M=x^-_{j,m}w
\]
which is maximal with respect to this ordering. Let $c\neq0$ be its
coefficient.

Choose $i\in I_\sigma$ and $s_0\in\{0,\ldots,r-1\}$ such that
\[
\sigma^{s_0}(i)=j.
\]
Applying $\Omega_{\psi_i}(-m)$ and using
\eqref{action_omega_on_x}, the contraction term gives
\[
c\,\omega^{s_0m}\gamma^{-m}w.
\]

The remaining contribution coming from the action on the first
factor is
\[
c\sum_{t\geq0}
g_{ij,q^{-1}}(t)
x^-_{j,0}
\Omega_{\psi_i}(-m-t)(w)\gamma^t.
\]
Since $\Omega_{\psi_i}(-m-t)$ acts on $w$, every nonzero term in
the sum contains one fewer generator in $w$, followed by the
additional generator $x^-_{j,0}$. Thus it has the same number of
generators as $w$.

We now compare these terms with the leading monomial $w$ using the
PBW ordering. By the choice of $M$ as the maximal monomial and by
the formulas (\ref{Kas_commxx_r2}) and (\ref{Kas_commxx_r3}), the term $\Omega_{\psi_i}(-m-t)w$ remains to be of degree $d-1$ and  strictly smaller than $w$. Likewise, the action on all monomials in $P_d$ which are strictly
smaller than $M$ produces only monomials strictly smaller than $w$.

Consequently, the coefficient of $w$ in
$\Omega_{\psi_i}(-m)P$ is
\[
c\,\omega^{s_0m}\gamma^{-m}\neq0.
\]
Hence
\[
\Omega_{\psi_i}(-m)P\neq0,
\]
contrary to the hypothesis. Therefore $\beta=0$, and hence $P\in\mathbb C1$.
\findem

\begin{thm}
The algebra $\mc{N}_q^-$ is a simple left $\mc{K}_q$-module.
\end{thm}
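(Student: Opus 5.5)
The plan is the standard argument for modules of Kashiwara type, built on Lemma \ref{lem:Omega-invariant}. Fix a nonzero $\mc{K}_q$-submodule $W\subseteq\mc{N}_q^-$. I will show that $1\in W$. Granting this, $W\supseteq\mc{K}_q\cdot 1$. Since $\mc{N}_q^-$ is generated as an algebra by the $x^-_{i,p}$ and $\gamma^{\pm1/2}$, and these act on $\mc{N}_q^-$ by left multiplication, we have $\mc{K}_q\cdot 1=\mc{N}_q^-$. This matches the isomorphism $\mc{N}_q^-\cong\mc{K}_q/\sum_{i,k}\mc{K}_q\Omega_{\psi_i}(k)$ proved above. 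Hence $W=\mc{N}_q^-$.

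To produce $1$, take $0\neq P\in W$. Since $W$ is stable under $\gamma^{\pm1/2}$ and the $\Omega_{\psi_i}(k)$ are homogeneous, I would pass to a homogeneous component. One way is to use the $Q_+$-grading, which is detected by the conjugation action of $K_i$ in $U_q$. Alternatively, I can simply run the argument on the top-degree part. Now order elements by the number $d(P)$ of generators $x^-$ in their monomials, and induct on $d(P)$.

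If $d(P)=0$, then $P$ is a nonzero Laurent polynomial in $\gamma^{1/2}$, and multiplying by a suitable inverse power of $\gamma^{1/2}$ gives a scalar multiple of $1$. For the argument to close I must treat the $\gamma$-part as an invertible scalar. This works if I first reduce to $P$ homogeneous in the $\gamma$-degree, using the invertible action of $\gamma^{1/2}$. The weight space of degree $0$ in $x^-$ is then spanned by powers of $\gamma^{1/2}$, and each of these generates $1$.

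If $d(P)>0$, then $P\notin\C 1$. By Lemma \ref{lem:Omega-invariant}, some $\Omega_{\psi_i}(k)P\neq 0$. This element lies in $W$, and by \eqref{action_omega_on_x} its number of generators is strictly smaller. Each term either contracts one $x^-$ (the delta term) or moves $\Omega$ further right with the same count, and $\Omega\cdot 1=0$. So every application of $\Omega_{\psi_i}(k)$ strictly lowers $d$, and induction gives a nonzero element of $W$ with $d=0$, hence $1\in W$.

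The main obstacle is making sure that the strict decrease of the $x^-$-count is compatible with the $\gamma$ factors. Elements of $\mc{N}_q^-$ carry powers $\gamma^{s}$, which \eqref{action_omega_on_x} introduces, and Lemma \ref{lem:Omega-invariant} as stated concludes $P\in\C1$ rather than $P\in\C[\gamma^{\pm1/2}]$. I would first check that the lemma's leading-term argument really applies to $\gamma$-coefficients, so that an $\Omega$-annihilated element has no $x^-$ at all. Then I would finish the $d=0$ case by the invertibility of $\gamma^{\pm1/2}$ in $\mc{K}_q$, as described above.
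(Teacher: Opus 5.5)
Your strategy is the same as the paper's. The paper takes a nonzero element of the submodule of minimal $x^-$-degree, notes that every $\Omega_{\psi_i}(k)$ lowers this degree so all of them kill it, and applies Lemma~\ref{lem:Omega-invariant}. Your induction on $d(P)$ is that descent done step by step. Your $d>0$ step is fine, because it only uses what the lemma's proof actually shows: an $\Omega$-annihilated element has $x^-$-degree $0$. (Projecting onto $Q_+$-components by conjugating with $K_i$ is not available, since $K_i\notin\mc{K}_q$, but you do not need it.)

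The gap is the $d=0$ step. You cannot reduce to a $P$ that is homogeneous in $\gamma^{1/2}$. Multiplication by $\gamma^{\pm1/2}$ shifts the $\gamma$-degree; it does not separate components. In fact every generator of $\mc{K}_q$ acts on $\mc{N}_q^-$ by a $\C(q^{1/2})[\gamma^{\pm1/2}]$-linear map. This holds because $\gamma^{1/2}$ is central in $U_q$ and $\Omega_{\psi_i}(k)$ is defined through $[x_i^+(u),\,\cdot\,]$, so $\Omega_{\psi_i}(k)(\gamma^{l/2}a)=\gamma^{l/2}\Omega_{\psi_i}(k)a$. Take $f=1+\gamma^{1/2}$. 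Then $\mc{K}_q\cdot f=f\,\mc{N}_q^-$, and its $x^-$-degree-$0$ part is $f\,\C(q^{1/2})[\gamma^{\pm1/2}]$. This does not contain $1$, because the powers $\gamma^{l/2}$ are linearly independent in $U_q^0$, so $f$ is a non-unit of a genuine Laurent polynomial ring. Hence $f\,\mc{N}_q^-$ is a nonzero proper submodule. So the step ``a nonzero degree-$0$ element generates $1$'' is not merely unproved; it is false when $\gamma^{\pm1/2}$ is a genuine generator.

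The paper's proof does not get around this either. It passes over the issue because Lemma~\ref{lem:Omega-invariant} is stated with conclusion $P\in\C1$, which $P=\gamma^{1/2}$ already violates. What is needed is to treat $\gamma^{1/2}$ as a scalar, in one of two ways:
\begin{itemize}
\item specialize it, e.g.\ $\gamma^{1/2}=1$, the level-zero situation of the reduced modules; this is consistent with the normalization $(\gamma^{\pm1/2},1)=1$ used in Proposition~\ref{Form_on_Nq};
\item extend scalars to $\C(q^{1/2},\gamma^{1/2})$.
\end{itemize}
In either setting the degree-$0$ part of $\mc{N}_q^-$ is the ground field, your $d=0$ step becomes trivial, and your argument coincides with the paper's.
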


\begin{proof}
Let $0\neq M\subseteq\mc{N}_q^-$ be a $\mc{K}_q$-submodule.
Choose a nonzero element $P\in M$ of minimal degree with respect to
the grading
\[
\deg x^-_{i,p}=1,\qquad
\deg\gamma^{\pm1/2}=0.
\]
Since each operator $\Omega_{\psi_i}(k)$ lowers this degree by one,
minimality implies
\[
\Omega_{\psi_i}(k)P=0
\qquad
\text{for all }i\in I_\sigma,\ k\in\mathbb Z.
\]
By Lemma~\ref{lem:Omega-invariant}, $P$ is a nonzero scalar
multiple of $1$. Hence $1\in M$. Since $\mc{N}_q^-$ is generated by
the elements $x^-_{i,p}$ and $\gamma^{\pm1/2}$, it follows that
\[
M=\mc{N}_q^-.
\]
Thus $\mc{N}_q^-$ is simple as a left $\mc{K}_q$-module.
\end{proof}
\begin{cor}
    The form $(\quad,\quad)$ defined in Proposition \ref{Form_on_Nq} is non-degenerate.
\end{cor}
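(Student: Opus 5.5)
The plan is to show that the radical
\[
R=\{a\in\mc N_q^- \mid (a,b)=0 \text{ for all } b\in\mc N_q^-\}
\]
is a $\mc K_q$-submodule of $\mc N_q^-$ that does not contain $1$. Simplicity of $\mc N_q^-$ (the preceding Theorem) then forces $R=0$, which is non-degeneracy.

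Closure of $R$ under the generators is read off from the defining property of the form in Proposition \ref{Form_on_Nq} together with its symmetry.

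First, let $a\in R$ and $b\in\mc N_q^-$. Then
\[
(\Omega_{\psi_i}(k)a,b)=(b,\Omega_{\psi_i}(k)a)=(x^-_{i,-k}b,a)=(a,x^-_{i,-k}b)=0,
\]
so $\Omega_{\psi_i}(k)a\in R$.

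Second, $(x^-_{i,p}a,b)=(a,\Omega_{\psi_i}(-p)b)=0$, so $x^-_{i,p}a\in R$.

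Third, for $\gamma^{\pm1/2}$ I use that it is central in $\mc N_q^-$ and acts compatibly with the form. Concretely, the module $M$ in the proof of Proposition \ref{Form_on_Nq} gives $(\gamma^{\pm1/2}a,b)=(a,\gamma^{\pm1/2}b)$ via $\overline{\beta}$, hence $\gamma^{\pm1/2}a\in R$. If that compatibility needs justification, I would note that $\overline{\alpha}$ fixes $\gamma^{\pm1/2}$ and that $\overline{\beta}$ is a module map.

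Since $\mc K_q$ is generated by these elements, $R$ is a $\mc K_q$-submodule. Because $(1,1)=1\neq 0$, we have $1\notin R$, so $R\neq\mc N_q^-$. Simplicity then gives $R=0$, and the form is non-degenerate.

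The only delicate point is the $\gamma$-compatibility. Strictly, $\gamma^{\pm1/2}$ is not invertible in the ground field, so one should check that the module structure on $\Hom(\mc N_q^-,\C(q^{1/2}))$ is the one used to define the form. Alternatively one can avoid it: the $x^-$ and $\Omega$ closures already show that $R$ is stable under a generating set of $\mc K_q$ modulo the central $\gamma$-action. In the worst case I would argue weight-space by weight-space, using the homogeneity of the form with respect to the $Q_+$-grading.
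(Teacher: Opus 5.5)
Your proof is correct and follows the paper's argument: the radical is a $\mc{K}_q$-submodule of $\mc{N}_q^-$ that does not contain $1$ because $(1,1)=1$, so simplicity of $\mc{N}_q^-$ forces it to be zero. Your explicit closure checks fill in what the paper leaves implicit: the $\Omega$ step uses symmetry plus adjointness, and the $\gamma^{\pm1/2}$ step uses that $\overline{\beta}$ is a module map with $\overline{\alpha}(\gamma^{\pm1/2})=\gamma^{\pm1/2}$. The fallback remarks in your last paragraph are therefore not needed.
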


\dem The radical of the form is a $\mc{K}_q$-submodule of $\mc{N}_q^-$, and since $(1,1)=1$, it must be zero.
\findem

\bigskip
\begin{center}
ACKNOWLEDGMENT
\end{center}
\smallskip
 V. Futorny is partially supported by NSF of China (12350710787 and 12350710178)


\begin{thebibliography}{X}

\bibitem{AFM} Arias, J. C., Futorny, V. \& Misra, K. C. Crystal bases for reduced imaginary Verma modules of untwisted quantum affine algebras. {\em J. Algebra}. \textbf{655} pp. 3-28 (2024).

\bibitem{B01}Beck, J. Braid group action and quantum affine algebras. {\em Comm. Math. Phys.}. \textbf{165}, 555-568 (1994).

\bibitem{B02} Beck, J. Convex bases of PBW type for quantum affine algebras, {\em Comm. Math. Phys.} {\bf 165}, 193-199 (1994).

\bibitem{BK} Beck, J. \& Kac, V. Finite-dimensional representations of quantum affine algebras at roots of unity. {\em J. Amer. Math. Soc.}. \textbf{9}, 391-423 (1996).

\bibitem{BKMe} Benkart, G., Kang, S. \& Melville, D. Quantized enveloping algebras for Borcherds superalgebras. {\em Trans. Amer. Math. Soc.}. \textbf{350}, 3297-3319 (1998).

\bibitem{CP}Chari, V. \& Pressley, A. A guide to quantum groups. (Cambridge University Press, Cambridge,1995), Corrected reprint of the 1994 original.

\bibitem{CFM01}Cox, B., Futorny, V. \& Misra, K. Imaginary {V}erma modules and {K}ashiwara algebras for  {$U_q(\hat{\mathfrak{g}})$}. {\em J. Algebra}. \textbf{424} pp. 390-415 (2015).

\bibitem{D} Damiani, I. The R-matrix for (twisted) affine quantum algebras. {\em Representations And Quantizations (Shanghai, 1998)}. pp. 89-144 (2000).

\bibitem{D01}Drinfel'd, V. A new realization of Yangians and of quantum affine algebras. {\em Dokl. Akad. Nauk SSSR}. \textbf{296}, 13-17 (1987)

\bibitem{Fut}Futorny, V. Imaginary Verma modules for affine Lie algebras. {\em Canad. Math. Bull.}. \textbf{37}, 213-218 (1994)

\bibitem{FGM} Futorny, V., Grishkov A., \& Melville, D. Verma-type modules for quantum affine Lie algebras, Algebr. Represent. Theory {\bf 8}, 99--125 (2005).

\bibitem{J} Jing, N. On Drinfeld realization of quantum affine algebras. {\em The Monster And Lie Algebras (Columbus, OH, 1996)}. \textbf{7} pp. 195-206 (1998).

\bibitem{JM} Jing, N. \& Misra, K. Vertex operators for twisted quantum affine algebras. {\em Trans. Amer. Math. Soc.}. \textbf{351}, 1663-1690 (1999).

\bibitem{L01} Lusztig, G. Introduction to quantum groups. (Birkh\"auser/Springer, New York,2010), Reprint of the 1994 edition.

\bibitem{P} Papi, P. Convex orderings in affine root systems. {\em J. Algebra}. \textbf{172}, 613-623 (1995).



\end{thebibliography}
\end{document}